\documentclass{amsart}%
\usepackage{amssymb}
\usepackage{amsfonts}
\usepackage{amsmath}
\usepackage{graphicx}%
\setcounter{MaxMatrixCols}{30}
%TCIDATA{OutputFilter=latex2.dll}
%TCIDATA{Version=5.00.0.2606}
%TCIDATA{CSTFile=amsartci.cst}
%TCIDATA{Created=Friday, April 17, 2015 16:31:20}
%TCIDATA{LastRevised=Thursday, October 06, 2016 15:14:27}
%TCIDATA{<META NAME="GraphicsSave" CONTENT="32">}
%TCIDATA{<META NAME="SaveForMode" CONTENT="1">}
%TCIDATA{BibliographyScheme=Manual}
%TCIDATA{<META NAME="DocumentShell" CONTENT="Articles\SW\AMS Journal Article">}
\newtheorem{theorem}{Theorem}
\theoremstyle{plain}

\newtheorem{conjecture}{Conjecture}
\newtheorem{corollary}{Corollary}

\newtheorem{definition}{Definition}
\newtheorem{example}{Example}

\newtheorem{lemma}{Lemma}

\newtheorem{proposition}{Proposition}

\numberwithin{equation}{section}
\begin{document}
\title[$EIP$ on $S(n,m)$]{The Edge-Isoperimetric Problem on Sierpinski Graphs}
\author{L. H. Harper}
\address{Department of Mathematics\\
University of California-Riverside}
\email{harper@math.ucr.edu}
\date{December 9, 20015}
\subjclass[2000]{Primary 05C38, 15A15; Secondary 05A15, 15A18}
\keywords{Sierpinski graphs, fractal, self-similar.}

\begin{abstract}
Some families of graphs, such as the $n$-cubes and Sierpinski gaskets are
self-similar. In this paper we show how such recursive structure can be used
systematically to prove isoperimetric theorems.

\end{abstract}
\maketitle

\section{Introduction}

\subsection{Background \& Motivation}

The Sierpinski gasket is a topoligical curiosity (fractal and self-similar).
According to Wikipedia, "It is named after the Polish mathematician Wac\l aw
Sierpi\'{n}ski but appeared as a decorative pattern many centuries prior to
the work of Sierpi\'{n}ski". To construct it recursively,

\begin{description}
\item[0] Start with an equilateral triangle of side $1$ (the interior as well
as the boundary).

\item[n+1] After round $n\geq0$, there remain $3^{n}$ congruent equilateral
triangles of side $1/2^{n}$. In each triangle subdivide each edge at its
midpoint and connect the midpoints to make $4$ triangles of side $1/2^{n+1}.$
Remove the interior of the central one, leaving its boundary. The
\textit{Sierpinski gasket }is the limit (set) of this process as
$n\rightarrow\infty$ (see Wikipedia).
\end{description}

The \textit{Sierpinski Gasket Graph, }$SG_{n}$, is the boundary of the set
remaining after $n$ rounds of the \textit{Sierpinski gasket }construction
(above). The vertices of $SG_{n}$ are the vertices of the constituent
triangles and its edges are the edges of those triangles. $SG_{n}$ may be
defined recursively as follows: $SG_{0}=K_{3}$, the complete graph on 3
vertices. If $SG_{n}$ has been defined for $n\geq0$, then $SG_{n+1}$ may be
constructed from 3 copies of $SG_{n}$, each copy sharing one corner vertex
with each of the other two copies. In most papers on the Sierpinski gasket
graph it is denoted by $S_{n}$. We use $SG_{n}$ because in our context $S$ is
already used for several other structures.

$SP_{n}$, a 3-dimensional analog of $SG_{n}$, is proposed in \cite{W-R-R-S} as
a connection architecture for multiprocessing computers. They call it the
"Sierpinski gasket pyramid network" but it is also known as the Sierpinski
sponge. The best-known multiprocessor architecture is $Q_{n}$, the graph of
the $n$-dimensional cube. The properties of $Q_{n}$ relevant to its employment
in computer architecture have been well studied. \cite{W-R-R-S} begins the
analysis of $SP_{n}$ by showing that (among other things)

\begin{enumerate}
\item $SP_{n}$ has diameter $2^{n-1}$.

\item $SP_{n}$ has chromatic number $4$.

\item $SP_{n}$ is hamiltonian.
\end{enumerate}

In their conclusion the authors propose studying $SP_{n}$ for its "message
routing and broadcasting" properties. This paper is following up on that
suggestion.\textbf{\ }The Edge-Isoperimetric Problem (EIP) (see \cite{Har04})
is of interest for connection graphs of multiprocessing computers because it
has implictions for message routing and broadcasting. Other authors
(\cite{H-K-M-P}) had previously proposed graphs related to $SG_{n}$ \&
$SP_{n}$ for computer architecture but did not consider their EIP.

\subsection{Definitions \& Examples}

\subsubsection{Graphs}

\begin{definition}
An \textit{ordinary} \textit{graph, }$G=\left(  V,E\right)  $ consists of a
set $V$, of \textit{vertices} and a set $E\subseteq\binom{V}{2}=\left\{
\left\{  v,w\right\}  :v,w\in V,v\neq w\right\}  ,$, of pairs of vertices
called \textit{edges}.
\end{definition}

\begin{example}
\bigskip$K_{n}$, \textit{the complete graph on }$n$ vertices has $V_{K_{n}%
}=\left\{  0,1,2,...,n-1\right\}  $ and $E_{K_{n}}=\binom{V_{K_{n}}}{2} $.
\end{example}

\begin{example}
The (disjunctive) product, $K_{m}\times K_{m}\times...\times K_{m}=K_{m}^{n} $
is called the Hamming graph. $V_{K_{n}^{n}}=\left\{  0,1,2,...,n-1\right\}
^{n}$. Two vertices ($n$-tuples of vertices of $K_{m}$) have an edge between
them if they differ in exactly one coordinate (\textit{i.e. }are at Hamming
distance $1$). Note that $K_{2}^{n}=Q_{n}$, the graph of the $n$-dimensionsal cube.
\end{example}

\subsubsection{The Edge-Isoperimetric Problem}

The Edge-Isoperimetric Problem (EIP) is a combinatorial analog of the
classical isoperimetric problem: Given a graph, $G=\left(  V,E\right)  $ and
$S\subseteq V$,
\[
\Theta\left(  S\right)  =\left\{  \left\{  v,w\right\}  \in E:v\in S\text{
}\&\text{ }w\notin S\right\}
\]
is called the \textit{edge-boundary of }$S$. Then the $EIP$ is to calculate
$\left\vert \Theta\right\vert \left(  G;\ell\right)  =\min\left\{  \left\vert
\Theta\left(  S\right)  \right\vert :S\subseteq V_{G}\&\left\vert S\right\vert
=\ell\right\}  $ for every integer $\ell$, $0\leq\ell\leq\left\vert
V\right\vert $, and identify sets that achieve the minimum. The function
$\left\vert \Theta\right\vert \left(  G;\ell\right)  $ is called the
(edge-)isoperimetric profile of $G$.

\begin{example}
For $K_{m}$, \textit{the complete graph on }$m$ vertices, any $S\subseteq
V_{K_{m}}$ with $\left\vert S\right\vert =\ell$ has $\left\vert \Theta\left(
S\right)  \right\vert =\ell\left(  m-\ell\right)  $. Thus every $\ell$-set is
a solution of the EIP for $K_{m}$. Also, its isoperimetric profile is
$\left\vert \Theta\right\vert \left(  K_{m};\ell\right)  =\ell\left(
m-\ell\right)  $.
\end{example}

\begin{example}
Initial $\mathbf{\ell}$-segments of $V_{K_{m}^{n}}$ in Lexicographic order,
\[
\left\{  0^{n},0^{n-1}1,...,\mathbf{\ell}_{1}\ell_{2}...\ell_{m}\right\}
\]
where $\ell=1+\sum_{i=1}^{m}\ell_{i}m^{n-i}$, are solutions of the EIP on
$K_{m}^{n}$ (proved for $m=2$ by the author in 1962 and for $m>2$ by John
Lindsay in 1963). Ching Guu \cite{Guu} pointed out that if we divide the
isoperimetric profile of the $n$-cube, $Q_{n}=K_{2}^{n}$ by $2^{n}$ we have%
\[
\left\vert \Theta\right\vert \left(  Q_{n};\ell\right)  /2^{n}=T(\ell
/2^{n})\text{,}%
\]
$T:\left[  0,1\right]  \rightarrow\left[  0,1\right]  $ being the celebrated
Takagi function (see Lagarias's survey \cite{Lag})
\end{example}

The property of having a numbering, $\eta:V\rightarrow\left\{
1,2,...,\left\vert V\right\vert \right\}  $, 1-1 \& onto, whose initial $\ell
$-segments, $\eta^{-1}\left(  \left\{  1,2,...,\ell\right\}  \right)  $, are
solutions of the EIP) is called \textit{nested solutions.}

\begin{example}
The classical isoperimetric problem in the plane has nested solutions,
concentric discs. $\eta:\mathbb{R}^{2}\mathbb{\rightarrow R}^{+}$ is defined
by $\eta\left(  x,y\right)  =\pi\left(  x^{2}+y^{2}\right)  $. For every
$a\geq0$, $\eta^{-1}\left(  \left[  0,a\right]  \right)  $ is a disc of area
$a$ and radius $r=\sqrt{\frac{a}{\pi}}$ centered at $\left(  0,0\right)  $.
The length of the boundary of the disc is $\lambda=2\pi r=2\sqrt{\pi a}.$ This
function, $\lambda\left(  a\right)  $, giving the minimum length of the
boundary of any set, $S\subseteq\mathbb{R}^{2}$, of area $a$, is the
isoperimetric profile of $\mathbb{R}^{2}$ (wrt the Euclidean metric).
\end{example}

\section{Results on $SG_{n}$}

\subsection{$SG_{1}$ and $SG_{2}$ have Nested Solutions for $EIP$}

For $n=1$ the result is trivial since $SG_{1}=K_{3}$ all numberings are
equivalent under symmetry and all $\ell$-sets achieve min$\left\{  \left\vert
\Theta\left(  S\right)  \right\vert :S\subseteq V_{K_{3}},\left\vert
S\right\vert =\ell\right\}  $.

For $n=2$ we apply stabilization to simplify the problem (See \cite{Har04},
Chapter 3): Figure 1 shows a diagram of $SG_{2}$ with basic reflections
\textbf{R}$_{0}$, \textbf{R}$_{1}$.%
\[%
%TCIMACRO{\FRAME{itbpFU}{2.8928in}{2.3722in}{0in}{\Qcb{Figure 1-$SG_2$ with
%basic reflections $\QTR{bf}{R}_0$, $\QTR{bf}{R}_1$}}{}{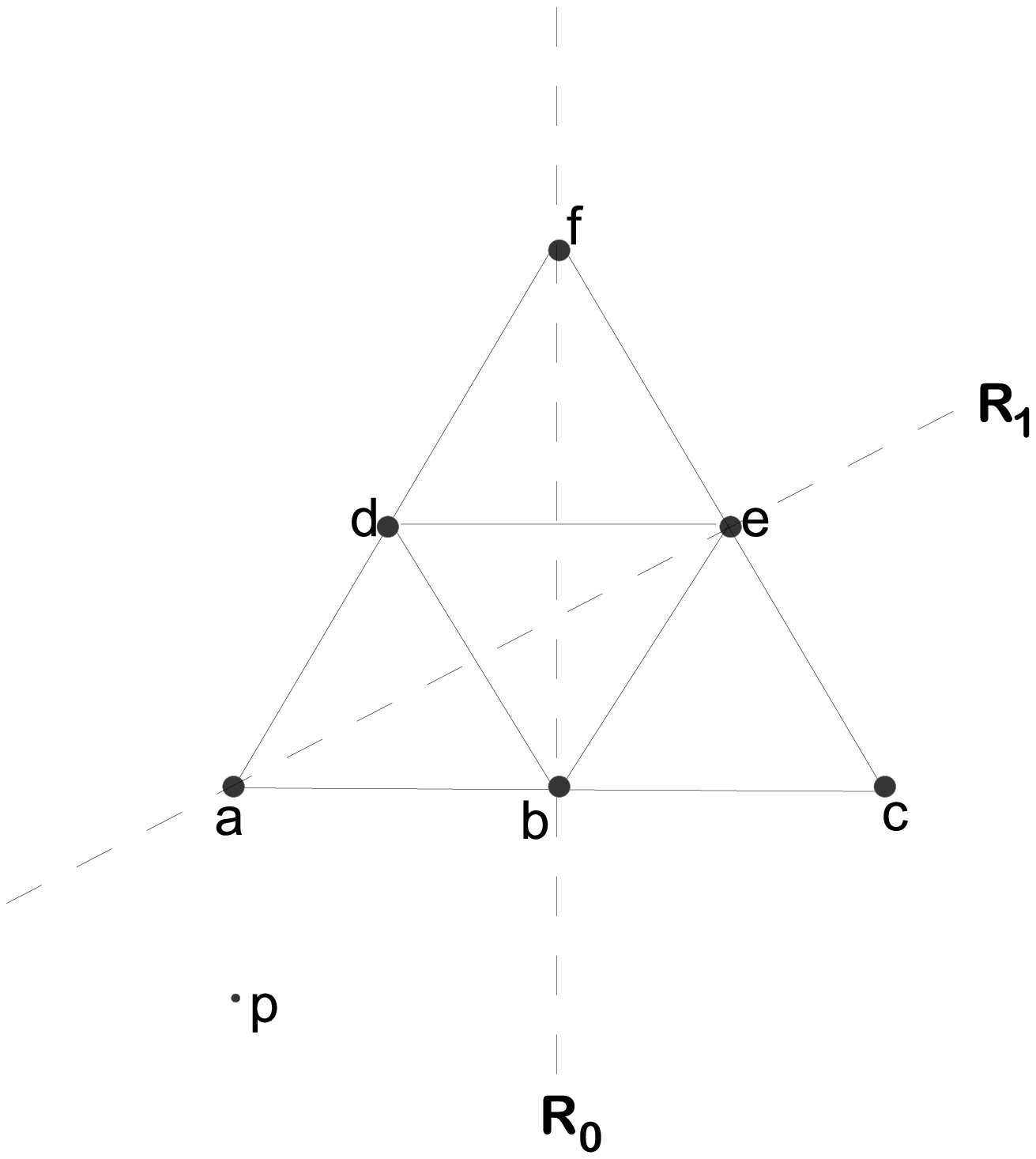}%
%{\special{ language "Scientific Word";  type "GRAPHIC";
%maintain-aspect-ratio TRUE;  display "USEDEF";  valid_file "F";
%width 2.8928in;  height 2.3722in;  depth 0in;  original-width 8.4968in;
%original-height 11.0056in;  cropleft "0.0472";  croptop "0.7550";
%cropright "0.9521";  cropbottom "0.1834";
%filename 'figure1.eps';file-properties "XNPEU";}}}%
%BeginExpansion
{\parbox[b]{2.8928in}{\begin{center}
\includegraphics[
trim=0.401049in 2.018427in 0.406997in 2.696372in,
height=2.3722in,
width=2.8928in
]%
{figure1.eps}%
\\
Figure 1-$SG_2$ with basic reflections $\textbf{R}_0$, $\textbf{R}_1$
\end{center}}}%
%EndExpansion
\]

Vertices $a,b$ lie in the fundamental chamber (the sextant containing the
point, p). These are the minimal elements of the components of \ the
stabilization-order, $\mathcal{S}$-$\mathcal{O}\left(  S(2,3)\right)  $.
Coxeter theory tells us that $\mathcal{S}$-$\mathcal{O}\left(  S(n,m)\right)
$ may be constructed recursively from its minimal elements, extending each
component from rank $r$ to rank $r+1$ by applying the adjacent transpositions,
$i\left(  i+1\right)  $ to $v$ (in rank $r)$ iff $v_{j_{0}}=i$, where
$j_{0}=\min\left\{  j:v_{j}=i\text{ or }i+1\right\}  $. The resulting vector
$v^{\prime}$ ($v$ with entries $i$ and $i+1$ transposed) is then in rank
$r+1$. The Hasse diagram of $\mathcal{S}$-$\mathcal{O}\left(  S(2,3)\right)  $
is in Figure 2.%
\[%
%TCIMACRO{\FRAME{itbpFU}{2.5425in}{2.0219in}{0in}{\Qcb{Figure 2-The
%stabilization order of $SG_2$}}{}{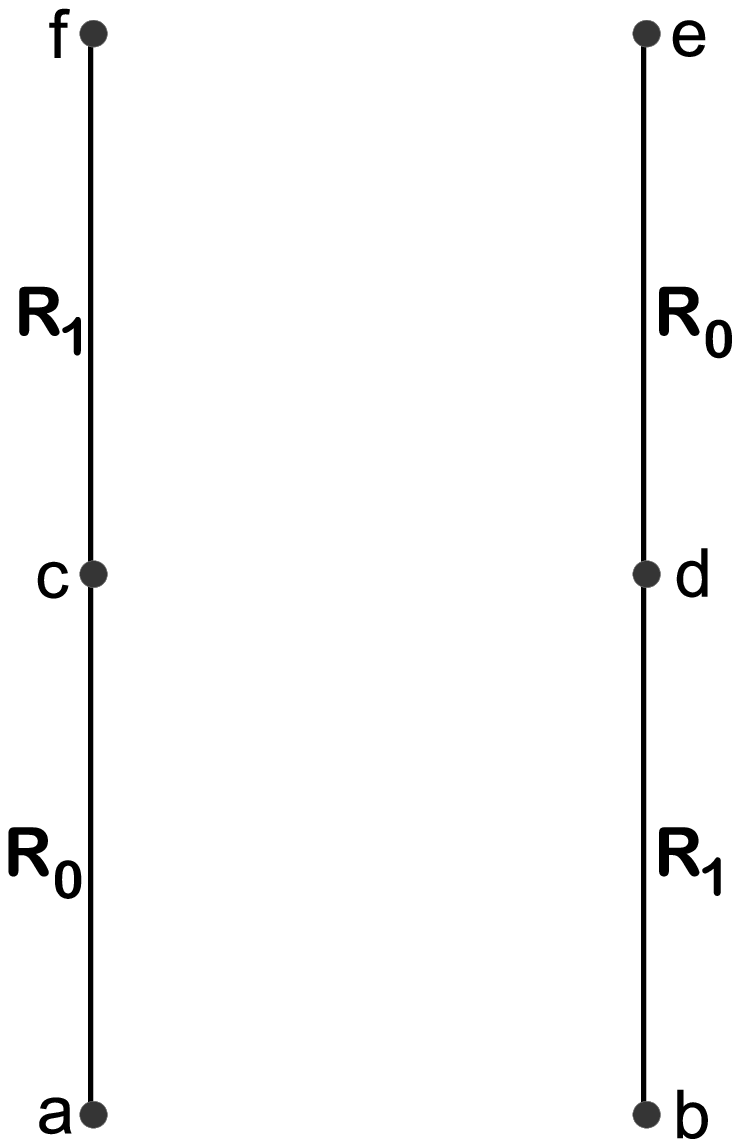}%
%{\special{ language "Scientific Word";  type "GRAPHIC";
%maintain-aspect-ratio TRUE;  display "USEDEF";  valid_file "F";
%width 2.5425in;  height 2.0219in;  depth 0in;  original-width 8.4968in;
%original-height 11.0056in;  cropleft "0.1262";  croptop "0.7550";
%cropright "0.9201";  cropbottom "0.2691";
%filename 'figure2.eps';file-properties "XNPEU";}}}%
%BeginExpansion
{\parbox[b]{2.5425in}{\begin{center}
\includegraphics[
trim=1.072296in 2.961607in 0.678895in 2.696372in,
height=2.0219in,
width=2.5425in
]%
{figure2.eps}%
\\
Figure 2-The stabilization order of $SG_2$
\end{center}}}%
%EndExpansion
\]

An \textit{ideal}, $\iota$, of a partially ordered set (poset), $\mathcal{P}$,
is a subset of the poset that is downwardly closed. \textit{I.e}. if $x\leq y$
\& $y\in\iota$ then $x\in\iota$.The \textit{ideal transform, }$\mathfrak{I}%
(\mathcal{P})$, of a poset, $\mathcal{P}$, is the set of all ideals of
$\mathcal{P}$, partially ordered by $\subseteq$. The \textit{derived network
}of a $StOp$-order is the Hasse diagram of \ the ideal transform of the
$StOp$-order, with weight $\left\vert \Theta\left(  S\right)  \right\vert $
for each ideal, $S$. The derived network of the stabilization-order of
$SG_{2}$ is shown in Figure 3.%
\[%
%TCIMACRO{\FRAME{itbpFU}{2.8919in}{2.674in}{0in}{\Qcb{Figure 3-The derived
%network of $\QTR{cal}{S}$-$\QTR{cal}{O}\left(  SG_2\right)  $}}{}%
%{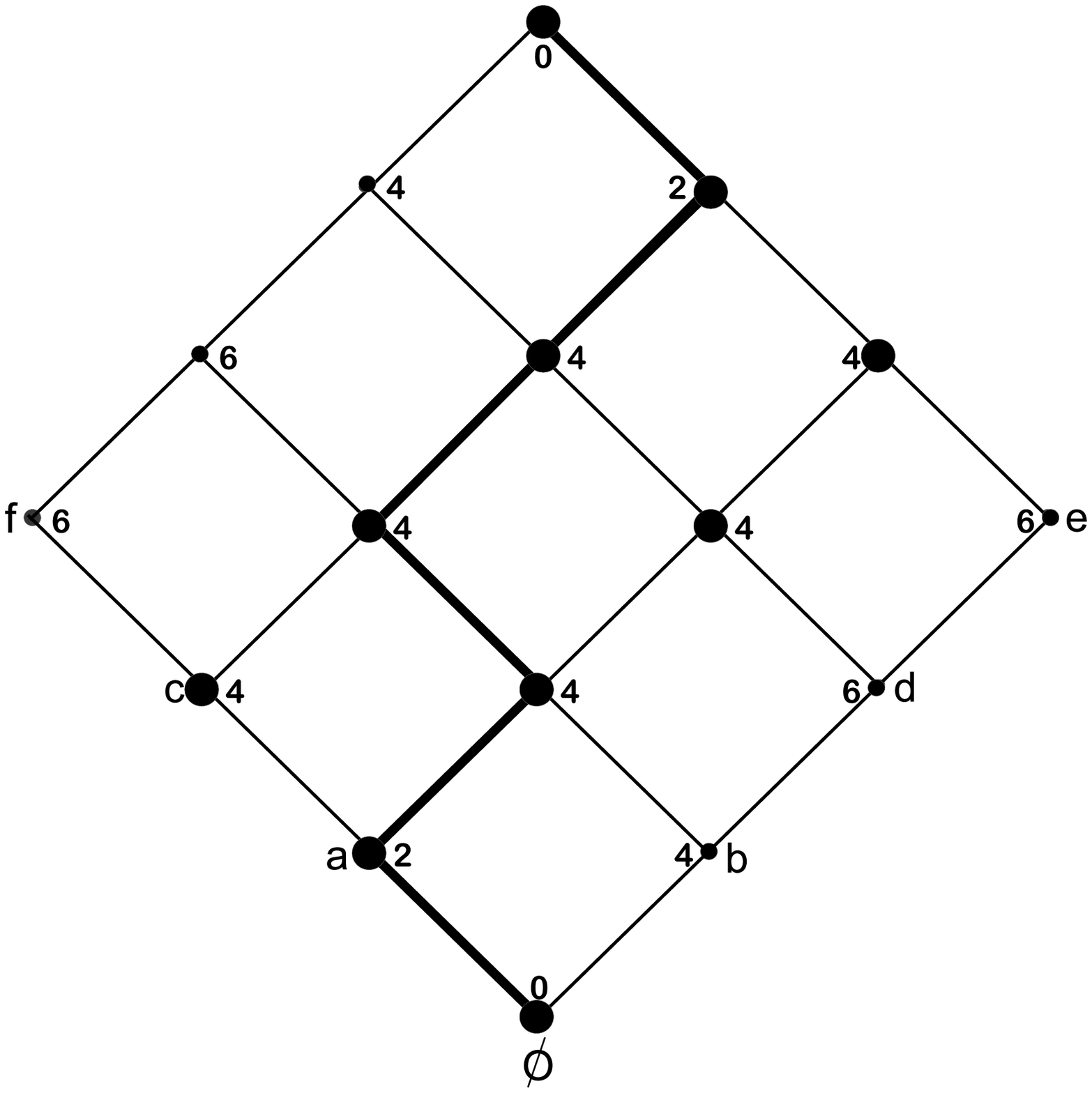}{\special{ language "Scientific Word";  type "GRAPHIC";
%maintain-aspect-ratio TRUE;  display "USEDEF";  valid_file "F";
%width 2.8919in;  height 2.674in;  depth 0in;  original-width 8.4968in;
%original-height 11.0056in;  cropleft "0.0634";  croptop "0.8162";
%cropright "0.9679";  cropbottom "0.1712";
%filename 'figure3.eps';file-properties "XNPEU";}}}%
%BeginExpansion
{\parbox[b]{2.8919in}{\begin{center}
\includegraphics[
trim=0.538697in 1.884159in 0.272747in 2.022829in,
height=2.674in,
width=2.8919in
]%
{figure3.eps}%
\\
Figure 3-The derived network of $\mathcal{S}$-$\mathcal{O}\left(  SG_2\right)
$
\end{center}}}%
%EndExpansion
\]

The large vertices represent solutions of the EIP for $SG_{2}$. The darkened
edges, tracing a path from $\emptyset$ to $\left\{  a,b,c,d,e,f\right\}  $
through solution sets, shows that $SG_{2}$ has nested solutions,%
\[
\emptyset\subset\left\{  a\right\}  \subset\left\{  a,b\right\}
\subset\left\{  a,b,c\right\}  \subset\left\{  a,b,c,d\right\}  \subset
\left\{  a,b,c,d,e\right\}  \subset\left\{  a,b,c,d,e,f\right\}  \text{.}%
\]
.

Indra Rajasingh and collaborators has shown that if $\emptyset\varsubsetneq
S\varsubsetneq V_{SG_{n}}$ then $\left\vert \Theta\left(  S\right)
\right\vert \geq2$ and this bound is sharp (achieved by the subgraphs of
$SG_{n}$ isomorphic to $SG_{n^{\prime}}$) if $\left\vert S\right\vert
=\left\vert SG_{n^{\prime}}\right\vert =\left(  3^{n^{\prime}}+3\right)  /2$,
$0\leq n^{\prime}<n$.

\subsection{$SG_{3}$ does not have Nested Solutions}

\subsubsection{A Necessary Condition}

In order for a graph, $G$, to have nested solutions for $EIP$, initial
segments of the numbering must be sequentially optimal. \textit{I.e.} the
additional vertex in each successive initial segment must minimize its
marginal contribution to edge-boundary. This observation gives us an easy way
to generate candidates for an optimal numbering. It also leads to a necessary
condition for nested solutions because the derived network (See \cite{Har04},
p. 24) of $G$ is self-dual (by complementation). The dual of a sequentially
optimal $s$-$t$ path in the derived network must also be sequentially optimal
so we can start at both ends and meet in the middle. If no sequentially
optimal paths meet in the middle then $G$ cannot have nested solutions.

The intuition about the $EIP$ on $SG_{n\text{ }}$is that the Sierpinski gasket
subgraph, $SG_{m\text{ }}$for $m<n$, has greater edge-density than $SG_{n},$
so once a numbering includes one vertex of $SG_{m\text{ }}$it should continue
numbering in that $SG_{m\text{ }}$until it is completely numbered.
Unfortunately that strategy contains the seeds of its own destruction and
$SG_{3\text{ }}$fails the above test: $SG_{2\text{ }}$is the unique (up to
isomorphism) sequential minimizer of size $6$. Its complement is then the
unique dual sequential minimizer of size $15-6=9$. However, since $SG_{2\text{
}}$has a common vertex with every other copy of itself in $SG_{3\text{ }},$
$SG_{3}-SG_{2\text{ }}$ cannot contain a copy of $SG_{2\text{ }}$and there is
no way sequentially optimal paths can meet in the middle. Another way to say
the same thing is that the function $\left\vert \Theta\right\vert \left(
G;\ell\right)  =\min\left\{  \left\vert \Theta\left(  S\right)  \right\vert
:S\subseteq V_{G},\left\vert S\right\vert =\ell\right\}  $ is symmetric about
$n/2$ (since $\left\vert \Theta\left(  S\right)  \right\vert =\left\vert
\Theta\left(  V_{G}-S\right)  \right\vert $). Therefore any numbering, $\mu$,
for which $\left\vert \Theta\left(  \mu^{-1}\left(  \boldsymbol{\ell}\right)
\right)  \right\vert $ is not symmetric about $n/2$ cannot give nested
solution for $EIP$ (even if it is sequentially optimal).

However, there are graphs closely related to the Sierpinski gasket graphs,
namely the extended Sierpinski graphs, which pass this test for nested
solutions for $EIP$.

\section{Generalized \& Expanded Sierpinski Graphs}

The \textit{generalized \& expanded Sierpinski graph, }$S(n,m)$, $n\geq1$,
$m\geq2$, was defined in 1944 by Scorer, Grundy and Smith \cite{S-G-S}:
$V_{S(n,m)}=\left\{  0,1,...,m-1\right\}  ^{n}$. For $\left\{  u,v\right\}
\in\binom{V}{2}$, $\left\{  u,v\right\}  \in E_{S(n,m)}$ iff $\exists
h\in\left\{  1,2,...,n\right\}  $ such that following 3 conditions hold:

\begin{enumerate}
\item $u_{i}=v_{i}$ for $i=1,2,...h-1$;

\item $u_{h}\neq v_{h}$; and

\item $u_{j}=v_{h}$ and $v_{j}=u_{h}$ for $j=h+1,...,n$.
\end{enumerate}

The motivation for defining $S(n,m)$ was that $S(n,3)$ is (isomorphic to) the
graph of the $3$-peg Towers of Hanoi puzzle with $n$ disks \cite{S-G-S}.
Scorer, Grundy and Smith pointed out that $SG_{n}$ is a quotient of $S(n,3)$
where every edge of $S(n,3)$ not contained in a triangle ($K_{3}$) is
contracted to a vertex. Also, the Sierpinski sponge, $SP_{n}=S[n,4]$ is a
similar quotient of $S(n,4)$. Jakovac \cite{Jak} generalized the construction
to $S[n,m]$, the quotient of $S(n,m)$ in which every edge of $S(n,m)$ not
contained in a $K_{3}$ is contracted to a vertex. He showed that $S[n,m]$ is
hamiltonian and its chromatic number is $m$.

\subsection{Structure of $S(n,m)$}

\subsubsection{The Basics}

$\left\vert V_{S(n,m)}\right\vert =m^{n}$. All $v\in V_{S(n,m)}$ have $m-1$
"interior" neighbors. These are the $n$-tuples that agree with $v$ in all
coordinates except the $n^{th}$ (the case $h=n$ in the definition of
$S(n,m)$). If $v\neq i^{n}$ then $v$ has one other ("exterior") neighbor: If
$v\neq i^{n}$ $\exists h,1\leq h<n$, such that $v_{h}\neq v_{h+1}%
=v_{h+2}=...=v_{n}$ and by definition the exterior neighbor of $v$ is
$u=v_{1}v_{2}...v_{h-1}v_{h+1}v_{h}v_{h}...v_{h}$ (note that this relationship
between $u$ and $v$ is symmetric). Thus $i^{n}$, with $i=0,1,2,...,m-1$, has
degree $m-1$ and every other vertex has degree $m$. Summing the degrees of all
vertices we get $m\left(  m-1\right)  +\left(  m^{n}-m\right)  m=$ $m^{n+1}%
-m$. Since each edge is incident to two vertices, $\left\vert E_{S(n,m)}%
\right\vert =\left(  m^{n+1}-m\right)  /2$. $K_{m}^{n}$ also has $m^{n}$
vertices (we take $V_{K_{m}^{n}}$ to be the same set, $\left\{
0,1,...,m-1\right\}  ^{n}$) and $m^{n}\left(  m-1\right)  n/2$ edges. Thus the
density of edges of $S(n,m)$ relative to $K_{m}^{n}$ is $\left(  \left(
m^{n+1}-m\right)  /2\right)  /\left(  m^{n}\left(  m-1\right)  n/2\right)
=1/n$ which is decreasing in $n$.

The vertices that agree in all except the last coordinate induce a complete
subgraph, $K_{m}$. These $K_{m}s$ are maximal, nonoverlapping and contain all
the vertices of $S(n,m)$. There are $m^{n-1}$ of them, constituting a $K_{m}%
$-decomposition of $S(n,m)$. Since any vertex is incident to at most one
exterior edge, any triangle ($K_{3}$) must contain at least two internal
edges. But then the third edge would also be internal to the same $K_{m}$, so
this $K_{m}$-decomposition is unique.

The vertices $i^{n}$, for $i=0,1,2,...,m-1$, are called \textit{corner}
vertices of $S(n,m)$. We can use them to charactize $S(n,m)$ (up to
isomorphism) recursively: Let $S(n,m)|_{v_{1}=i}$ be the subgraph of $S(n,m)$
induced by the vertices whose first coordinate is $i$. It is easy to see that
$S(n,m)|_{v_{1}=i}\simeq S(n-1,m)$. Again, these copies of $S(n-1,m)$
partition the vertices of $S(n,m)$. The edges of $S(n,m)$ not induced by
$S(n,m)|_{v_{1}=i}$ for some $i$, connect a corner of $S(n,m)|_{v_{1}=i}$ to a
corner of $S(n,m)|_{v_{1}=j}$, $i\neq j$. The rule for such a connection is
that $u\in S(n,m)|_{u_{1}=i}$ is connected to $v\in S(n,m)|_{v_{1}=j}$ iff
$u=ij^{n-1}$ and $v=ji^{n-1}$. The initial step of the recursion is to take
$S(1,m)=K_{m}$ with $V_{K_{m}}=\left\{  0,1,2,...,m-1\right\}  $. Given
$S(n-1,m),$ $n\geq2$, we construct $S(n,m)$ from $\left\{
0,1,...,m-1\right\}  \times S(n-1,m)$ by connecting $ij^{n-1}$ to $ji^{n-1}$
($\forall i\neq j$) as above.

\begin{theorem}
The symmetry group of $S(n,m)$ is $\mathcal{S}_{m}$, the symmetric group on
$m$ generators. $\mathcal{S}_{m}$ acts on the coordinates of $V_{S(n,m)}%
=\left\{  0,1,2,...,m-1\right\}  ^{n}$, \textit{i.e.}
\[
\pi\left(  v_{1},v_{2},...,v_{n},\right)  =\left(  \pi\left(  v_{1}\right)
,\pi\left(  v_{2}\right)  ,...,\pi\left(  v_{n}\right)  \right)  \text{.}%
\]

\end{theorem}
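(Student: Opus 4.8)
The plan is to establish the two inclusions $\mathcal{S}_m\subseteq\operatorname{Aut}(S(n,m))$ and $\operatorname{Aut}(S(n,m))\subseteq\mathcal{S}_m$ separately. The first is immediate: conditions (1)--(3) defining $E_{S(n,m)}$ are phrased entirely in terms of equalities and inequalities among coordinate values, so applying any permutation $\pi$ of the symbol set $\{0,1,\dots,m-1\}$ to every coordinate of every vertex carries edges to edges; thus $v\mapsto(\pi(v_1),\dots,\pi(v_n))$ is an automorphism, and it is nontrivial whenever $\pi$ is (it sends the corner $i^n$ to $\pi(i)^n$). So $\mathcal{S}_m$ embeds in $\operatorname{Aut}(S(n,m))$. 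For the reverse inclusion the crux is the claim $(\ast)$: \emph{every automorphism of $S(n,m)$ fixing all $m$ corner vertices $i^n$ is the identity.} Granting $(\ast)$, an automorphism permutes the corners, since by the degree count in the text these are exactly the vertices of degree $m-1$ (all others having degree $m$); this gives a homomorphism from $\operatorname{Aut}(S(n,m))$ to the symmetric group on the $m$ corners whose kernel is trivial by $(\ast)$, and which is surjective because the copy of $\mathcal{S}_m$ found above already realizes every permutation of the corners. Hence $\operatorname{Aut}(S(n,m))=\mathcal{S}_m$, acting on coordinates as stated.

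I would prove $(\ast)$ by induction on $n$. The case $n=1$ is trivial, since $S(1,m)=K_m$ and all its vertices are corners. When $m=2$ the graph $S(n,2)$ is a path on $2^n$ vertices (its corners being the endpoints), whose only nontrivial automorphism is the reflection, i.e.\ the swap $0\leftrightarrow1$ applied to all coordinates; so $(\ast)$, and indeed the whole theorem, holds for $m=2$, and I may assume $m\ge3$ from now on. Let $n\ge2$ and let $\psi$ fix every corner of $S(n,m)$. Since $m\ge3$, the internal edges of the (unique) $K_m$-decomposition are precisely the edges lying in a triangle, while exterior edges lie in none --- the two endpoints of an exterior edge belong to different, hence disjoint, $K_m$'s (each vertex being incident to at most one exterior edge), so they have no common neighbour. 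Consequently $\psi$ maps internal edges to internal edges and therefore permutes the $K_m$-blocks, which are the connected components of the subgraph of internal edges.

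Next I would contract each $K_m$-block to a point, keeping the exterior edges, to obtain a graph $Q$. Under the labelling assigning to a block the $(n-1)$-tuple of first $n-1$ coordinates shared by its vertices, one checks directly from the definition of $S(n,m)$ that $Q\cong S(n-1,m)$, the corner block $i^{n-1}$ of $Q$ being the block containing the corner $i^n$ of $S(n,m)$. The permutation of blocks induced by $\psi$ is then an automorphism of $Q\cong S(n-1,m)$ fixing all of its corners, hence the identity by the inductive hypothesis; so $\psi$ maps every $K_m$-block onto itself. Since each $K_m$-block lies inside one of the $m$ copies $V_i:=S(n,m)|_{v_1=i}\cong S(n-1,m)$ (the one indexed by the common first coordinate of its vertices), it follows that $\psi(V_i)=V_i$ for every $i$. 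For $i\ne j$ there is, by the recursive description in the text, exactly one edge of $S(n,m)$ joining $V_i$ to $V_j$, namely $\{ij^{n-1},ji^{n-1}\}$; since $\psi$ fixes $V_i$ and $V_j$ setwise it must fix this edge, and as $ij^{n-1}\in V_i$ while $ji^{n-1}\in V_j$ it fixes $ij^{n-1}$ itself. The corners of $V_i\cong S(n-1,m)$ are exactly the vertices $ij^{n-1}$ for $j=0,\dots,m-1$ (with $ii^{n-1}=i^n$ already fixed), so $\psi|_{V_i}$ is an automorphism of $S(n-1,m)$ fixing all its corners, hence the identity by the inductive hypothesis. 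As the $V_i$ partition $V_{S(n,m)}$, we get $\psi=\mathrm{id}$, completing the induction.

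The step I expect to be the main obstacle is extracting the self-similar structure in an automorphism-invariant way: recognizing the $K_m$-decomposition via triangles (which is why $m=2$ must be handled apart), proving $Q\cong S(n-1,m)$, and --- crucially --- arranging the induction so that "fixes every corner" descends simultaneously to the quotient $Q$ and to each first-coordinate copy $V_i$. A possible alternative that avoids the two-level induction would be to show that the map sending a vertex to its vector of distances to the $m$ corners is injective (it visibly is for small $n$); then $(\ast)$ is immediate. However, a general proof of that injectivity appears to require an induction of comparable difficulty, so the structural argument above seems the cleaner route.
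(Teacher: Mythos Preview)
Your argument is correct. The two inclusions are set up cleanly; the only nontrivial part is $(\ast)$, and your induction handles it: for $m\ge 3$ the triangle criterion singles out the internal edges and hence the $K_m$-blocks, the block-quotient is indeed $S(n-1,m)$ (an exterior edge with parameter $h<n$ determines, and is determined by, the $S(n-1,m)$-edge between the two block labels), so the induced map on the quotient fixes every block; from there the unique $V_i$--$V_j$ edge forces $\psi$ to fix each $ij^{n-1}$, and the second use of the inductive hypothesis on $V_i\cong S(n-1,m)$ finishes. The separate treatment of $m=2$ is necessary and correctly handled.

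As for comparison with the paper: the paper does not give a proof of this theorem at all --- it simply refers the reader to \cite{Har15'} (Theorem~1) and \cite{H-K-M-P} (Theorem~4.14). So there is no ``paper's own proof'' to compare against; your write-up supplies a complete, self-contained argument where the paper outsources one. The structural route you take (exploit the canonical $K_m$-decomposition, pass to the quotient $S(n-1,m)$, then to the first-coordinate copies $S(n-1,m)$) is the standard one and is essentially what the cited references do, so your approach is in line with the literature even though the present paper omits the details.
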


For a proof see \cite{Har15'} (Theorem 1) or \cite{H-K-M-P} (Theorem 4.14).

\subsubsection{Recursive Definition of $S(n,m)$}

The Sierpinski graph, $S(n,m)$, has been defined analytically at the beginning
of Section 3. $S(n,m)$ may also be characterized recursively: $S(1,m)=K_{m}$
and given $S(n-1,m)$ for $n-1\geq1$,
\[
V_{S(n,m)}=\left\{  0,1,...,m-1\right\}  \times V_{S(n-1,m)}%
\]
and
\[
E_{S(n,m)}=\left\{  0,1,...,m-1\right\}  \times E_{S(n,m)}+\left\{  \left\{
ij^{n-1},ji^{n-1}\right\}  :0\leq i,j\leq m-1,i\neq j\right\}  .
\]
The key property here is that the edge, $\left\{  ij^{n-1},ji^{n-1}\right\}  $
connects $\left(  i,S(n-1,m)\right)  $ to $\left(  j,S(n-1,m)\right)  $ at
unique corner vertices $ij^{n-1},ji^{n-1}$. That is to say every copy of
$S(n-1,m)$ is connected to every other copy and the edges form a complete
matching. However, any such correspondence between vertices of $\left(
i,S(n-1,m)\right)  $ for $i=0,1,...,m-1$ determines a graph isomorphic to
$S(n,m)$. Reducing the relationship to its essence, $\left\{  \left\{
ij,ji\right\}  :0\leq i,j\leq m-1,i\neq j\right\}  $ is a complete matching of
$K_{m,m}-\left\{  i^{2}:0\leq i<m\right\}  $. Actually, any complete matching
of $K_{m,m}$ will determine a graph isomorphic to $S(n,m)$. Another such
correspondence is
\[
\left\{  \left\{  ik,jk\right\}  :0\leq i,j\leq m-1,i\neq
j,i+j=k(\operatorname{mod}m)\right\}  .
\]
In \cite{Har15'} it is shown that with the latter coordinates, $S(n,m)$ is a
subgraph of $K_{m}^{n}$.

\subsubsection{Linear Coordinates for $S(n,m)$}

If $v\in V_{S(n,m)}$, so $v=\left(  v_{1},v_{2},...,v_{n}\right)  $ where
$v_{i}\in\left\{  0,1,...,m-1\right\}  ,$ then its representation in
$\mathbb{R}^{m}$ is
\[
y\left(  v\right)  =\left(  \sum_{v_{i}=0}2^{i},\sum_{v_{i}=1}2^{i}%
,...,\sum_{v_{i}=m-1}2^{i}\right)  \text{.}%
\]
Note that%
\begin{align*}
\sum_{j=1}^{n}y_{j}\left(  v\right)   &  =\sum_{i=0}^{m-1}2^{i}\\
&  =2^{n}-1\text{.}%
\end{align*}
So the vertices of $S(n,m)$ are actually lying in the hyperplane,
\[
\sum_{j=1}^{n}y_{j}=2^{n}-1\text{.}%
\]
The coordinates of $y\left(  v\right)  $ are integral, non-negative and
characterized by the fact that each power of $2$ ($2^{i}$ for $0\leq i<m$)
occurs in exactly one of the base two representations of the $y_{j}s$. Also,
two vertices are connected by an edge iff the Euclidean distance between them
is $1$.

\subsection{The $EIP$ on $S(n,m)$}

The intuition (Section 2.2.1, second paragraph) suggesting that $SG_{n}%
=S[n,3]$ might have nested solutions for $EIP$ applies more generally to
$S[n,m]$. However, the same counterexample works for $S[n,m]$ except when
$n=1$ (all $m$) and $n=2,m=3$. On the other hand, since the density of edges
in $S(n,m)$ decreases with $n$ (see Section 3.1.1), the intuition also applies
to $S(n,m)$. It suggests that for $\ell=\left(  3^{n}-1\right)  /2$, the
disjoint union $\left\{  0\right\}  \times S(n-1,3)+\left\{  10\right\}
\times S(n-2,3)+...+\left\{  1^{n-1}0\right\}  $, which is sequentially
optimal, should be optimal (minimize $\left\vert \Theta\left(  S\right)
\right\vert $ for $S$ having cardinality $3^{n-1}+3^{n-2}+...+1=\frac{3^{n}%
-1}{3-1}=$ $\frac{3^{n}-1}{2}$). The transposition $02$ of $\left\{
0,1,2\right\}  $ gives another sequentially optimal set, $\left\{  2\right\}
\times S(n-1,3)+\left\{  12\right\}  \times S(n-2,3)+...+\left\{
1^{n-1}2\right\}  $. Both of these sets are optimally extended by adding
$1^{n}$. The complement of the first set is the extension of the second (and
\textit{vice versa}). Thus the two sequentially optimal paths meet in the
middle satisfying our necessary condition (Section 2.2.1). A sequentially
optimal numbering for $S(n,m)$ is given by lexicographic order on $\left\{
0,1,...,m-1\right\}  ^{n}$,
\[
\eta\left(  v\right)  =Lex(v)=1+\sum_{i=1}^{n}v_{i}m^{n-i}\text{.}%
\]
Note that $\sum_{i=1}^{n}v_{i}m^{n-i}$ is the base $m$ representation of an
integer between $0$ and $m^{n}-1$.

The analog of the theorem of Rajasingh\textit{\ et al }(see the last paragraph
of Section 2.11)\textit{\ }holds for $S(n,m)$.

\begin{theorem}
If $\emptyset\varsubsetneq S\varsubsetneq V_{S(n,m)}$ then $\left\vert
\Theta\left(  S\right)  \right\vert \geq m-1$ and this bound is sharp
(achieved by the subgraphs of $S(n,m)$ isomorphic to $S(n^{\prime},m)$) if
$\left\vert S\right\vert =m^{n^{\prime}}$, $0\leq n^{\prime}<n$.

\begin{proof}
This follows from the fact that any two distinct points in $S(n,m)$ may be
connected by $m-1$ disjoint paths. So if $v\in S$ and $w\notin S$ there must
be $m-1$ disjoint paths from $v$ to $w$. Each such path will have a first
vertex, $w^{\prime}$, not in $S$ and the edge, $\left\{  v^{\prime},w^{\prime
}\right\}  $ from the previous vertex, $v^{\prime}\in S$, to $w^{\prime}\notin
S$, will be in the edge-boundary of $S$. Since the paths are disjoint,
$\left\vert \Theta\left(  S\right)  \right\vert \geq m-1$. Also,
$Lex^{-1}(\left\{  1,2,...,m^{n^{\prime}}\right\}  )\simeq S\left(  n^{\prime
},m\right)  $ so those initial segments are solutions.
\end{proof}
\end{theorem}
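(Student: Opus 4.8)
The plan is to establish the lower bound $|\Theta(S)| \geq m-1$ via a connectivity (Menger-type) argument, and then verify sharpness by exhibiting the explicit sets that meet the bound.

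The plan is to derive the lower bound $|\Theta(S)|\ge m-1$ from the fact that $S(n,m)$ is $(m-1)$-edge-connected --- precisely what makes the ``$m-1$ disjoint paths'' argument go through --- and then to check sharpness by inspecting one explicit copy of $S(n',m)$ sitting inside $S(n,m)$.

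First I would show, by induction on $n$, that the edge-connectivity $\lambda(S(n,m))$ is at least $m-1$. The base case $S(1,m)=K_m$ is clear. For the inductive step I would invoke the recursive description from Section 3.1.1: $S(n,m)$ is the union of the $m$ subgraphs $C_i:=S(n,m)|_{v_1=i}$, each isomorphic to $S(n-1,m)$, together with the matching $M=\{\{ij^{n-1},ji^{n-1}\}:i\ne j\}$, which contains exactly one edge between each unordered pair of copies, so the graph on $\{0,\dots,m-1\}$ that records those edges is $K_m$. Now let $F$ be any edge set with $|F|\le m-2$. Since $|F|<m-1\le\lambda(C_i)$ by the inductive hypothesis, each $C_i-F$ stays connected; and since $K_m$ has edge-connectivity $m-1>|F\cap M|$, deleting the $M$-edges of $F$ leaves that quotient graph connected. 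Routing paths through the surviving $C_i-F$'s along a path in the surviving quotient then shows $S(n,m)-F$ is connected, so $\lambda(S(n,m))\ge m-1$.

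Given $(m-1)$-edge-connectivity, Menger's theorem provides, for any vertices $v,w$, a family of $m-1$ pairwise edge-disjoint paths from $v$ to $w$. If $v\in S$ and $w\notin S$, each of these paths has a first edge leaving $S$; all $m-1$ of these edges lie in $\Theta(S)$ and they are distinct because the paths share no edge, whence $|\Theta(S)|\ge m-1$. For sharpness, fix $0\le n'<n$ and $a\in\{0,\dots,m-1\}$ and take $S=S(n,m)|_{v_1=\cdots=v_{n-n'}=a}$, which is isomorphic to $S(n',m)$ and has $m^{n'}$ vertices. Its corner vertices are the tuples $a^{n-n'}c^{n'}$: the one with $c=a$ is the global corner $a^n$ and has no exterior edge; each of the other $m-1$ has exactly one exterior edge, which leaves $S$; and any non-corner vertex of $S$ has its exterior neighbour still beginning with $n-n'$ copies of $a$, hence inside $S$. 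So $|\Theta(S)|=m-1$, and the initial $Lex$-segment $Lex^{-1}(\{1,\dots,m^{n'}\})$ is the case $a=0$.

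I expect the inductive step to be the only real obstacle: one has to be careful that $M$ meets every pair of copies in exactly one edge (so the quotient is genuinely $K_m$, which supplies the needed connectivity margin) and to spell out the concatenation of intra-copy paths and matching edges that witnesses connectivity of $S(n,m)-F$. The remaining steps --- the first-crossing-edge argument and the sharpness count --- are routine given the degree facts in Section 3.1.1.
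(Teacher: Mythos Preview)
Your proposal is correct and follows essentially the same approach as the paper: both use $(m-1)$-connectivity of $S(n,m)$ together with the first-crossing-edge argument on disjoint paths, and both invoke the sub-Sierpinski copies for sharpness. The only difference is that the paper simply \emph{asserts} the existence of $m-1$ disjoint paths between any two vertices, whereas you actually prove $(m-1)$-edge-connectivity by induction on $n$ (using the recursive decomposition and the $K_m$ quotient structure) and then invoke Menger; likewise, the paper just notes $Lex^{-1}(\{1,\dots,m^{n'}\})\simeq S(n',m)$ for sharpness, while you explicitly count the boundary edges of the corner sub-copy.
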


However, being sequentially optimal and satisfying the necessary condition
does not constitute a proof that initial $\ell$-segments of $Lex$ actually
minimize $\left\vert \Theta\left(  S\right)  \right\vert $ for all
cardinalities $\ell$. It could still be possibile that some $\ell\neq
m^{n^{\prime}}$ there is a strange collection of $\ell$ vertices in $S(n,m) $
that has smaller edge-boundary than $Lex^{-1}\left(  \boldsymbol{\ }\left\{
1,2,...,\ell\right\}  \right)  $.

\begin{conjecture}
The generalized \& expanded Serpinski graph, $S(n,m)$, has nested solutions
for the $EIP$. $Lex$ (lexicographic order on $V_{S(n,m)}=\left\{
0,1,...,m-1\right\}  ^{n}$) is not only sequentially optimal but initial
$\ell$-segments of $Lex$ minimize $\left\vert \Theta\left(  S\right)
\right\vert $ for $S$ having cardinality $\ell$. In other words, $\left\vert
\Theta\left(  Lex^{-1}\left(  \left\{  0,1,...,\ell\right\}  \right)  \right)
\right\vert =\left\vert \Theta\right\vert \left(  S(n,m);\ell\right)  $
$\forall\ell,0\leq\ell\leq m^{n}$.
\end{conjecture}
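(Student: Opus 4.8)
The plan is to prove the conjecture by induction on $n$, exploiting the recursive structure $S(n,m) = \{0,1,\dots,m-1\} \times S(n-1,m)$ together with the complete matching of the $m$ corner-to-corner connecting edges. The base case $n=1$ is Example 1 (the complete graph $K_m$), where every $\ell$-set is optimal. For the inductive step, write a candidate set $S \subseteq V_{S(n,m)}$ as $S = \bigsqcup_{i=0}^{m-1} \{i\} \times S_i$ where $S_i \subseteq V_{S(n-1,m)}$, and set $\ell_i = |S_i|$, so $\sum_i \ell_i = \ell$. Then
\[
|\Theta(S)| = \sum_{i=0}^{m-1} |\Theta_{S(n-1,m)}(S_i)| + |\{\text{connecting edges in } \Theta(S)\}|.
\]
By the inductive hypothesis each $|\Theta_{S(n-1,m)}(S_i)| \geq |\Theta|(S(n-1,m);\ell_i)$, and a connecting edge $\{ij^{n-1}, ji^{n-1}\}$ contributes to $\Theta(S)$ exactly when the corner $j^{n-1}$ of the $i$-th copy lies in $S_i$ but the corner $i^{n-1}$ of the $j$-th copy lies outside $S_j$ (or vice versa). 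So the problem reduces to a finite optimization: over all ways of distributing $\ell$ among $\ell_0,\dots,\ell_{m-1}$ and all ways of choosing which corners of each copy are occupied, minimize $\sum_i f_{n-1}(\ell_i)$ plus the corner-mismatch count, where $f_{n-1} = |\Theta|(S(n-1,m);\cdot)$. One must show this minimum is achieved by concentrating mass lexicographically — filling copy $0$ completely (using its own lexicographically optimal set, which by induction occupies the corner $0^{n-1}$ last and the corners $1^{n-1},\dots,(m-1)^{n-1}$ of the appropriate internal $K_m$-structure), then copy $1$, and so on, which is precisely $Lex^{-1}(\{1,\dots,\ell\})$.

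The key technical input I would develop first is a sharp description of the \emph{compression behavior} of the lexicographic optimal sets with respect to corners: for each $\ell$, one needs to know, among all optimal $\ell$-subsets of $S(n-1,m)$, whether there is one containing a prescribed set of corners, and how the "corner profile" of $Lex^{-1}(\{1,\dots,\ell\})$ looks as $\ell$ grows. Because each copy contributes at most one connecting edge per corner and has exactly $m$ corners, the corner-mismatch term is bounded and behaves additively, so the dominant cost is $\sum_i f_{n-1}(\ell_i)$. I would then prove a \emph{convexity/subadditivity lemma}: $f_{n-1}$ is such that spreading $\ell$ across several copies never helps — more precisely, $f_{n-1}(\ell_i) + f_{n-1}(\ell_j) \geq f_{n-1}(\ell_i + \ell_j) + f_{n-1}(0)$ whenever $\ell_i + \ell_j \leq m^{n-1}$, i.e. $f_{n-1}$ is superadditive at $0$, plus a matching statement accounting for how filling a copy completely ($\ell_i = m^{n-1}$, cost $f_{n-1}(m^{n-1}) = 0$) interacts with the $m-1$ connecting edges that copy then sends out (Theorem 2 with $n' = n-1$). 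Combining these, a minimizing configuration has at most one "partial" copy, all others being empty or full, and among full-or-empty patterns the $K_m$-structure on the copy-indices (which is itself a $K_m$, by the $K_m$-decomposition) forces the full copies to form a lexicographic initial block; the single partial copy is then filled lexicographically by induction.

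The main obstacle I anticipate is the \emph{interaction between the internal edge-boundary term and the corner-mismatch term} — these are not independent, since forcing an optimal $S_i$ to contain or avoid a particular corner may increase $|\Theta_{S(n-1,m)}(S_i)|$ above $f_{n-1}(\ell_i)$. In other words, the naive lower bound $\sum_i f_{n-1}(\ell_i) + (\text{mismatch})$ may not be simultaneously achievable, and one must either (a) show that the lexicographic set does achieve both bounds at once (a statement about where corners sit in $Lex$ order — plausibly, in the coordinates where $S(n,m) \subseteq K_m^n$, the corner $i^{n'}$-type vertices have controlled $Lex$-rank), or (b) prove a strengthened inductive hypothesis that tracks, for each $\ell$ and each target corner-pattern $P$, the minimum of $|\Theta(S)|$ over $\ell$-sets $S$ realizing pattern $P$, and show this refined profile is again realized by a suitable compression. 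Option (b) makes the induction go through but inflates the bookkeeping considerably; I expect the crux of the paper is managing this refined profile — essentially an isoperimetric theorem "relative to the corners" — and showing it is preserved under the recursive construction. A secondary difficulty is handling the asymmetry that corner vertices $i^n$ have degree $m-1$ rather than $m$, which slightly perturbs the marginal-cost analysis near the ends of the $Lex$ numbering, though this should be absorbed into the base case and the $n'=0$ instance of Theorem 2.
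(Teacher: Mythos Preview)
Your plan is broadly aligned with the paper's, but you should be aware of two things up front. First, the paper does \emph{not} prove Conjecture~1 for general $m$: it proves only the cases $m=2$ (trivially, $S(n,2)$ is a path) and $m=3$ (the Main Theorem). For $m\geq 4$ the conjecture remains open there, precisely because the case analysis in the final step becomes unmanageable by hand. So any plan that reads as ``this will work for all $m$'' is over-optimistic relative to what the paper achieves.

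Second, your option~(b) --- strengthening the inductive hypothesis to track corner patterns --- is exactly what the paper does: it introduces graphs $S_{s,t}(n,m)$ with external half-edges at the corners labelled by a partition $I\cup J\cup K$ of $\{0,\dots,m-1\}$ (in / free / out) and formulates Conjecture~2, that $Lex$ is optimal on every $S_{s,t}(n,m)$. This is the ``isoperimetric theorem relative to the corners'' you anticipate. Your diagnosis of the obstacle (the internal boundary and corner-mismatch terms are coupled) is correct, and the $S_{s,t}$ formalism is the paper's way of decoupling them enough for induction.

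Where your sketch has a real gap is in the reduction to ``at most one partial copy''. Subadditivity of $f_{n-1}$ alone is not enough: merging two partial copies into one can create or destroy up to two connecting edges, and ordinary subadditivity gives you only a slack of $\geq 0$, not $\geq 2$. The paper needs \emph{strong} subadditivity ($f(k)+f(\ell)\geq f(k+\ell)+1$ for $k,\ell\neq 0$, Theorem~7) plus a sharpened version (Lemma~10 and its dual) giving slack $\geq 2$ in the regime where the extra connecting edge appears. Proving strong subadditivity is itself an induction requiring a fine decomposition $|\Theta| = |\Theta_0| + |\Theta_1|$ of the profile.

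The other missing ingredient is symmetry. The paper uses stabilization with respect to the $\mathfrak{S}_m$-action (which persists on $S_{s,t}(n,m)$, Theorem~5) to force $\ell_0\geq\ell_1\geq\cdots\geq\ell_{m-1}$ and to cut the configuration space down to ideals of a stabilization order before doing any compression. Without this, the ``finite optimization'' you describe over corner-occupancy patterns is much larger, and it is not clear how you would organize the case analysis. Even with stabilization and the strengthened subadditivity, the paper's final subadditivation step for $m=3$ still splits into roughly $40$ cases indexed by pairs (ideal of the $21^n$-component, value of $(s,t)$); this is where the $m=3$ restriction bites.
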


\subsubsection{The Case $m=2$, all $n$}

In \cite{Har15'} It is shown that $S(n,2)$ is a path of length $2^{n}-1$. The
endpoints are the corner vertice $0^{n}$ and $1^{n}$. In between, the vertices
($V_{S(n,2)}=\left\{  0,1\right\}  ^{n}$) appear in lexicographic order. So,
starting from $0^{n}$, the initial segment, $Lex^{-1}\left(  \left\{
0,1,...,\ell\right\}  \right)  $, of cardinality $\ell$, is a solution of the
$EIP$ on $S(n,2)$ for $\left\vert S\right\vert =\ell$. This proves Conjecture
1 for $m=2$.

\section{Preliminaries to Proving Conjecture 1 for $m>2$}

\subsection{Our Strategy}

Our basic logical strategy for proving that $S(n,m)$ has Lex-nested solutions
is induction on $n$. The inductive step, reducing the conjecture for
$S(n+1,m)$ to that for $S(n,m)$ is accomplished through a series of morphisms
for $EIP$ called "Steiner operations".

\subsection{Steiner Operations on $S(n,m)$}

A \textit{Steiner operation} on a graph, $G=(V,E)$, is a function, $StOp:$
$2^{V}\rightarrow2^{V}$, mapping subsets of $V$ to subsets of $V$ such that

\begin{enumerate}
\item \bigskip$\forall S\subseteq V$, $\left\vert StOp\left(  S\right)
\right\vert =\left\vert S\right\vert $,

\item \bigskip$\forall S\subseteq V$, $\left\vert \Theta\left(  StOp\left(
S\right)  \right)  \right\vert \leq\left\vert \Theta\left(  S\right)
\right\vert $ and

\item \bigskip$\forall S\subseteq T\subseteq V$, $StOp\left(  S\right)
\subseteq StOp\left(  T\right)  $.
\end{enumerate}

This definition is taken from Chapter 2 of \cite{Har04} where the theory of
StOps is developed with applications. Properties 1 \& 2 are essential for a
mapping to preserve the $EIP$. If they hold we can say that the $StOp$
represents a simplification of the $EIP$ on $G$ since we need only consider
sets in the range of the $StOp$. However, to make that simplification
effective, we must be able to pick out those subsets of $V$ that are in the
range of $StOp$ and do it efficiently. Originally this was done for each
$StOp$ considered by finding a partial order on $V$ (the $StOp$-order,
$\mathcal{S}$-$\mathcal{O}\left(  StOp\right)  $) such that $S$ is in the
range of $StOp$ iff $S$ is an ideal (If $x\leq_{\mathcal{S}\text{-}%
\mathcal{O}}y\in S$ then $x\in S$) of $\mathcal{S}$-$\mathcal{O}\left(
StOp\right)  $. More recently \cite{Har11} we showed that Properties 1 \& 3
imply that every $StOp$ has such a $StOp$-order, $\mathcal{S}$-$\mathcal{O}%
\left(  StOp\right)  $ (characterizing its range). Property 3 is called
\textit{monotonicity}.

\subsection{Stabilization}

Stabilization is a $StOp$ that utilizes reflective symmetry of $G$ in a
systematic way to achieve its simplification. The theory of stabilization is
presented in \cite{Har04}, Chapters 3 \& 6. The most important fact about
stabilization (besides being a Steiner operation) is that cyclic compositions
of $Stab_{i,i+1}$ eventually become constant. We denote the resulting "limit"
as $Stab_{\infty}$.

\subsubsection{The Stabilization-Order of $S(n,m)$}

The symmetry group of $S(n,m)$ is $\mathfrak{S}_{m}$, the symmetric group on
$\left\{  0,1,...,m-1\right\}  $, acting on the components of $V_{S(n,m)}%
=\left\{  0,1,...,m-1\right\}  ^{n}$ (See \cite{Har15'} or \cite{H-K-M-P},
Theorem 4.14). $\mathfrak{S}_{m}$, with the generating set $W=\left\{
01,12,...,\left(  m-2\right)  \left(  m-1\right)  \right\}  $ is a Coxeter
group, \textit{i.e. }it is generated by elements of order 2 (See \cite{B-B}
for Coxeter theory). When we consider $S(n,m)$ as embedded in $\mathbb{R}^{m}$
(see Section 3.1.3), the transpositions, $ij$, correspond to reflections so
they define stabilization operations (see Section 3.2.4 of \cite{Har04}). In
particular, the fixed hyperplanes of the reflections induced by the (adjacent)
transpositions of $W$ surround the fundamental chamber, $C_{0}=\left\{
y\in\mathbb{R}^{m\text{ }}:y_{1}\geq y_{2}\geq...\geq y_{m}\geq0\right\}  $.
There are $m!$ chambers altogether, one for each member of $\mathfrak{S}_{m}$.

The stabilization-order, $\mathcal{S}$-$\mathcal{O}\left(  S(n,m)\right)  $,
is a disjoint union of components, each of which has a unique minimum element
in $C_{0}^{\prime}=y^{-1}\left(  C_{0}\right)  $ (Theorems 5.3-.5 of
\cite{Har04}).

\begin{example}
The corner vertex, $0^{n}$, is in $C_{0}^{\prime}$. Its connected component
(of $\mathcal{S}$-$\mathcal{O}\left(  S(n,m)\right)  $) is $\left\{
0^{n}\lessdot1^{n}\lessdot,...,\lessdot\left(  m-1\right)  ^{n}\right\}  $.
"$\lessdot$" represents the covering relation in $\mathcal{S}$-$\mathcal{O}$
(given by a basic transposition, $i(i+1)$).
\end{example}

\begin{example}
The vertex, $01^{n-1}$, is in $C_{0}^{\prime}$. For $m=3$ its component
consists of
\[
\left\{  01^{n-1},10^{n-1},02^{n-1},20^{n-1},12^{n-1},21^{n-1}\right\}
\]
with basic covering relations (given by adjacent transpositions, $i\left(
i+1\right)  $) $01^{n-1}\lessdot_{01}10^{n-1}\lessdot_{12}20^{n-1}%
\lessdot_{01}21^{n-1}$ and $01^{n-1}\lessdot_{12}02^{n-1}\lessdot_{01}%
12^{n-1}\lessdot_{12}21^{n-1}$. Also it has nonbasic covering relations
$10^{n-1}\lessdot_{02}12^{n-1}$ and $02^{n-1}\lessdot_{02}20^{n-1}$. Its Hasse
diagram is shown in Figure 4. The heavy lines represent actual edges of
$S(n,3)$. Note that the component of $01^{n-1}$ is also $21^{n-1}\downarrow$,
the ideal (of $\mathcal{S}$-$\mathcal{O}S(n,3)$) generated by (below)
$21^{n-1}$.%
\[%
%TCIMACRO{\FRAME{itbpFU}{2.8928in}{1.7201in}{0in}{\Qcb{Figure 4-The component
%of 01$^n$ for $m=3$}}{}{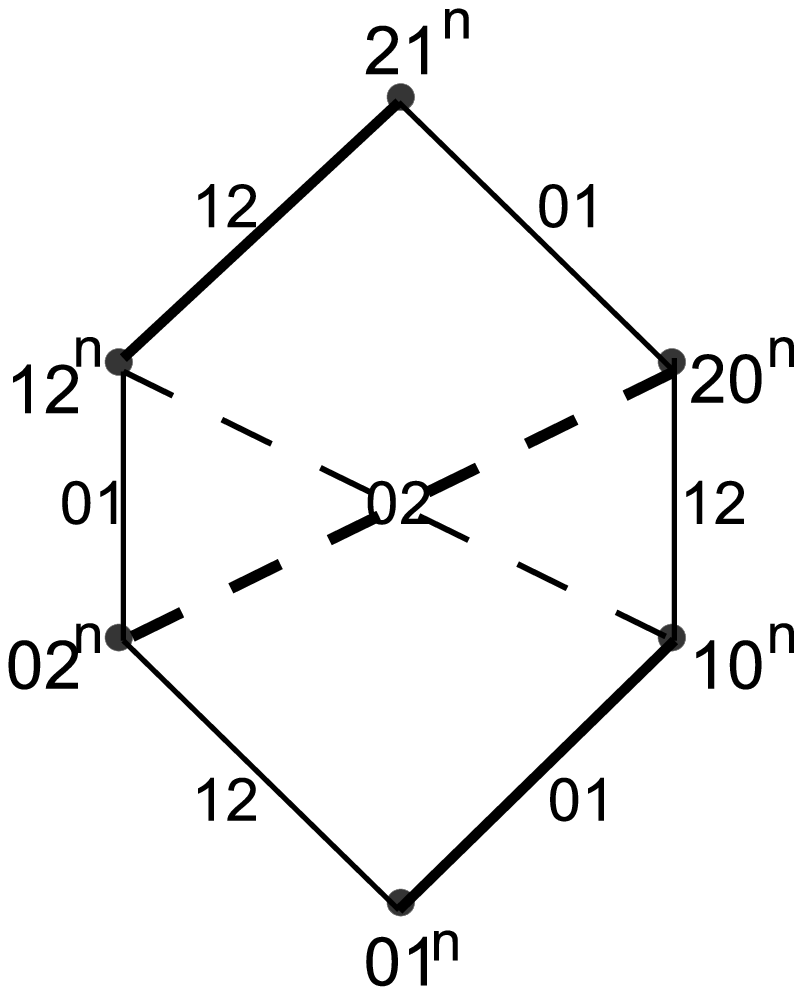}{\special{ language "Scientific Word";
%type "GRAPHIC";  maintain-aspect-ratio TRUE;  display "USEDEF";
%valid_file "F";  width 2.8928in;  height 1.7201in;  depth 0in;
%original-width 8.4968in;  original-height 11.0056in;  cropleft "0.0471";
%croptop "0.7061";  cropright "0.9520";  cropbottom "0.2936";
%filename 'figure4.eps';file-properties "XNPEU";}}}%
%BeginExpansion
{\parbox[b]{2.8928in}{\begin{center}
\includegraphics[
trim=0.400199in 3.231244in 0.407846in 3.234546in,
height=1.7201in,
width=2.8928in
]%
{figure4.eps}%
\\
Figure 4-The component of 01$^n$ for $m=3$
\end{center}}}%
%EndExpansion
\]

\end{example}

\begin{example}
For $m=4$ the corresponding Hasse diagram is Figure 5.%
\[%
%TCIMACRO{\FRAME{itbpFU}{2.9948in}{2.3722in}{0in}{\Qcb{Figure 5-The component
%of $01^n$ for $m=4$}}{}{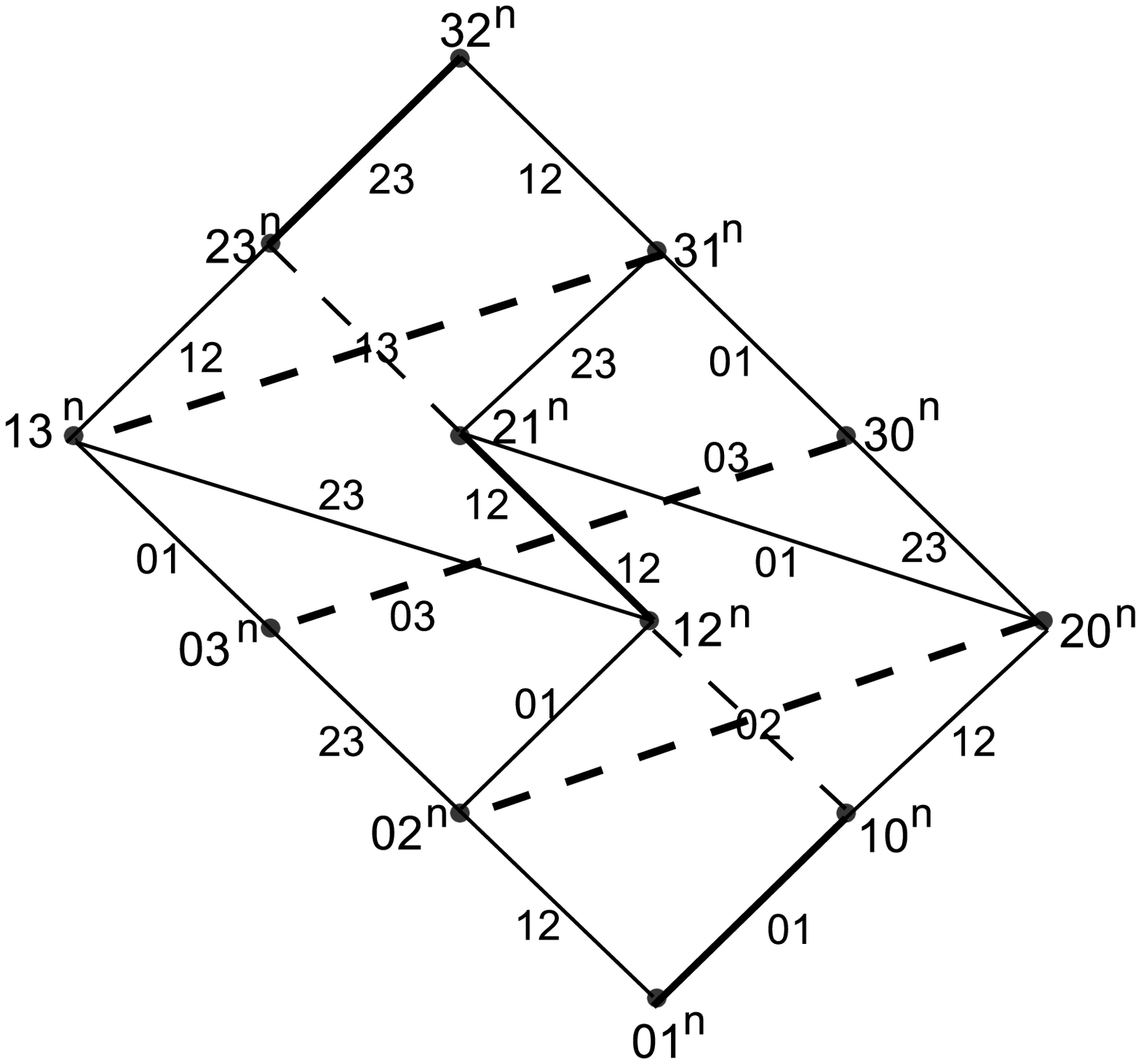}{\special{ language "Scientific Word";
%type "GRAPHIC";  maintain-aspect-ratio TRUE;  display "USEDEF";
%valid_file "F";  width 2.9948in;  height 2.3722in;  depth 0in;
%original-width 8.4968in;  original-height 11.0056in;  cropleft "0.0311";
%croptop "0.7918";  cropright "0.9678";  cropbottom "0.2202";
%filename 'figure5.eps';file-properties "XNPEU";}}}%
%BeginExpansion
{\parbox[b]{2.9948in}{\begin{center}
\includegraphics[
trim=0.264250in 2.423433in 0.273597in 2.291366in,
height=2.3722in,
width=2.9948in
]%
{figure5.eps}%
\\
Figure 5-The component of $01^n$ for $m=4$
\end{center}}}%
%EndExpansion
\]

\end{example}

\begin{example}
The stabilization-order of $S(2,3)$ has just two components, those given in
the Examples 6 \& 7. Compare to $\mathcal{S}$-$\mathcal{O}\left(  S\left[
2,3\right]  \right)  $ in Figure 2.
\end{example}

Recall that an \textit{ideal, }$\iota$, of a partially ordered set (poset) is
subset of the poset that is downward closed, \textit{i.e.} if $x\leq y$ \&
$y\in\iota$ then $x\in\iota$. The \textit{ideal transform, }$\mathfrak{I}%
\left(  \mathcal{P}\right)  $ of a poset $\mathcal{P}$, is the set of all
ideals of $\mathcal{P}$, partially ordered by containment.

\begin{example}
Figure 6 shows the ideal transform of the component of $01^{n}$ for $m=3$
(Figure 4).%
\[%
%TCIMACRO{\FRAME{itbpFU}{2.5927in}{2.8253in}{0in}{\Qcb{Figure 6-The ideal
%transform of $21\downarrow$}}{}{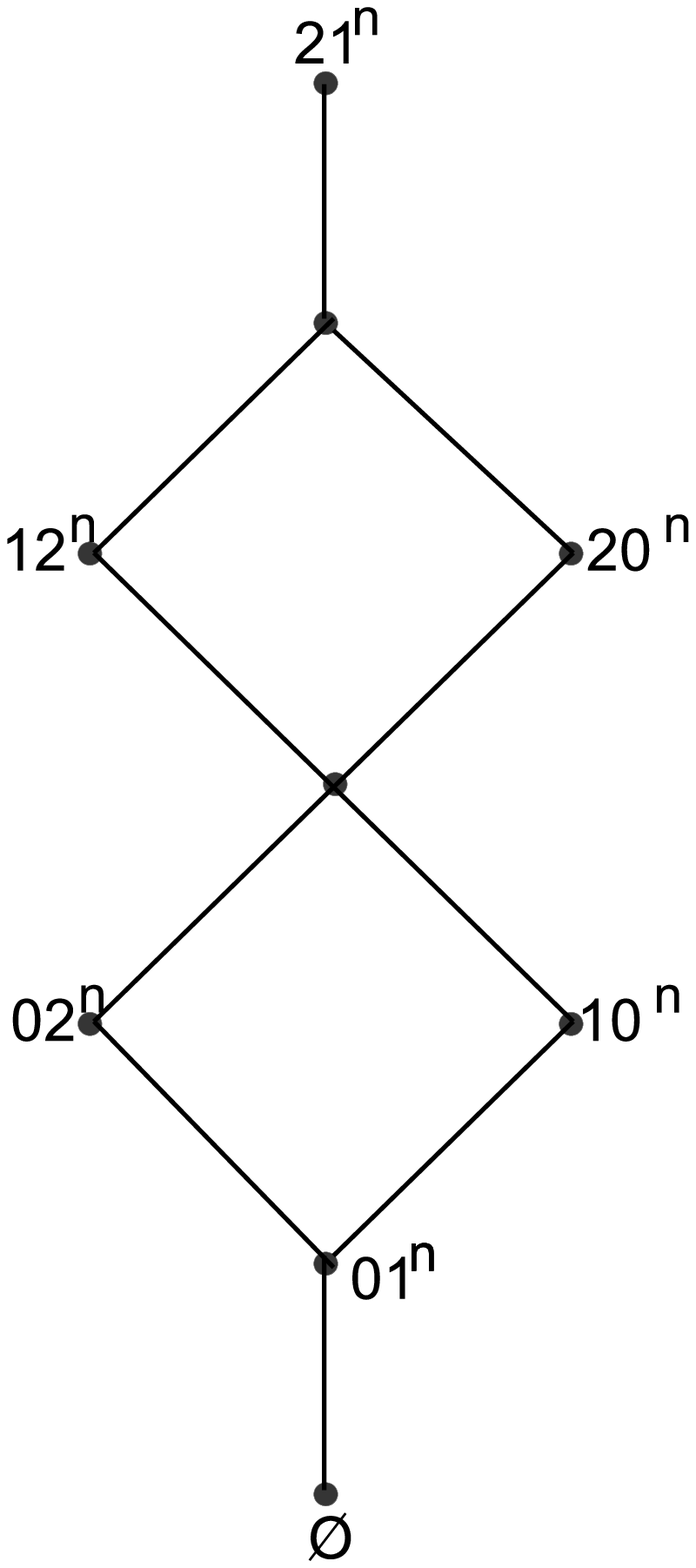}%
%{\special{ language "Scientific Word";  type "GRAPHIC";
%maintain-aspect-ratio TRUE;  display "USEDEF";  valid_file "F";
%width 2.5927in;  height 2.8253in;  depth 0in;  original-width 8.4968in;
%original-height 11.0056in;  cropleft "0.0947";  croptop "0.8531";
%cropright "0.9046";  cropbottom "0.1711";
%filename 'figure6.eps';file-properties "XNPEU";}}}%
%BeginExpansion
{\parbox[b]{2.5927in}{\begin{center}
\includegraphics[
trim=0.804647in 1.883058in 0.810595in 1.616723in,
height=2.8253in,
width=2.5927in
]%
{figure6.eps}%
\\
Figure 6-The ideal transform of $21\downarrow$
\end{center}}}%
%EndExpansion
\]

\end{example}

Each point in the diagram represents an ideal, the set of elements below it.
So the (unlabeled) midpoint of the diagram represents the ideal $\left\{
01^{n},02^{n},10^{n}\right\}  $

\subsubsection{Three Key Observations}

Given \ $S\subseteq V_{S(n,m)}=\left\{  0,1,...,m-1\right\}  ^{n}$, with
$\left\vert S\right\vert =\ell$, let $\left\vert S\cap\left(  \left\{
k\right\}  \times S(n-1,m)\right)  \right\vert =\ell_{k}$, so $\sum
_{k=0}^{m-1}\ell_{k}=\ell$. Also let $\boldsymbol{\ell}\left(  S\right)
=\left(  \ell_{0},\ell_{1},...,\ell_{m-1}\right)  $ and totally order the
$m$-tuples $\boldsymbol{\ell}\left(  S\right)  $ lexicographicaly. Then

\begin{enumerate}
\item $\boldsymbol{\ell}\left(  Stab_{ij}\left(  S\right)  \right)
\geq\boldsymbol{\ell}\left(  S\right)  $.

\item If $S$ is stabilized wrt all the generators of $\mathfrak{S}_{m}$
(\textit{i.e. }$Stab_{ij}\left(  S\right)  =S$ $\forall ij$), then $\ell
_{0}\geq\ell_{1}\geq...\geq\ell_{m-1}$.

\item $\forall i,j,$if $\pi\in\mathfrak{S}_{m}$ and $Lex_{\pi}\left(
v\right)  =1+\sum_{i=1}^{n}v_{\pi\left(  i\right)  }m^{n-i}$, then
\[
Stab_{ij}\left(  Lex_{\pi}^{-1}\left(  \left\{  1,2,...,\ell\right\}  \right)
\right)  =\left\{
\begin{array}
[c]{ll}%
Lex_{\pi}^{-1}\left(  \left\{  1,2,...,\ell\right\}  \right)  & \text{if }%
\pi\left(  i\right)  <\pi\left(  j\right) \\
Lex_{\pi\circ ij}^{-1}\left(  \left\{  1,2,...,\ell\right\}  \right)  &
\text{if }\pi\left(  i\right)  >\pi\left(  j\right)
\end{array}
\right.
\]
So if we apply basic stabilizations sufficiently many times to $Lex_{\pi}%
^{-1}\left(  \left\{  1,2,...,\ell\right\}  \right)  $ we will get
$Lex_{\iota}^{-1}\left(  \left\{  1,2,...,\ell\right\}  \right)  $, $\iota$
being the identity permutation and $Lex_{\iota}=Lex$, the standard
Lexicographic order. $Lex^{-1}\left(  \left\{  1,2,...,\ell\right\}  \right)
$ is stable, \textit{i.e. }$\forall i,j,$ $Stab_{ij}\left(  Lex^{-1}\left(
\left\{  1,2,...,\ell\right\}  \right)  \right)  =Lex^{-1}\left(  \left\{
1,2,...,\ell\right\}  \right)  $.
\end{enumerate}

\subsection{Compression}

In \cite{Har04} compression is a Steiner operation on a product of graphs,
$G\times H$, based on at least one of the factors (say $G$) having nested
solutions (See the Appendix (Section 7.2) for more about compression on
$G\times H$). The fact that $Comp_{\mu,G\times H}$ is a $StOp$ is key to
proving nested solutions for a product, such as $K_{m}^{n}$, recursively.
Compression has been the single most powerful tool in proving combinatorial
isoperimetric theorems. In \cite{Har15} we were able to extend compression to
a self-similar structure (not a product, but still having nested solutions)
and thereby induct on the depth of self-similarity. Applying the strategy to
$S(n,m)$ has been problematic, however, in that we were unable to incorporate
Property 3 (monotonicity) into our extended definition of compression for
$S(n,m)$.

Our ultimate goal is to prove Conjecture 1, that $\forall n,m,\ell$,
$\left\vert \Theta\right\vert \left(  S(n,m);\ell\right)  =\left\vert
\Theta\left(  Lex^{-1}\left\{  1,2,...,\ell\right\}  \right)  \right\vert $.
Now suppose Conjecture 1 is true for some $n>1$ and we wish to prove it for
$n+1$ using some form of compression: Then $\left\{  h\right\}  \times S(n,m)$
is a sub-Sierpinski graph of $S(n+1,m)$. $\left\{  h\right\}  \times S(n,m)$
has $Lex$ order on it, induced by restricting $Lex$ on $S(n+1,m)$ to $\left\{
h\right\}  \times S(n,m)$, which is the same as $Lex$ order on $S(n,m)$. That
is, the numbers are not the same but their relative order is.

If $S\subseteq V_{S(n+1,m)}$ is stabilized, $\left\vert S\right\vert =\ell$
and $\left\vert S\cap V_{\left\{  j\right\}  \times S(n,m)}\right\vert
=\ell_{j}$, then $\ell_{0}\geq\ell_{1}\geq...\geq\ell_{m-1}\geq0$ and
$\sum_{j=0}^{m-1}\ell_{j}=\ell$. Emulating the definition of $Comp_{\mu
,G\times H}$, we should define the compression operation, $Comp_{Lex,j}:$
$2^{V_{S(n+1,m)}}\rightarrow$ $2^{V_{S(n+1,m)}}$, by $Comp_{Lex,j}\left(
S\right)  =S-\left(  S\cap V_{\left\{  j\right\}  \times S(n,m)}\right)
+Lex_{j}^{-1}\left\{  1,2,...,\ell_{j}\right\}  $. That is, $Comp_{Lex,j}$
should remove those elements of $S$ in $V_{\left\{  j\right\}  \times S(n,m)}$
and replace them by the initial $\ell_{j}$-segment of $Lex_{j}$. And we then
want $Comp_{Lex,j}$ to be a Steiner operation, utilizing the fact that
$Lex_{j}^{-1}\left\{  1,2,...,\ell_{j}\right\}  $ minimizes $\left\vert
\Theta\left(  S\cap V_{j\times S(n,m)}\right)  \right\vert $. However, there
is a problem with the exterior edges of $\left\{  j\right\}  \times S(n,m)$
(the ones of the form $\left\{  ji^{n},ij^{n}\right\}  $, $i\neq j$). Those
edges can effect the contribution of $S\cap V_{\left\{  j\right\}  \times
S(n,m)}$ to $\left\vert \Theta\left(  S\right)  \right\vert $ and $\left\vert
\Theta\left(  Comp_{Lex,j}\left(  S\right)  \right)  \right\vert $ so that
$Comp_{Lex,j}$ need not satisfy Property 2 of a Steiner operation. To remedy
the situation we define $I_{j}=\left\{  i\neq j:ij^{n}\in S\right\}  $ and
reorder $\left\{  0,1,...,m-1\right\}  $ so that the members of $I_{j}$ appear
in their relative order, $i_{0}<i_{1}<...<i_{\left\vert I_{j}\right\vert -1}$,
then $i_{\left\vert I_{j}\right\vert }=j$ (note that $j^{n+1}$ is the corner
vertex in $\left\{  j\right\}  \times S(n,m)$ and not connected to any vertex
outside of $\left\{  j\right\}  \times S(n,m)$) followed by the members of
$\ K_{j}=\left\{  0,1,...,m-1\right\}  -I_{j}-\left\{  j\right\}  $ in their
relative order. Then in our ammended definition of $Comp_{Lex,j}$, $Lex_{j}$
will be $Lex_{\pi}$ order on $\left\{  j\right\}  \times S(n,m)$ wrt this
reordering, $\pi=$ $I_{j}jK_{j}$, of $\ \left\{  0,1,...,m-1\right\}  $ (See
Section 4.3.2(3) for the definition of $Lex_{\pi}$).

Now let $\left\{  j\right\}  \times S(n,m)$ be the $j+1^{st}$ row (actually
the sub-Sierpinski graph, one level down) of $S(n+1,m)$ and assume that the
first $j$ rows, numbered $0,1,...,j-1$, have already been filled in. We limit
ourselves to $S\subseteq\left\{  j\right\}  \times S(n,m)$. This means that
$\left\vert \Theta\right\vert \left(  \left\{  j\right\}  \times
S(n,m);\ell\right)  $ is defined to be $\min\left\{  \left\vert \Theta\left(
S\right)  \right\vert :S\subseteq V_{\left\{  j\right\}  \times S(n,m)}%
,\left\vert S\right\vert =\ell\right\}  $ so that $\left\vert \Theta\left(
\left\{  j\right\}  \times S(n,m);0\right)  \right\vert =j$ (it counts the
edges "cut" by $0j^{n},...,\left(  j-1\right)  j^{n}$). We could then try to
prove
\[
\forall n,m,j,\ell_{j},\left\vert \Theta\right\vert \left(  \left\{
j\right\}  \times S(n,m);\ell_{j}\right)  =\left\vert \Theta\left(
Lex_{j}^{-1}\left\{  1,2,...,\ell_{j}\right\}  \right)  \right\vert
\]
by induction. Unsuccessful attempts to do so led us to extend Conjecture 1
even further: Let
\begin{align*}
I  &  =\left\{  0,1,...,s-1\right\}  ,\\
J  &  =\left\{  s,s+1,...,s+t-1\right\}  ,\\
K  &  =\left\{  s+t,s+t+1,...,m-1\right\}  ,
\end{align*}
so $\left\vert I\right\vert =s,$ $\left\vert J\right\vert =t$ $\&$ $\left\vert
K\right\vert =m-s-t$. For $s,t\geq0$ $\&$ $s+t\leq m$ consider $S_{s,t}(n,m)$
to be the graph $S(n,m)$ with "exterior" edges attached to the corner
vertices. If $i\in I+K$ then $\left\{  v_{i},i^{n}\right\}  \in E_{S_{s,t}%
(n,m)}$ and when computing $\left\vert \Theta_{s,t}\left(  S\right)
\right\vert $ we consider $v_{i}$ to be a member of $S$ if $i\in I$ but to be
in the complement of $S$ if $i\in K$ . Vertices $j^{n}$ for $j\in J $ are
considered to be corner vertices, not incident to an "exterior" edge.

\begin{conjecture}
$\forall\ell,$ $0\leq\ell\leq m^{n},$ $\left\vert \Theta_{s,t}\right\vert
\left(  S_{s,t}(n,m);\ell\right)  =$ $\left\vert \Theta_{s,t}\left(
Lex^{-1}\left(  \ell\right)  \right)  \right\vert $ .
\end{conjecture}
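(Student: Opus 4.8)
The plan is to prove Conjecture~2 by induction on $n$, with all admissible triples $(s,t,m)$ handled simultaneously at each level. For $n=1$ the graph $S_{s,t}(1,m)$ is $K_m$ in which the $s$ vertices of the block $I$ carry an exterior edge to a point counted as lying in $S$ and the $m-s-t$ vertices of $K$ carry an exterior edge to a point counted as lying in the complement. For $|S|=\ell$ the internal part of $|\Theta_{s,t}(S)|$ is the constant $\ell(m-\ell)$, while the exterior part is $|\{i\in I:i\notin S\}|+|\{i\in K:i\in S\}|$; this is minimized by putting as many elements of $I$, and as few of $K$, into $S$ as the cardinality permits, which is exactly what $Lex^{-1}(\{1,\dots,\ell\})=\{0,1,\dots,\ell-1\}$ does, and the resulting profile $|\Theta_{s,t}|(S_{s,t}(1,m);\ell)$ is $\ell(m-\ell)+\max(s-\ell,0)+\max(\ell-s-t,0)$. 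So the base case holds.

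For the inductive step, assume Conjecture~2 at level $n$ and take $S\subseteq V_{S_{s,t}(n+1,m)}$ with $|S|=\ell$. First I would stabilize: a transposition interchanging two elements of the same block $I$, $J$, or $K$ is a reflective symmetry of $S_{s,t}(n+1,m)$ (it merely permutes the exterior decorations among themselves), so each such $Stab_{i,i+1}$ is a Steiner operation and the cyclic composition of these operations converges to $Stab_\infty$; hence it suffices to treat stable $S$. For stable $S$ the row-profile $\boldsymbol{\ell}(S)=(\ell_0,\dots,\ell_{m-1})$, $\ell_h=|S\cap(\{h\}\times S(n,m))|$, is weakly decreasing within each of the three blocks, and $Lex^{-1}(\{1,\dots,\ell\})$ is itself stable (by the argument behind Observation~3 of Section~4.3.2), so the candidate extremal set is not discarded. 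Next I would compress row by row. Inside $S_{s,t}(n+1,m)$ the row $\{h\}\times S(n,m)$ carries an exterior edge at every corner: the corner $hk^{n}$ with $k\neq h$ has an exterior edge to the corner $kh^{n}$ of row $k$, counted as "in" or "out" according as $kh^{n}\in S$ or not, while the corner $hh^{n}$ inherits the decoration prescribed by the block of $h$. Hence the subgraph carried by row $h$ is isomorphic to $S_{s_h,t_h}(n,m)$, where $s_h,t_h$ are determined by the block of $h$ together with the number of other rows presently containing the relevant corner, and the reordering $\pi_h=I_h\,h\,K_h$ of Section~4.4 lines these decorations up into consecutive blocks. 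Replacing $S\cap(\{h\}\times S(n,m))$ by the $Lex_{\pi_h}$-initial segment of the same size changes only the edges internal to row $h$, the cross-row edges meeting row $h$, and the exterior edge at $hh^{n}$ --- which together are exactly the edges counted by $|\Theta_{s_h,t_h}|$ on row $h$ --- while $s_h,t_h$ themselves are untouched, since they depend only on the other rows; so the change in $|\Theta_{s,t}(S)|$ equals the change in $|\Theta_{s_h,t_h}|$ on row $h$, which is nonpositive by the inductive hypothesis.

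After stabilization and compression, $S$ is a $Lex_{\pi_h}$-initial segment on each row and has a block-monotone profile, so $|\Theta_{s,t}(S)|$ has become a function $F(\ell_0,\dots,\ell_{m-1})$ of the profile alone, and it remains to show that $F$, over block-monotone profiles with $\sum_h\ell_h=\ell$, is minimized by the \emph{lex profile} --- the one that fills the rows from left to right, all of $I$ before any of $J$ before any of $K$. The lex profile is block-monotone and its values are the images of $Lex^{-1}(\{1,\dots,\ell\})$, so this would close the induction. I would approach it by an exchange argument: if a block-monotone profile is not the lex profile there is an $h$ with $\ell_h<m^{n}$ and $\ell_{h+1}>0$, and one shows that moving a single unit from row $h+1$ to row $h$ does not increase $F$, iterating until the lex profile is reached.

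I expect this final optimization, together with the bookkeeping that makes the per-row compression a genuine Steiner operation in the presence of cross-row decorations, to be the main obstacle; it is precisely the difficulty that forced the passage from Conjecture~1 to Conjecture~2. The cross-row edges reward making consecutive rows equally full or equally empty, the level-$n$ profiles $|\Theta_{s_h,t_h}|(S_{s_h,t_h}(n,m);\cdot)$ are not convex, and the decoration parameters $s_h,t_h$ themselves move as mass is shifted between rows, so the exchange argument must control all three effects at once --- this is the monotonicity (Property~3) that has so far resisted incorporation into compression for $S(n,m)$. One reassuring sign is that the lex profile is sequentially optimal and that its complement, via the identity $|\Theta_{s,t}(S)|=|\Theta_{m-s-t,\,t}(V\setminus S)|$ obtained by exchanging the roles of $I$ and $K$ (and relabelling corners), is again a lex profile; so the "meet in the middle" obstruction that ruled out nested solutions for $SG_3$ does not recur for $S_{s,t}(n,m)$.
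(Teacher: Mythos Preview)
Your inductive scaffold --- stabilize, then compress row by row using the level-$n$ hypothesis, then optimize the resulting profile $(\ell_0,\dots,\ell_{m-1})$ --- is exactly the paper's plan. Two points, however, separate your outline from a proof.

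First, your stabilization is weaker than it needs to be. You restrict to transpositions within a single block $I$, $J$, or $K$, but the paper shows (Theorem~5) that \emph{every} $Stab_{i,j}$ with $i<j$ is a Steiner operation on $S_{s,t}(n,m)$: replacing $j^n\in S$ by $i^n$ can only move a corner from $K$ toward $J$ toward $I$, which never cuts more exterior edges. This gives the full stabilization order $\mathcal{S}$-$\mathcal{O}(S(n,m))$, not just a block-wise coarsening, and that full order is what makes the final case analysis finite. You also pass silently over the convergence of cyclic compressions (the paper needs the potential $\rho$ of Lemma~1 and Theorem~4) and over the fact that compression can destabilize; the paper handles the latter by showing (Theorem~6) that restabilizing either strictly increases $\boldsymbol{\ell}(S)$ in lex or preserves compressedness.

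Second, and this is the real gap, your one-unit exchange for the profile optimization does not work. The row profile $\ell\mapsto|\Theta(L^{-1})|(n,m;\ell)$ is not convex --- for $S(2,3)$ it reads $0,2,2,2,3,3,2,2,2,0$ --- so moving a single element from row $h+1$ to row $h$ can strictly increase the internal contribution (e.g.\ $(\ell_0,\ell_1)=(3,3)\to(4,2)$ raises it from $2+2$ to $3+2$), and there is no reason the cross-row term should always compensate. The paper's replacement for your exchange step is \emph{subadditivation}: move all of $\ell_{h_{\max}}$ into $\ell_{h_{\min}}$ at once. That this does not increase the internal part is exactly the statement that $|\Theta(L^{-1})|(n,3;\cdot)$ is subadditive $\bmod\ 3^n$, which the paper proves separately by its own induction (Theorem~7, in fact \emph{strong} subadditivity). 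Even so the cross-row and corner contributions $\Delta_E,\Delta_C$ can go up by $+1$ or $+2$, and absorbing them requires a case analysis over the nine ideals of the component $21^n\!\downarrow$ crossed with the ten values of $(s,t)$, together with the quantitative strengthening of subadditivity in Lemma~10 and its dual. This is carried out only for $m=3$; the paper leaves $m>3$ open precisely because the analogous case count becomes prohibitive. So the step you flag as ``the main obstacle'' is indeed one, and the missing idea is subadditivity of the level-$n$ $Lex$ profile, not a unit-by-unit exchange.
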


Conjecture 1 is then the special case of Conjecture 2 with $I=\emptyset=K$ and
$J=\left\{  0,1,...,m-1\right\}  $. The point is that the optimal order on
$S_{s,t}(n,m)$ is independent of $I,J,K$, even though the exterior edges of
$S_{s,t}(n,m)$ vary with $s,t$. Strengthening the inductive hypothesis is a
standard strategy when proving theorems by induction, and it is what makes
compression work in this context.

Since any permutation of $\left\{  0,1,...,m-1\right\}  $ induces a symmetry
of $S(n,m)$, from the point of view of $\left\{  h\right\}  \times
S_{s,t}(n,m) $, the exterior edges whose other ends are in $S$ may be regarded
as coming from the previous ranks (renumbered $0,1,...,s^{\prime}-1 $) and the
exterior edges whose other ends are not in $S$ may be regarded as going to the
succeeding ranks (renumbered $s^{\prime}+t^{\prime},s^{\prime}+t^{\prime
}+1,...,m-1$). It is wrt this renumbering that we define $Lex_{h} $.

\begin{theorem}
\bigskip$\forall S\subseteq V_{S_{s,t}(n+1,m)}$, with $Comp_{Lex_{h}}\left(
S\right)  =S-\left(  S\cap V_{\left\{  h\right\}  \times S_{s.t}(n,m)}\right)
+Lex_{h}^{-1}\left\{  1,2,...,\ell_{h}\right\}  $,
\end{theorem}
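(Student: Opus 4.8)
The plan is to prove that $Comp_{Lex_h}$ is a Steiner operation on $S_{s,t}(n+1,m)$, or at least that it has the two properties essential for preserving the $EIP$: $|Comp_{Lex_h}(S)|=|S|$ and $|\Theta_{s,t}(Comp_{Lex_h}(S))|\leq|\Theta_{s,t}(S)|$. Property 1 is immediate: $Comp_{Lex_h}$ deletes from $S$ the $\ell_h=|S\cap V_{\{h\}\times S_{s,t}(n,m)}|$ vertices lying in the sub-Sierpinski graph $\{h\}\times S(n,m)$ and puts back the initial $\ell_h$-segment $Lex_h^{-1}\{1,2,\dots,\ell_h\}$ of that same sub-Sierpinski graph, leaving everything outside $\{h\}\times S(n,m)$ untouched.

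For Property 2 I would run the induction on $n$ from Section 4.1, taking Conjecture 2 in dimension $n$ (for all admissible $s,t$ and all $m$) as the inductive hypothesis. Put $S_h:=S\cap V_{\{h\}\times S(n,m)}$ and, via the canonical isomorphism $hv\leftrightarrow v$, regard $S_h$ as a subset of $V_{S(n,m)}$. Split the edges of $S_{s,t}(n+1,m)$ into those with \emph{no} endpoint in $\{h\}\times S(n,m)$ and those with \emph{at least one}. Edges of the first kind --- interior edges of the other $m-1$ rows, matching edges between two other rows, and $S_{s,t}$-exterior edges at corners $k^{n+1}$ with $k\neq h$ --- are unaffected by $Comp_{Lex_h}$ and contribute the same amount to $|\Theta_{s,t}(S)|$ and to $|\Theta_{s,t}(Comp_{Lex_h}(S))|$, independently of $S_h$. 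Edges of the second kind --- the interior edges of $\{h\}\times S(n,m)$, the $m-1$ matching edges $\{hi^n,ih^n\}$ with $i\neq h$, and, when $h\in I+K$, the $S_{s,t}$-exterior edge at $h^{n+1}$ --- exhibit $\{h\}\times S(n,m)$ with these edges attached as a copy of $S_{s'',t''}(n,m)$, in which a direction $i\neq h$ joins the $I$-block or the $K$-block according as $ih^n\in S$ or $ih^n\notin S$, while direction $h$ joins the $I$-, $K$-, or $J$-block according as $h\in I$, $h\in K$, or $h\in J$. This partition of $\{0,1,\dots,m-1\}$ is exactly the reordering $\pi$ by which $Lex_h$ is defined (Section 4.3.2(3)). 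Consequently the contribution of the second kind of edges to $|\Theta_{s,t}(S)|$ equals $|\Theta_{s'',t''}(S_h)|$, and $Lex_h^{-1}\{1,\dots,\ell_h\}$ corresponds to the ordinary initial $Lex$-segment of size $\ell_h$ in $S_{s'',t''}(n,m)$. Since $s''+t''\leq m$, the inductive hypothesis gives $|\Theta_{s'',t''}(Lex^{-1}\{1,\dots,\ell_h\})|=|\Theta_{s'',t''}|(S_{s'',t''}(n,m);\ell_h)\leq|\Theta_{s'',t''}(S_h)|$, so replacing $S_h$ by its initial $Lex$-segment does not increase this contribution; together with the invariance of the first kind of edges this gives $|\Theta_{s,t}(Comp_{Lex_h}(S))|\leq|\Theta_{s,t}(S)|$.

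I expect the fussy (though routine) step to be the corner-by-corner verification that the edges of the second kind really do turn $\{h\}\times S(n,m)$ into a copy of $S_{s'',t''}(n,m)$, including the degenerate cases $s''=0$ and $t''=0$ and the consistent treatment of the single corner $h^{n+1}$, whose virtual outside end is to be counted in $S$ exactly when $h\in I$, in the complement exactly when $h\in K$, and is absent when $h\in J$. The genuine obstacle, however, is monotonicity (Property 3), the sticking point already flagged at the end of Section 4.4: the reordering $\pi=\pi(S)$ depends on $S$ through the set $\{i\neq h:ih^n\in S\}$, so for $S\subseteq T$ the order $Lex_h$ computed from $S$ may differ from the one computed from $T$, and then the initial $\ell_h(S)$-segment of the former need not lie inside the initial $\ell_h(T)$-segment of the latter --- adding an exterior neighbour of $\{h\}\times S(n,m)$ to $S$ can promote a direction from the $K$-block to the $I$-block and so reshuffle the initial segments of $Lex_h$. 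Accordingly I would state and prove the theorem with Properties 1 and 2 only, and then drive the induction for Conjecture 2 by iterating compression over the rows $h=0,1,\dots,m-1$ together with stabilization, meeting in the middle on the derived network as in Section 2.2.1, rather than through the $StOp$-order characterization of the range that Property 3 would supply; making that iteration rigorous --- or else recovering a restricted monotonicity valid on the already-stabilized, already-compressed sets actually visited by the induction --- is the work that remains.
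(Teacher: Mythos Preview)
Your proposal is correct and takes essentially the same approach as the paper: split the edges of $S_{s,t}(n+1,m)$ into those incident to row $h$ and those not, observe that $Comp_{Lex_h}$ leaves the latter unchanged and (via the inductive hypothesis, Conjecture~2 at level $n$, applied to the copy of $S_{s'',t''}(n,m)$ that row $h$ becomes once its exterior neighbours are sorted into $I_h,J_h,K_h$) minimizes the former---the paper's proof is just a terser statement of exactly this decomposition. Your anticipation that monotonicity fails and must be worked around by iterating compression with stabilization also matches the paper (Example~11, Lemma~1, Theorems~4 and~6).
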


\begin{enumerate}
\item \bigskip\ $\left\vert Comp_{Lex_{h}}\left(  S\right)  \right\vert
=\left\vert S\right\vert $ and

\item \bigskip$\left\vert \Theta_{s,t}\left(  Comp_{Lex_{h}}\left(  S\right)
\right)  \right\vert \leq\left\vert \Theta_{s,t}\left(  S\right)  \right\vert
$.
\end{enumerate}

\begin{proof}
\begin{enumerate}
\item $\left\vert Comp_{Lex_{h}}\left(  S\right)  \right\vert =\left\vert
S\right\vert -\left\vert \left(  S\cap V_{h\times S_{s,t}(n,m)}\right)
\right\vert +\left\vert Lex_{h}^{-1}\left\{  1,2,...,\ell_{h}\right\}
\right\vert $

\ \ \ \ \ \ \ \ \ \ \ \ \ \ \ \ \ \ \ \ \ \ \ $=\left\vert S\right\vert
-\ell_{h}+\ell_{h}=\left\vert S\right\vert .$

\item $\left\vert \Theta_{s,t}\left(  S\right)  \right\vert $ counts the edges
cut by $S$, some of which are interior to \linebreak$V_{\left\{  j\right\}
\times S_{s,t}(n,m)}$ and others that are exterior. $Comp_{Lex_{h}}\left(
S\right)  $ (in minimizing the number of those that are incident to
$V_{\left\{  j\right\}  \times S_{s,t}(n,m)}$ (interior and exterior) does not
increase that number and (in not changing those that are not) does not
increase that number either. Anyway, the sum of their cardinalities is not
increased so $\left\vert \Theta_{s,t}\left(  Comp_{Lex_{h}}\left(  S\right)
\right)  \right\vert \leq\left\vert \Theta_{s,t}\left(  S\right)  \right\vert
$.
\end{enumerate}
\end{proof}

Unfortunately, $Comp_{Lex_{h}}$ is not monotonic (Property 3 of a Steiner
operation) as the following example shows.

\begin{example}%
\[
\bigskip Comp_{Lex_{0}}\left(  \left\{  01^{n}\right\}  \right)  =\left\{
0^{n+1}\right\}  ,
\]
and \bigskip%
\[
Comp_{Lex_{0}}\left(  \left\{  01^{n},10^{n}\right\}  \right)  =\left\{
01^{n},10^{n}\right\}  \text{.}%
\]
So%
\[
\left\{  01^{n}\right\}  \subseteq\left\{  01^{n},10^{n}\right\}  \text{,}%
\]
but%
\[
Comp_{Lex_{0}}\left(  \left\{  01^{n}\right\}  \right)  \nsubseteq
Comp_{Lex_{0}}\left(  \left\{  01^{n},10^{n}\right\}  \right)  \text{.}%
\]

\end{example}

Thus $Comp_{Lex_{0}}$ is not a full $StOp$. This creates technical
difficulties but (as we remarked after the definition of a Steiner operation)
properties $1)$ \& $2)$ are the essential ones for solving isoperimetric
problems and technical difficulties can be overcome. We call $Comp_{Lex_{h}}$
a \textit{nonmonotone Steiner operation}\textbf{.}

As noted in the previous section, the cyclic composition of stabilizations are
eventually constant (defining $Stab_{\infty}$). In fact the cyclic
compositions of any finite set of Steiner operations, as long as they are
\textit{consistent} (their $StOp$-$Orders$ have a common total extension) will
eventually become constant functions. This "limit" of cyclic compositions is,
in category theory, the \textit{pushout} of the constituent Steiner
operations. This also applies to compression in \cite{Har04} because in
\cite{Har04}, compression is defined only for products and is a full Steiner
operation. But what about our nonmonotone compressions? Do cyclic compositions
of $Comp_{Lex_{0}}$, $Comp_{Lex_{1}}$, ...,$Comp_{Lex_{m-1}}$ eventually
become constant? This is the "technical problem" referred to earlier as caused
by the nonmonotonicity of our extended compression operation. Since the
domain, $2^{V_{G}}$, (which is also the codomain) of any $StOp$ is finite,
cyclic compositions must eventually cycle, so the question is whether that
cycle is necessarily of length $1$?

For $S\subseteq V_{S_{s,t}(n,m)}$, let
\[
\tau\left(  S;ij^{n}\right)  =\left\{
\begin{tabular}
[c]{ll}%
$0$ & if $ij^{n}\in\left(  S\cap I_{i}\right)  \cup\left(  S\cap J_{i}\right)
\cup\left(  S^{c}\cap K_{i}\right)  ,$\\
$1$ & if $ij^{n}\in\left(  S^{c}\cap I_{i}\right)  \cup\left(  S\cap
K_{i}\right)  $,
\end{tabular}
\right.
\]
and
\[
\tau\left(  S\right)  =\sum_{i,j}\tau\left(  S;ij^{n}\right)  \text{.}%
\]
Then $\tau\left(  S\right)  $ counts the number of external edges of the
subSierpinski graph, $\left\{  i\right\}  \times S(n-1,m),$ $0\leq i<m,$ cut
by $S$. Also let%
\[
\rho\left(  S;ij^{n}\right)  =\left\{
\begin{tabular}
[c]{ll}%
$0$ & if $ij^{n}\in\left(  S\cap I_{i}\right)  \cup\left(  S\cap J_{i}\right)
\cup\left(  S^{c}\cap K_{i}\right)  ,$\\
$m-j$ & if $ij^{n}\in\left(  S^{c}\cap I_{i}\right)  $\\
$j$ & if $ij^{n}\in\left(  S\cap K_{i}\right)  $.
\end{tabular}
\right.
\]
and
\[
\rho\left(  S\right)  =\sum_{i,j}\rho\left(  S;ij^{n}\right)  \text{.}%
\]
If $\forall h$, $S\cap V_{\left\{  h\right\}  \times S_{s.t}(n,m)}=Lex_{\pi
}^{-1}\left\{  1,2,...,\ell_{h}\right\}  $ for some $\pi\in\mathfrak{S}_{m}$
(which can be achieved with any $S$ by one round of $h$-compression, $0\leq
h<m$), then the number of exterior vertices in $S$, $\ell_{h}^{\prime
}=\left\vert \left\{  hj^{n}\in S\cap V_{\left\{  h\right\}  \times
S_{s.t}(n,m)}\right\}  \right\vert $, is determined by $\ell_{h}$: Given the
base $m$ representation of $\ell_{h}$,
\[
\ell_{h}=\sum_{i=1}^{n-1}\ell_{h,i}m^{n-1-i},
\]
then
\[
\ell_{h}^{\prime}=1+\max\left\{  j:j^{n-1}\leq_{Lex}\left(  \ell_{h,1}%
,\ell_{h,2},...,\ell_{h,n-1}\right)  \right\}  \text{.}%
\]
$\left\vert \Theta_{s,t}\left(  S\right)  \right\vert \geq0$ is integer valued
and nonincreasing under cyclic compression operations and so must eventually
be constant (it cannot decrease forever). Also, the number of internal edges
cut by $S$ will be constant after one cycle of compressions. Therefore the
difference, $\tau\left(  S\right)  $, must also eventually be constant.

\begin{lemma}
If $\forall h$, $S\cap V_{\left\{  h\right\}  \times S_{s.t}(n,m)}=Lex_{\pi
}^{-1}\left\{  1,2,...,\ell_{h}\right\}  $ for some $\pi\in\mathfrak{S}_{m}$,
then $\rho\left(  Comp_{Lex_{h}}\left(  S\right)  \right)  \leq\rho\left(
S\right)  $ with equality iff $Comp_{Lex_{h}}\left(  S\right)  =S$.

\begin{proof}
$S^{\prime}=Comp_{Lex_{h}}\left(  S\right)  $ alters $S$ in $\left\{
h\right\}  \times S_{s.t}(n,m)$ so as to minimize $\left\vert \Theta\left(
S^{\prime}\right)  \right\vert $ over all such alterations while maintaining
$\left\vert S^{\prime}\cap V_{\left\{  h\right\}  \times S_{s.t}%
(n,m)}\right\vert =\ell_{h}$. It is also the \textbf{unique} minimizer of
$\rho\left(  S^{\prime}\right)  $ over all such alterations. The sum for
$i\neq h$ (in the definition of $\rho\left(  S\right)  $) remains unchanged.
\end{proof}
\end{lemma}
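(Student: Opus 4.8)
The plan is to split $\rho$ according to which sub-Sierpinski copy a corner lies in. Write $\rho(S)=\rho_h(S)+\sum_{i\ne h}\rho_i(S)$, where $\rho_i(S)=\sum_j\rho(S;ij^n)$ collects the contributions of the corners $ij^n$, $j=0,1,\dots,m-1$, of the copy $\{i\}\times S_{s,t}(n,m)$. I want to show that $Comp_{Lex_h}$ leaves every $\rho_i$ with $i\ne h$ fixed and can only lower $\rho_h$, strictly unless it fixes $S$.

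The first point is that the triple $(I_h,J_h,K_h)$ — hence the reordering $\pi=I_h\,h\,K_h$ and the order $Lex_h=Lex_\pi$ defining $Comp_{Lex_h}$ — is untouched by $Comp_{Lex_h}$ itself: membership of a corner $hj^n$ in $I_h$, $J_h$ or $K_h$ is decided by the far endpoint $jh^n$ of its exterior edge (when $j\ne h$) and by the fixed $S_{s,t}$-phantom edge at $h^{n+1}$ (when $j=h$), and all of these lie outside $\{h\}\times S_{s,t}(n,m)$, the only part of $S$ that $Comp_{Lex_h}$ edits. Consequently, for an alteration $S^\dagger$ that keeps the configuration outside $\{h\}\times S_{s,t}(n,m)$ and replaces $S\cap V_{\{h\}\times S_{s,t}(n,m)}$ by an initial $Lex_{\pi'}$-segment of the same size $\ell_h$, the quantity $\rho_h(S^\dagger)$ depends only on which corners $hj^n$ it contains, with weight $m-j$ charged when $j\in I_h$ but $hj^n\notin S^\dagger$, and weight $j$ charged when $j\in K_h$ but $hj^n\in S^\dagger$. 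Since $\ell_h$ determines the number $\ell_h'$ of corners in any such $Lex_{\pi'}$-segment, minimizing $\rho_h$ forces those $\ell_h'$ corners to be the $I_h$-directions of smallest index first, then the self-corner $h^{n+1}$, then the $K_h$-directions of smallest index — which is exactly the $Lex_\pi$-segment with $\pi=I_h\,h\,K_h$, and one checks that this is the unique $Lex$-segment attaining the minimum. Hence $\rho_h(Comp_{Lex_h}(S))\le\rho_h(S)$, with equality iff $S\cap V_{\{h\}\times S_{s,t}(n,m)}$ already equals that segment, i.e. iff $Comp_{Lex_h}(S)=S$.

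It remains to show that $\sum_{i\ne h}\rho_i$ is unchanged. For $i\ne h$ no corner of $\{i\}\times S_{s,t}(n,m)$ moves into or out of $S$, so $\rho(S;ij^n)$ can change only through a change of type of the corner $ij^n$, and that can occur only for the single direction $j=h$, since the far endpoint of the exterior edge at $ih^n$ is $hi^n\in\{h\}\times S_{s,t}(n,m)$. The idea is to pair $\rho(\cdot;ih^n)$ with its companion $\rho(\cdot;hi^n)$ across the single edge $\{hi^n,ih^n\}$ and to check, over the four states of that edge, that the combined contribution of the pair is governed by whether $hi^n\in S$: it is minimized when $hi^n\in S$ exactly when direction $i$ has been renumbered into $I_h$ (i.e. when $ih^n\in S$) and minimized when $hi^n\notin S$ exactly when direction $i$ has been renumbered into $K_h$ (i.e. when $ih^n\notin S$). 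Since that is precisely how $Comp_{Lex_h}$ builds $\pi$, the compression can move $hi^n$ only in the direction that does not raise this combined contribution, and re-attributing the bookkeeping then shows $\sum_{i\ne h}\rho_i$ unchanged. Together with the previous paragraph this gives $\rho(Comp_{Lex_h}(S))\le\rho(S)$, with equality iff $Comp_{Lex_h}(S)=S$, as asserted.

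The hard part is this last step. Because $Comp_{Lex_h}$ is not monotone (Example 11), one has to exclude the scenario in which a small $\ell_h$ forces $Comp_{Lex_h}$ to drop a ``high'' $I_h$-corner of copy $h$ that was present in $S$, thereby pushing the direction-$i$ corner of copy $h$ to the wrong side of an already-fixed exterior edge and, a priori, raising $\sum_{i\ne h}\rho_i$. Ruling this out is exactly where the hypothesis that $S\cap V_{\{h\}\times S_{s,t}(n,m)}$ is an initial $Lex_\pi$-segment \emph{for every} $h$ is used: it forces the corner memberships of the neighbouring copies to be laid out compatibly with the renumbering $\pi=I_h\,h\,K_h$ that drives the compression of copy $h$, so that the offending configuration cannot arise.
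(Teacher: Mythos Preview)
Your decomposition $\rho=\rho_h+\sum_{i\ne h}\rho_i$ and the analysis of $\rho_h$ are fine, and you are right that the paper's one-line proof glosses over exactly the point you worry about. But the claim you ultimately rely on --- that $\sum_{i\ne h}\rho_i$ is \emph{unchanged} by $Comp_{Lex_h}$ --- is false, and the pairing heuristic in your third paragraph does not repair it. For a fixed $i\ne h$, the term $\rho(S;ih^n)$ depends on whether $h\in I_i$ or $h\in K_i$, which in turn depends on whether $hi^n\in S$; since $hi^n$ lives in copy $h$, compression can flip this. Concretely, take $m=4$, $h=3\in J$, $I_h=\{0\}$, $K_h=\{1,2\}$, $\ell_h'=2$, and let the $h$-slice of $S$ be the $Lex_{\pi'}$-segment with $\pi'=(1,2,0,3)$ (so $S$ contains $31^n,32^n$ but not $30^n,3^{n+1}$). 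Then $\sum_{i\ne h}\rho(S;ih^n)=3+1+1=5$, whereas after compression (which selects $30^n,3^{n+1}$) the same sum is $0$. So your assertion that ``the compression can move $hi^n$ only in the direction that does not raise this combined contribution'' is also wrong: for $i=2$ the pair contribution $\rho(\cdot;32^n)+\rho(\cdot;23^n)$ goes from $0$ to $(m-2)+h=5$.

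What actually works is to refuse to separate $\rho_h$ from the cross-terms. The terms of $\rho$ that can change under an $h$-slice alteration are exactly $\rho(\,\cdot\,;hj^n)$ for all $j$ and $\rho(\,\cdot\,;ih^n)$ for $i\ne h$; every other term is determined by data outside copy $h$. Group these as the self-corner term plus, for each $i\ne h$, the pair $\rho(\,\cdot\,;hi^n)+\rho(\,\cdot\,;ih^n)$, whose value (in your own four-state table) is $0$ if $hi^n$ matches $b_i=[ih^n\in S]$, and $(m-i)+h$ or $i+(m-h)$ otherwise. Now minimise this \emph{total} over all choices of $\ell_h'$ corners: the preference order (include $I_h$ in increasing index, then $h^{n+1}$, then $K_h$ in increasing index) is exactly $\pi=I_hJ_hK_h$, and because the penalties $(m-i)+h$ and $i+(m-h)$ are strictly monotone in $i$, the minimiser is unique among $Lex_{\pi'}$-segments. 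This gives $\rho(Comp_{Lex_h}(S))\le\rho(S)$ with equality iff $Comp_{Lex_h}(S)=S$ directly, without any claim that the $i\ne h$ part is invariant. This is essentially what the paper means by ``the unique minimizer of $\rho(S')$ over all such alterations''; its sentence about the $i\ne h$ sum should be read as referring only to the terms with both $i\ne h$ and $j\ne h$.
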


\begin{theorem}
Cyclic compositions of $Comp_{Lex_{h(\operatorname{mod}m)}}\left(  S\right)
$, $h=0,1,...$, will eventually be constant, defining a nonmonotone Steiner
operation, $Comp_{\infty}$, on $S_{s,t}(n+1,m)$.

\begin{proof}
The cyclic compositions, $Comp_{Lex_{0}}\left(  S\right)  ,Comp_{Lex_{1}%
}\left(  Comp_{Lex_{0}}\left(  S\right)  \right)  ,...$ have nonincreasing
values of $\rho$. If those values do not decrease through a full cycle of $m$
consecutive applications of $Comp_{Lex_{h}}$, then by Lemma 1
$Comp_{Lex_{h(\operatorname{mod}m)}}\left(  S^{\prime}\right)  =S^{\prime}$,
$Comp_{Lex_{h+1(\operatorname{mod}m)}}\left(  S^{\prime}\right)  =S^{\prime}%
$,..,$Comp_{Lex_{h+m-1(\operatorname{mod}m)}}\left(  S^{\prime}\right)  =$
$S^{\prime}$ and we are then repeating the same compressions that already left
$S^{\prime}$ unchanged. In that case $Comp_{\infty}\left(  S\right)
=S^{\prime}$. This must happen after some finite number of compositions
because $\rho\left(  S\right)  $ is a (finite) nonnegative integer and each
change will decrease that integer by at least $1$. Since $\rho\left(
S^{\prime}\right)  $ must remain a nonnegative integer, the number of rounds
of cyclic composition required is at most $1+m\rho\left(  S\right)  $.
\end{proof}
\end{theorem}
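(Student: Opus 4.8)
Cyclic compositions of $Comp_{Lex_{h(\operatorname{mod}m)}}$, $h=0,1,\dots$, eventually become constant, so that a well-defined (nonmonotone) Steiner operation $Comp_\infty$ on $S_{s,t}(n+1,m)$ results.

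**The plan.** The whole argument rests on exhibiting a nonnegative integer-valued \emph{potential function} on $2^{V_{S_{s,t}(n+1,m)}}$ that is nonincreasing along the sequence of cyclic compressions and strictly decreasing whenever a compression actually changes the set. The paper has already isolated the right potential: after one full round of compressions every ``row'' $S\cap V_{\{h\}\times S_{s,t}(n,m)}$ is of the form $Lex_\pi^{-1}\{1,\dots,\ell_h\}$, so from that point on we may restrict attention to such $S$, and on these the function $\rho(S)=\sum_{i,j}\rho(S;ij^n)$ is defined. Lemma~1 gives exactly what is needed: $\rho(Comp_{Lex_h}(S))\le\rho(S)$, with equality iff $Comp_{Lex_h}(S)=S$. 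So the first step of the write-up is to observe that after one initial round of the $m$ compressions the sequence lands in the domain where $\rho$ is defined and where Lemma~1 applies to \emph{every} subsequent term, and to note that $\rho$ takes only finitely many values in $\{0,1,\dots,N\}$ for an explicit bound $N$.

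**The core step.** Next I would argue that if, starting from some $S'$ in the sequence, a full cycle of $m$ consecutive compressions $Comp_{Lex_{h}},Comp_{Lex_{h+1}},\dots,Comp_{Lex_{h+m-1}}$ leaves the value of $\rho$ unchanged, then by Lemma~1 each of those $m$ compressions fixes its input, hence the composite fixes $S'$; and since $Comp_{Lex_k}$ depends only on $k\bmod m$, every further cyclic compression also fixes $S'$. Thus $S'=Comp_\infty(S)$ and the sequence is eventually constant. The only remaining point is that such a cycle must occur after finitely many steps: each time a compression strictly changes the set it strictly decreases $\rho$ by at least $1$, and $\rho\ge0$, so at most $\rho(S_0)\le N$ genuine changes can happen; spacing these among cycles of length $m$ gives the stated bound of at most $1+m\,\rho(S)$ rounds (the extra $1$ accounts for the initial round that moves us into the domain of $\rho$, plus the rounding).

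**The main obstacle.** The delicate part is not the counting argument but making sure the hypotheses of Lemma~1 are preserved along the \emph{entire} tail of the sequence: Lemma~1 is stated only for $S$ with every row already a $Lex_\pi$-initial segment, and the nonmonotonicity of $Comp_{Lex_h}$ (Example~11) means one cannot simply quote the general StOp-pushout machinery. I would therefore spell out that each $Comp_{Lex_k}$ sends a set with all rows of the form $Lex_{\pi}^{-1}\{1,\dots,\ell_j\}$ to another such set — it only rewrites row $k$, and it rewrites it to $Lex_\pi^{-1}\{1,\dots,\ell_k\}$ for the reordering $\pi=I_k\,k\,K_k$ determined by $S$ — so the domain on which $\rho$ and Lemma~1 make sense is closed under every $Comp_{Lex_k}$. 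Once that closure is in hand, the potential-function argument runs exactly as sketched above and yields the theorem together with the quantitative bound $1+m\,\rho(S)$ on the number of rounds.
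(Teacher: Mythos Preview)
Your proposal is correct and follows essentially the same approach as the paper's own proof: use $\rho$ as a nonnegative integer potential that is nonincreasing under each $Comp_{Lex_h}$ and, by Lemma~1, strictly decreasing unless the compression fixes the set; hence a full cycle with no drop in $\rho$ forces the sequence to stabilize, and the bound $1+m\,\rho(S)$ follows. Your explicit check that the domain on which Lemma~1 applies (all rows of $Lex_\pi$--initial-segment form) is closed under each $Comp_{Lex_k}$ is a point the paper only alludes to in passing, so your write-up is if anything slightly more careful.
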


\begin{corollary}
If $Comp_{\infty}\left(  S\right)  =S^{\prime}$, then $S^{\prime}$ is
$h$-compressed for every $h=0,1,...,m-1$.
\end{corollary}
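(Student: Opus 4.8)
The plan is to read the corollary straight off the Theorem just proved, by unwinding the definition of $Comp_{\infty}$. Recall that $Comp_{\infty}(S)$ is the eventual value of the sequence $S_{0}=S$, $S_{k+1}=Comp_{Lex_{k\,(\operatorname{mod}m)}}(S_{k})$; the Theorem guarantees there is an index $N$ (indeed $N\leq 1+m\rho(S)$) such that $S_{k}=S^{\prime}$ for all $k\geq N$.

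First I would fix $h\in\{0,1,\dots,m-1\}$ and pick any $k\geq N$ with $k\equiv h\pmod m$, which is possible since the residues cycle through all of $\{0,1,\dots,m-1\}$. Then $S_{k}=S^{\prime}$ and also $S_{k+1}=S^{\prime}$ (as $k+1\geq N$), while by definition $S_{k+1}=Comp_{Lex_{k\,(\operatorname{mod}m)}}(S_{k})=Comp_{Lex_{h}}(S^{\prime})$. Hence $Comp_{Lex_{h}}(S^{\prime})=S^{\prime}$, which is precisely the assertion that $S^{\prime}$ is $h$-compressed; since $h$ was arbitrary, this holds for every $h=0,1,\dots,m-1$.

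There is essentially no obstacle: the corollary is a formal consequence of the Theorem, and all the genuine work—showing the cyclic compositions actually stabilize—was already done through Lemma 1 and the monovariant $\rho$. The only bookkeeping point worth stating explicitly is an alternate route via Lemma 1 itself: since $S^{\prime}$ is the output of at least one completed cycle of compressions, each of its rows $S^{\prime}\cap V_{\{h\}\times S_{s,t}(n,m)}$ already has the form $Lex_{\pi}^{-1}\{1,2,\dots,\ell_{h}\}$, so the hypothesis of Lemma 1 is satisfied for $S^{\prime}$; then $\rho$ failing to strictly decrease across the terminal cycle forces each $Comp_{Lex_{h}}$ in that cycle to act trivially on $S^{\prime}$, giving the same conclusion and reconciling the two viewpoints.
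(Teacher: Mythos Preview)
Your proposal is correct and matches the paper's intent: the corollary is stated without proof there, as an immediate consequence of the proof of Theorem 4, which already records explicitly that once $\rho$ stops decreasing over a full cycle one has $Comp_{Lex_{h}}(S^{\prime})=S^{\prime}$ for each residue $h$. Your argument simply unwinds this, and the alternate route via Lemma 1 you sketch is exactly the mechanism used inside that proof.
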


\subsection{Stabilization Redux}

In extending the solution of the $EIP$ from $K_{m}^{n}$ to $S(n,m)$,
compression is the most essential of the three basic Steiner operations. To
make compression work we had to modify the concept of a graph. We added sets
of "external" vertices, $V_{I}=\left\{  v_{i}:i\in I\right\}  $,
$V_{K}=\left\{  v_{i}:i\in K\right\}  $, to $S(n,m)$ so that $S_{s,t}(n,m)$ is
a generalization of the subSierpinski graph, $\left\{  h\right\}  \times
S(n,m)$. So, can stabilization also be extended to $S_{s,t}(n,m)$? In terms of
the definition of stabilization in \cite{Har04}, the symmetry group of
$S_{s,t}(n,m)$ is only $\mathfrak{S}_{s}\times\mathfrak{S}_{t}\times
\mathfrak{S}_{m-s-t}$, so the number of stabilizing reflections is much less
($\binom{s}{2}+\binom{t}{2}+\binom{m-s-t}{2}$) than for $S(n,m)$ (where it is
$\binom{m}{2}$). However, every stabilization operation on $S(n,m)$ still acts
on the subsets of vertices of $S_{s,t}(n,m)$. Does that action induce a
Steiner operation?

\begin{theorem}
Any stabilization, $Stab_{i,j}:2^{V_{S(n,m)}}\rightarrow2^{V_{S(n,m)}}$, is
still a Steiner operation on $S_{s,t}(n,m)$.

\begin{proof}
Properties 1) \& 3) are not effected by the "exterior" vertices. Property 2)
could be destroyed but is not because, whatever $S$ is, if $i<j$ replacing
$j^{n}$ in $S$ by $i^{n}$ in $Stab_{i,j}\left(  S\right)  $ cannot increase
$\left\vert \Theta_{s,t}\left(  S\right)  \right\vert $: Either ($i,j\in I$ or
$J$ or $K$) or ($i\in I$ \& $j\in$ $J$ ) or ($i\in$ $I$ \& $j\in$ $K$) or
($i\in$ $J$ \& $j\in$ $K$). Whichever of the 6 possibilities is manifested,
$Stab_{i,j}\left(  S\right)  $, will not cut more exterior edges than $S$ (and
will cut fewer in the last 3 cases listed).
\end{proof}
\end{theorem}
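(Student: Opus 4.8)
Any stabilization $Stab_{i,j}$ is still a Steiner operation on $S_{s,t}(n,m)$.

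The plan is to verify the three defining properties of a Steiner operation directly, leaning on the fact that $Stab_{i,j}$ on $S(n,m)$ is already known to be a Steiner operation, so that properties 1) and 3) — which concern only cardinalities and set-containment, not the edge set — transfer verbatim. The only property that can fail is property 2), $|\Theta_{s,t}(Stab_{i,j}(S))| \le |\Theta_{s,t}(S)|$, because $\Theta_{s,t}$ counts the extra "exterior" edges that $\Theta$ on $S(n,m)$ does not see. So the heart of the argument is to show that passing from $S$ to $Stab_{i,j}(S)$ does not increase the number of exterior edges cut.

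First I would recall exactly what $Stab_{i,j}$ does on a stabilized-vs-not configuration: for $i<j$, its effect on each "column-pair" is to move membership toward the $i$-coordinate, which at the level of corner vertices means that if exactly one of $i^{n}, j^{n}$ lies in $S$ then after stabilization it is $i^{n}$ that lies in $S$ (and if both or neither lie in $S$, nothing changes). Next I would split into the six cases according to which of the classes $I$, $J$, $K$ the indices $i$ and $j$ fall into, as the proof sketch indicates: $\{i,j\}\subseteq I$, $\subseteq J$, $\subseteq K$, or $(i\in I, j\in J)$, $(i\in I, j\in K)$, $(i\in J, j\in K)$ — note the ordering $i<j$ forces $i$ to be in the "lower" class, so these six are exhaustive. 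In each case I would check that the exterior-edge count $\tau$-type contribution (using the convention that a $v_i$ with $i\in I$ is treated as in $S$ and a $v_i$ with $i\in K$ as in $S^{c}$) is not increased; the cleanest bookkeeping is to compare, edge by edge, the exterior edge at $i^{n}$ and the exterior edge at $j^{n}$ before and after, since those are the only exterior edges whose status can change. For the three "mixed-class" cases $(I,J)$, $(I,K)$, $(J,K)$ one gets a strict decrease in exterior edges cut (moving a $J$- or $K$-corner's membership into agreement with the $I$-side, or a $K$-corner's into agreement with the $J$-side, un-cuts an exterior edge), and for the three "same-class" cases the exterior edges at $i^n$ and $j^n$ are either both phantom-present (class $I$), both phantom-absent (class $K$), or both absent (class $J$), so swapping membership between $i^n$ and $j^n$ leaves the exterior count unchanged while the interior count cannot increase by the known $S(n,m)$ result.

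I would then assemble these into the inequality $|\Theta_{s,t}(Stab_{i,j}(S))| \le |\Theta_{s,t}(S)|$ by writing $|\Theta_{s,t}(S)| = |\Theta(S)| + (\text{exterior edges cut by }S)$ — where $\Theta$ is the ordinary edge-boundary in $S(n,m)$ — and observing both summands are non-increasing under $Stab_{i,j}$: the first by the already-established Steiner property on $S(n,m)$, the second by the case analysis just described. I expect the main obstacle to be purely bookkeeping: being careful that $Stab_{i,j}$ may also act nontrivially on non-corner vertices (it is a reflection across a hyperplane, not merely a corner swap), so one must confirm that those interior reassignments are exactly the ones governed by the $S(n,m)$ Steiner property and contribute nothing new to the exterior count — which is immediate since exterior edges are incident only to corner vertices $i^{n}$. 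Once that separation of interior and exterior contributions is made explicit, the six-case check is routine and the theorem follows.
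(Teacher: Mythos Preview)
Your proposal is correct and follows essentially the same approach as the paper: properties 1) and 3) carry over immediately, and for property 2) you perform the same six-case analysis on the pair $(i,j)$ according to membership in $I$, $J$, $K$, with the same conclusion (no increase in exterior edges cut in the same-class cases, a decrease in the mixed cases when a swap actually occurs). Your write-up is in fact more explicit than the paper's---the decomposition $|\Theta_{s,t}(S)| = |\Theta(S)| + (\text{exterior edges cut})$ and the observation that only the corner vertices $i^{n}, j^{n}$ can change membership among corners (since all other $k^{n}$ are fixed by the reflection) make the bookkeeping transparent---but the underlying argument is the same.
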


\begin{corollary}%
\[
Stab_{i,j}(Lex_{\pi}^{-1}\left(  \left\{  1,2,...,\ell\right\}  \right)
)=\left\{
\begin{array}
[c]{ll}%
Lex_{\pi}^{-1}\left(  \left\{  1,2,...,\ell\right\}  \right)  & \text{if }%
\pi\left(  i\right)  <\pi\left(  j\right)  ,\\
Lex_{\pi\circ\left(  ij\right)  }^{-1}\left(  \left\{  1,2,...,\ell\right\}
\right)  & \text{if }\pi\left(  i\right)  >\pi\left(  j\right)  .
\end{array}
\right.
\]

\begin{proof}
This is the extension of 4.3.2(3) to $S_{s,t}(n,m)$.
\end{proof}
\end{corollary}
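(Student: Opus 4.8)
The plan is to obtain Corollary 3 directly from the third of the Three Key Observations (Section 4.3.2, item (3)), which is precisely this identity for the graph $S(n,m)$, by checking that the passage to the enlarged object $S_{s,t}(n,m)$ changes nothing the identity depends on. The two objects appearing in the formula are the operator $Stab_{i,j}$ and the subset $Lex_\pi^{-1}(\{1,2,\ldots,\ell\})$, and both are defined purely from $V_{S(n,m)} = \{0,1,\ldots,m-1\}^n$ together with the linear coordinates of Section 3.1.3: $Stab_{i,j}$ is the stabilization attached to the reflection of $\mathbb{R}^m$ exchanging the $i$th and $j$th axes, and $Lex_\pi^{-1}(\{1,\ldots,\ell\})$ is the initial $\ell$-segment of the order $Lex_\pi$. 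Since $V_{S_{s,t}(n,m)} = V_{S(n,m)}$, and since — as recorded in the proof of Theorem 9 — the move to $S_{s,t}(n,m)$ leaves the set-map $Stab_{i,j}$ completely unaltered (in contrast to $Comp_{Lex_h}$, which had to be amended for the exterior edges) and leaves $Lex_\pi$ unaltered as well, both sides of the asserted equation denote the same subsets of $V_{S_{s,t}(n,m)}$ that they denote in $S(n,m)$. Theorem 9 is invoked only to guarantee that $Stab_{i,j}$ remains a legitimate (monotone) Steiner operation in the new setting, so that the statement has content there.

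To make the inheritance watertight I would re-examine the argument behind 4.3.2(3) and confirm that it uses only the coordinate structure of $V$ and the involution $r_{ij} = (ij)$ acting coordinatewise — never the edge set, never the designation of exterior vertices. Concretely: $V$ decomposes into orbits of $r_{ij}$, of size $1$ or $2$; $Stab_{i,j}$ fixes every orbit pointwise except that on a size-$2$ orbit $\{v, r_{ij}(v)\}$ meeting $S$ in exactly one point it retains the member on the $i$-side of the reflecting hyperplane, which (because $Lex^{-1}$-segments are stable, 4.3.2(3)) is the $Lex$-smaller of the two. A one-line check from the linear coordinates shows that when $\pi(i) < \pi(j)$ this distinguished representative is also the $Lex_\pi$-smaller of the two, so an initial $Lex_\pi$-segment automatically contains it whenever it meets the orbit and $Stab_{i,j}$ fixes $Lex_\pi^{-1}(\{1,\ldots,\ell\})$; whereas when $\pi(i) > \pi(j)$ it is the $Lex_\pi$-larger one, so $Stab_{i,j}$ swaps the selected member in every size-$2$ orbit, and since interchanging the labels $i$ and $j$ throughout carries $Lex_\pi$ to $Lex_{\pi\circ(ij)}$ (i.e. $Lex_\pi(r_{ij}(w)) = Lex_{\pi\circ(ij)}(w)$), the resulting set is exactly $Lex_{\pi\circ(ij)}^{-1}(\{1,\ldots,\ell\})$. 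None of this mentions edges, so it transfers verbatim.

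There is no genuine obstacle here; the statement is a bookkeeping corollary. The only point worth care — and the reason it is attached to Theorem 9 rather than cited straight from 4.3.2(3) — is to make explicit that, unlike the compression operations of Section 4.4, stabilization needs no modification on $S_{s,t}(n,m)$, so that ``$Stab_{i,j}$'' names the same function before and after the enlargement and the computation of 4.3.2(3) applies unchanged.
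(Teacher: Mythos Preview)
Your approach is correct and is exactly the paper's: both reduce the corollary to observation 4.3.2(3), noting that on $S_{s,t}(n,m)$ the vertex set, the set-map $Stab_{i,j}$, and the orders $Lex_\pi$ are literally the same as on $S(n,m)$, so the identity carries over unchanged. (Minor bookkeeping: the theorem you cite as ``Theorem 9'' is Theorem 5 in the paper's numbering --- the result immediately preceding this corollary.)
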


\begin{corollary}
$Lex^{-1}\left(  \left\{  1,2,...,\ell\right\}  \right)  $ is stable (wrt all
$i<j$).
\end{corollary}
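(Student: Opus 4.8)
The plan is to obtain this as an immediate specialization of the preceding corollary, the $S_{s,t}(n,m)$-analogue of observation 4.3.2(3). First I would recall that the standard lexicographic order is $Lex = Lex_{\iota}$, where $\iota\in\mathfrak{S}_{m}$ is the identity permutation and $Lex_{\pi}(v) = 1+\sum_{i=1}^{n}v_{\pi(i)}m^{n-i}$; indeed $Lex_{\iota}(v) = 1+\sum_{i=1}^{n}v_{i}m^{n-i} = Lex(v)$, so the two orders literally coincide.

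Next I would fix an arbitrary pair $i<j$ in $\{0,1,...,m-1\}$ and substitute $\pi=\iota$ into the piecewise formula of the preceding corollary. Since $\iota$ is the identity, $\iota(i)=i<j=\iota(j)$, so we always land in the first branch, which gives $Stab_{i,j}(Lex^{-1}(\{1,2,...,\ell\})) = Lex_{\iota}^{-1}(\{1,2,...,\ell\}) = Lex^{-1}(\{1,2,...,\ell\})$. As $i<j$ was arbitrary, the initial $\ell$-segment of $Lex$ is fixed by every stabilization $Stab_{i,j}$ with $i<j$, which is by definition what it means for it to be stable; that finishes the proof.

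There is no genuine obstacle left at this stage: all of the work has already been done, first in the theorem that each $Stab_{i,j}$ remains a Steiner operation on $S_{s,t}(n,m)$ (where the six-case check on the ``exterior'' vertices is carried out) and then in the corollary recording the explicit effect of $Stab_{i,j}$ on $Lex_{\pi}$-segments. The only things to check here are the bookkeeping identities $Lex=Lex_{\iota}$ and $\iota(i)<\iota(j)$ for $i<j$, both immediate. If one preferred an argument not routed through that corollary, the substantive content would be that an initial segment of lexicographic order is already ``sorted'' coordinate by coordinate --- among the prefixes it realizes, smaller symbols dominate --- so that by observations 4.3.2(1)--(2) no stabilization can raise the vector $\boldsymbol{\ell}(S)$ and hence none can disturb the already-stable set; but that is exactly the content re-established by the results immediately preceding, so routing through the corollary is the efficient path.
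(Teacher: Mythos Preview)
Your proof is correct and matches the paper's approach exactly: the paper leaves this corollary without an explicit proof precisely because it is the immediate specialization $\pi=\iota$ of the preceding corollary, which is what you do. Your observation that $Lex=Lex_{\iota}$ and $\iota(i)<\iota(j)$ for $i<j$ is all that is needed.
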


\begin{corollary}
The stabilization-order of $S_{s,t}(n,m)$ is the same as that of $S(n,m)$.
\end{corollary}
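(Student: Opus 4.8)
The plan is to reduce the statement to the observation that the basic stabilization operators are, as set-maps $2^{V}\to 2^{V}$ with $V=\left\{0,1,\dots,m-1\right\}^{n}$, literally unchanged when we pass from $S(n,m)$ to $S_{s,t}(n,m)$, together with the fact (from \cite{Har11}, recalled in Section 4.2) that the $StOp$-order of a Steiner operation is recovered from the operation itself --- indeed from its range, i.e. the family of sets it fixes. Concretely, recall that $\mathcal{S}$-$\mathcal{O}\left(S(n,m)\right)$ is built from $Stab_{\infty}$, the eventually constant cyclic composition of the basic stabilizations $Stab_{i,i+1}$, $0\leq i<m-1$, attached to the reflections bounding the fundamental chamber $C_{0}$; its range is exactly the family of $S\subseteq V$ with $Stab_{i,i+1}(S)=S$ for every $i$, and the stabilization-order is the (finite) poset whose ideals are precisely those sets, equivalently $x\leq y$ iff every such $S$ containing $y$ contains $x$. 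The first step, then, is to note that each $Stab_{i,i+1}$ acts by the Coxeter recipe on coordinate vectors described just after Figure 1: within each reflection-pair $\left\{v,R_{i,i+1}v\right\}$ that $S$ meets in exactly one element, it replaces that element by the one whose first coordinate lying in $\left\{i,i+1\right\}$ equals $i$. This recipe refers only to the involution $R_{i,i+1}$ of $V$ and the interior edge-structure of $S(n,m)$, which is common to $S(n,m)$ and $S_{s,t}(n,m)$; the "exterior" vertices and edges of $S_{s,t}(n,m)$ enter nowhere in it. Hence $Stab_{i,i+1}$ denotes one and the same function on $2^{V}$ in both settings.

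The second step is to invoke the preceding Theorem: this same function is a genuine Steiner operation on $S_{s,t}(n,m)$, and --- either by the remark in Section 4.3 that a consistent finite family of Steiner operations has a pushout, or simply because the cyclic compositions in question are the very ones already known to stabilize for $S(n,m)$ --- the cyclic compositions $Stab_{0\,1},Stab_{1\,2}\circ Stab_{0\,1},\dots$ are eventually constant, so $Stab_{\infty}$ is well defined on $S_{s,t}(n,m)$ and equals, as a map $2^{V}\to 2^{V}$, the $Stab_{\infty}$ of $S(n,m)$. Two Steiner operations that coincide as functions have the same range, and by the characterization of Section 4.2 they therefore have the same $StOp$-order; thus $\mathcal{S}$-$\mathcal{O}\left(S_{s,t}(n,m)\right)=\mathcal{S}$-$\mathcal{O}\left(S(n,m)\right)$. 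As a consistency check, the preceding Corollary asserts that $Lex^{-1}\left(\left\{1,2,\dots,\ell\right\}\right)$ is fixed by every $Stab_{i,i+1}$ in both settings, exactly as it should be for these sets to be ideals of the common order.

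The single point that needs care --- and where the preceding Theorem does the real work --- is the claim in the first step that the \emph{definition} of $Stab_{i,i+1}$ is insensitive to the added exterior edges. A priori this is not obvious: stabilization within a symmetric pair is constrained to respect edge-boundary (Property 2), and the exterior edges of $S_{s,t}(n,m)$ genuinely alter the edge-boundary contributions of the two elements of a pair. What rescues the argument is precisely the six-case analysis in the proof of the preceding Theorem, showing that for $i<j$ the move replacing $j^{n}$ by $i^{n}$ never increases the number of exterior edges cut; so the Coxeter preference order --- which orients every reflection-pair toward $C_{0}$ --- remains the correct, edge-boundary-nonincreasing choice for $S_{s,t}(n,m)$ as well, and the operator, recipe and all, is unchanged. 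Once that is in hand the remainder is bookkeeping about $StOp$-orders and their ranges, with no further calculation required.
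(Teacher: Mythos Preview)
Your argument is correct and matches the paper's intent. The paper states this corollary without proof, leaving it implicit that since the operations $Stab_{i,j}$ are literally the same functions on $2^{V}$ (the paper explicitly carries over the $S(n,m)$-stabilizations rather than redefining them intrinsically to $S_{s,t}(n,m)$), and since Theorem~5 certifies they remain Steiner operations there, the range---and hence the $StOp$-order---is unchanged. Your third paragraph is slightly over-cautious: in the paper's framing the operation is transported wholesale from $S(n,m)$, so its definition is unchanged \emph{by fiat}; Theorem~5 is needed not to pin down the recipe but only to verify Property~2 with respect to $\Theta_{s,t}$, which is exactly what you end up saying.
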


After compressing $S\subseteq V_{S_{s,t}(n+1,m)}$ in every subSierpinski
graph, $\left\{  h\right\}  \times S_{s.t}(n,m)$ with $0\leq h<m$, $S$ need
not be stable. If we then apply the (extended) stabilization, $Stab_{i,j}$,
will $Stab_{i,j}(S)$ still be compressed?

\begin{lemma}
If $S\subseteq V_{S_{s,t}(n+1,m)}$ is $h$-compressed (\textit{i.e.
}$Comp_{Lex_{h}}\left(  S\right)  =S$), then either

\begin{enumerate}
\item $\boldsymbol{\ell}\left(  Stab_{i,j}(S)\right)  >\boldsymbol{\ell
}\left(  S\right)  $ or

\item $Stab_{i,j}(S)$ is $h$-compressed.
\end{enumerate}

\begin{proof}
In calculating $Comp_{Lex_{h}}\left(  S\right)  $ we first identify
\begin{align*}
I_{h}  &  =\left\{  i\in\left\{  0,1,...,m-1\right\}  :\left(  \left(  i\neq
h\right)  \&\left(  ih^{n}\in S\right)  \right)  \text{ or}\ \left(  \left(
i=h\right)  \&\left(  h\in I\right)  \right)  \right\}  ,\\
K_{h}  &  =\left\{  i\in\left\{  0,1,...,m-1\right\}  :\left(  i\neq h\right)
\&\left(  ih^{n}\notin S\right)  \text{ or }\left(  \left(  i=h\right)
\&\left(  h\in K\right)  \right)  \right\}  ,\\
J_{h}  &  =\left\{  0,1,...,m-1\right\}  -I_{h}-K_{h}\text{.}%
\end{align*}
(Note that $J_{h}=\emptyset$ unless $h\in J$ which means that $J_{h}=\left\{
h\right\}  )$. So $I_{h}J_{h}K_{h}=\pi\in\mathfrak{S}_{m}$ defines $Lex_{h}$,
the lexicographic order on $V_{S_{s,t}(n+1,m)}$ with components in the order
$I_{h}J_{h}K_{h}$. Now $\boldsymbol{\ell}\left(  Stab_{i,j}(S)\right)
\geq\boldsymbol{\ell}\left(  S\right)  $, so either $\boldsymbol{\ell}\left(
Stab_{i,j}(S)\right)  >\boldsymbol{\ell}\left(  S\right)  $ or
$\boldsymbol{\ell}\left(  Stab_{i,j}(S)\right)  =\boldsymbol{\ell}\left(
S\right)  $. The latter implies that $\ell_{h}\left(  Stab_{i,j}(S)\right)
=\ell_{h}\left(  S\right)  $ for $0\leq h<m$ which means that $Stab_{i,j}$
maps $S\cap\left(  \left\{  h\right\}  \times S_{s,t}(n,m)\right)  $ into
$\left\{  h\right\}  \times S_{s,t}(n,m)$. Since $S$ is $h$-compressed,
$S\cap\left(  \left\{  h\right\}  \times S_{s,t}(n,m)\right)  =Lex_{\pi}%
^{-1}\left(  \left\{  1,2,...,\ell_{h}\right\}  \right)  $ and by Corollary 2
\[
Stab_{i,j}(S)\cap\left(  \left\{  h\right\}  \times S_{s,t}(n,m)\right)
=\left\{
\begin{array}
[c]{ll}%
Lex_{\pi}^{-1}\left(  \left\{  1,2,...,\ell_{h}\right\}  \right)  & \text{if
}\pi\left(  i\right)  <\pi\left(  j\right) \\
Lex_{\pi\circ\left(  ij\right)  }^{-1}\left(  \left\{  1,2,...,\ell
_{h}\right\}  \right)  & \text{if }\pi\left(  i\right)  >\pi\left(  j\right)
\end{array}
\right.  \text{.}%
\]
In either case, $Stab_{i,j}(S)$ is $h$-compressed.
\end{proof}
\end{lemma}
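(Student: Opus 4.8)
The plan is to run a dichotomy on the effect of $Stab_{i,j}$ on the profile vector $\boldsymbol{\ell}$, exactly as one does when showing that stabilization and compression are mutually compatible on $K_m^n$. The first input is the level-$(n+1)$ version of the first of the Three Key Observations, $\boldsymbol{\ell}(Stab_{i,j}(S))\geq\boldsymbol{\ell}(S)$ in the lexicographic order on $m$-tuples. One has to note that this still holds for $S_{s,t}(n+1,m)$ carrying its exterior edges, not just for $S(n+1,m)$; but Section 4.5 establishes that $Stab_{i,j}$ remains a Steiner operation on $S_{s,t}(n+1,m)$ with the same $StOp$-order, and the monotone-rearrangement argument underlying 4.3.2(1) then transfers verbatim. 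Hence either $\boldsymbol{\ell}(Stab_{i,j}(S))>\boldsymbol{\ell}(S)$, which is alternative (1) and we are done, or $\boldsymbol{\ell}(Stab_{i,j}(S))=\boldsymbol{\ell}(S)$.

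In the equality case $\ell_h$ is preserved for every $h$, and the heart of the argument is to show that $Stab_{i,j}$ then moves no vertex between rows. A vertex whose first coordinate lies outside $\{i,j\}$ is fixed there by the reflection $\sigma_{ij}$, so only rows $i$ and $j$ can interact, and any $\sigma_{ij}$-orbit touching them straddles the two. On such an orbit stabilization keeps the surviving $S$-element on the side $\{y_i\geq y_j\}$, and that side is the row-$i$ side, because in the Euclidean coordinates of Section 3.1.3 the weight attached to the first coordinate exceeds the combined weight of all later coordinates, so the first coordinate alone decides the comparison of $y_i$ with $y_j$ — the same coordinate in which $Lex$ is most significant. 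Consequently the only way $Stab_{i,j}$ could alter $\boldsymbol{\ell}$ on rows $i$ and $j$ is by transporting an $S$-element from row $j$ to row $i$, which strictly raises $\ell_i$; equality rules this out, so no straddling orbit is split by $S$, $Stab_{i,j}$ fixes every row-block setwise, and in particular it fixes the exterior-neighbour vertices $ah^n$ that determine the index sets $I_h,J_h,K_h$.

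With $Stab_{i,j}$ acting inside each row, I would finish row by row. For $h\notin\{i,j\}$ the restriction is an internal stabilization of $S_{s,t}(n,m)$, and since $S$ is $h$-compressed — so $S\cap V_{\{h\}\times S_{s,t}(n,m)}=Lex_\pi^{-1}(\{1,\dots,\ell_h\})$ with $\pi=I_hJ_hK_h$ — Corollary 2 sends this slice to $Lex_\pi^{-1}(\{1,\dots,\ell_h\})$ or $Lex_{\pi\circ(ij)}^{-1}(\{1,\dots,\ell_h\})$, an initial $\ell_h$-segment of a $Lex_\sigma$ in either case; for $h\in\{i,j\}$ the slice is literally unchanged, since no straddling orbit was split. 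Because the exterior neighbours, and hence $I_h,J_h,K_h$, are carried along by $Stab_{i,j}$ in step with the slice, the resulting $Lex_\sigma$ is precisely the order that $Comp_{Lex_h}$ reads off $Stab_{i,j}(S)$, so $Comp_{Lex_h}(Stab_{i,j}(S))=Stab_{i,j}(S)$ and alternative (2) holds.

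The main obstacle I expect is the row-non-crossing step in the equality case, and in particular the claim that the $C_0$-side of a first-coordinate-straddling $\sigma_{ij}$-orbit is settled by the first coordinate alone: this has to be pulled out of the explicit linear coordinates, checking the weight inequality there and confirming it is consistent with the normalization used to define stabilization. A second delicate point is the case $h\in\{i,j\}$, where ``$Stab_{i,j}$ restricted to row $h$'' is not an internal stabilization on $S_{s,t}(n,m)$ but a genuine cross-row rearrangement, so one must argue directly — rather than by quoting Corollary 2 — that equality forces that slice, together with its exterior-neighbour data, to be left exactly as it was.
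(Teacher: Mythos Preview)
Your proposal is correct and follows essentially the same approach as the paper: invoke the dichotomy $\boldsymbol{\ell}(Stab_{i,j}(S))\geq\boldsymbol{\ell}(S)$, and in the equality case observe that $Stab_{i,j}$ then acts row by row, so Corollary~2 applies to each $h$-slice. You are considerably more careful than the paper's one-paragraph argument --- separating out the cases $h\in\{i,j\}$ (slice literally unchanged) versus $h\notin\{i,j\}$ (internal stabilization), justifying row-preservation via the linear coordinates, and flagging that the exterior-neighbour data $I_h,J_h,K_h$ must move in step with the slice --- but these are elaborations of the same skeleton rather than a different route.
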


\begin{theorem}
If $S\subseteq V_{S_{s,t}(n+1,m)}$ is compressed ($h$-compressed for
$h=0,1,...,m-1$), then either

\begin{enumerate}
\item $\boldsymbol{\ell}\left(  Stab_{\infty}(S)\right)  >\boldsymbol{\ell
}\left(  S\right)  $ or

\item $Stab_{\infty}(S)$ is compressed.
\end{enumerate}

\begin{proof}
Apply stabilizations $Stab_{i,j}$ cyclically. Eventually the composition are
constant, defining $Stab_{\infty}$. Since stabilization does not decrease
$\boldsymbol{\ell}\left(  S\right)  $, either some $Stab_{i,j}$ increases it
or it remains constant ($=\boldsymbol{\ell}\left(  S\right)  $) until
$Stab_{\infty}(S)$ is reached. In the latter case, Lemma 2 says that
$Stab_{\infty}(S)$ is $h$-compressed for $h=0,1,...,m-1$.
\end{proof}
\end{theorem}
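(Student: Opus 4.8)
The plan is to reduce this theorem to the preceding Lemma~2 by a clean bookkeeping argument on the potential function $\boldsymbol{\ell}(S)$. First I would recall the two facts established earlier that do all the work: (i) Theorem~6 says each individual $Stab_{i,j}$ is a Steiner operation on $S_{s,t}(n+1,m)$, and in particular Key Observation~(1) from Section 4.3.2 gives $\boldsymbol{\ell}(Stab_{i,j}(S)) \geq \boldsymbol{\ell}(S)$ in the lexicographic order on $m$-tuples; (ii) Lemma~2 says that if $S$ is $h$-compressed then applying any single $Stab_{i,j}$ either strictly increases $\boldsymbol{\ell}$ or leaves $Stab_{i,j}(S)$ still $h$-compressed. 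The stabilization limit $Stab_\infty$ is obtained (as in Section~4.3, following \cite{Har04}, Chapters 3 \& 6) as the eventually-constant value of cyclic compositions $Stab_{i_1,j_1}, Stab_{i_2,j_2},\ldots$ running through all generator pairs repeatedly.

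Next I would run the dichotomy along the cyclic composition. Start from a compressed $S = S^{(0)}$ and form $S^{(1)}, S^{(2)},\ldots$ by applying the generators $Stab_{i_k,j_k}$ in cyclic order, so that $S^{(N)} = Stab_\infty(S)$ for $N$ large. By fact~(i) the sequence $\boldsymbol{\ell}(S^{(0)}) \leq \boldsymbol{\ell}(S^{(1)}) \leq \cdots$ is nondecreasing. Case~1: some step strictly increases $\boldsymbol{\ell}$, i.e. $\boldsymbol{\ell}(S^{(k+1)}) > \boldsymbol{\ell}(S^{(k)})$ for some $k<N$; then since the sequence never decreases, $\boldsymbol{\ell}(Stab_\infty(S)) = \boldsymbol{\ell}(S^{(N)}) \geq \boldsymbol{\ell}(S^{(k+1)}) > \boldsymbol{\ell}(S^{(0)}) = \boldsymbol{\ell}(S)$, which is conclusion~(1). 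Case~2: no step strictly increases $\boldsymbol{\ell}$, so $\boldsymbol{\ell}(S^{(k)}) = \boldsymbol{\ell}(S)$ for all $k$. I would then prove by induction on $k$ that each $S^{(k)}$ is $h$-compressed for every $h=0,1,\ldots,m-1$: the base case $k=0$ is the hypothesis, and for the inductive step, $S^{(k)}$ is $h$-compressed for all $h$ and $\boldsymbol{\ell}(S^{(k+1)}) = \boldsymbol{\ell}(S^{(k)})$, so Lemma~2 (applied once for each $h$, with the same generator $Stab_{i_{k},j_{k}}$) forces alternative~(2) of that lemma, namely $S^{(k+1)} = Stab_{i_k,j_k}(S^{(k)})$ is $h$-compressed for each $h$. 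Taking $k=N$ gives that $Stab_\infty(S) = S^{(N)}$ is compressed, which is conclusion~(2).

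The only subtlety — and the place I would be most careful — is the quantifier interplay in Case~2: Lemma~2 is stated for a fixed $h$, so I need the equality $\boldsymbol{\ell}(S^{(k+1)}) = \boldsymbol{\ell}(S^{(k)})$ to license the second alternative \emph{simultaneously} for all $h$, and I need to know the lexicographic order on $\boldsymbol{\ell}$-tuples is a genuine total order so that ``nondecreasing and not strictly increasing anywhere'' really does collapse to ``constant.'' Both are immediate from the setup (the $m$-tuples are totally ordered lexicographically by construction, and the equality of the tuples entails equality of every coordinate $\ell_h$), so there is no real obstacle; the argument is a finite-descent/monotone-sequence packaging of Lemma~2. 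I would also note explicitly, for completeness, that $Stab_\infty$ is well-defined here by exactly the same finiteness argument used for $Comp_\infty$ in Theorem~2 — the codomain $2^{V_{S_{s,t}(n+1,m)}}$ is finite and $\boldsymbol{\ell}$ is a monotone (nondecreasing) integer-tuple valued potential that is bounded above, so cyclic compositions of the finitely many consistent stabilizations must stabilize — and this is precisely the ``limit of cyclic compositions = pushout'' phenomenon invoked in Section~4.3.
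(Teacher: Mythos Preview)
Your argument is correct and matches the paper's proof essentially line for line: cyclically apply the basic stabilizations, use the nondecrease of $\boldsymbol{\ell}$ to split into the two cases, and in the constant-$\boldsymbol{\ell}$ case invoke Lemma~2 inductively along the sequence to preserve $h$-compression for every $h$. Your version is in fact more carefully written than the paper's (which leaves the induction on $k$ implicit); the only slips are numbering---the result that each $Stab_{i,j}$ is a Steiner operation on $S_{s,t}(n+1,m)$ is Theorem~5, not Theorem~6, and the $Comp_\infty$ convergence is Theorem~4, not Theorem~2.
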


\subsection{Subadditivity}

\begin{definition}
A function, $f:\mathbb{Z}_{N}\rightarrow\mathbb{Z}$, is called
\textit{subadditive }$(\operatorname{mod}N)$ if

\begin{enumerate}
\item $f(0)=0$,

\item $\forall x,y\in\mathbb{Z}_{N}$, $f(\left(  x+y\right)  \left(
\operatorname{mod}N\right)  )\leq f(x)+f(y)$.
\end{enumerate}
\end{definition}

\begin{example}
Bernstein's Lemma (Lemma 1.3 of \cite{Har04}) is equivalent to \linebreak%
$\left\vert \Theta\left(  Lex^{-1}\left\{  1,2,...,\ell\right\}  \right)
\right\vert $ on $Q_{n}$ being subadditive, \textit{i.e.} that
\end{example}

\begin{enumerate}
\item $\left\vert \Theta\right\vert \left(  Q_{n};0\right)  =0$,

\item $\forall k,\ell\in\mathbb{Z}_{2^{n}}$, $\left\vert \Theta\right\vert
\left(  Q_{n};\left(  k+\ell\right)  \operatorname{mod}2^{n}\right)
\leq\left\vert \Theta\right\vert \left(  Q_{n};k\right)  +\left\vert
\Theta\right\vert \left(  Q_{n};\ell\right)  $.
\end{enumerate}

Note that if $x$ is $0$ then $f(x+y)=f(0+y)=f(y)=0+f(y)=$ $f(x)+f(y)$ and
similarly if $y=0$.

A subadditive function, $f:\mathbb{Z}_{N}\rightarrow\mathbb{Z}$, is called
\textit{strongly subadditive} if
\[
\forall x,y\neq0\text{, }f(\left(  x+y\right)  \left(  \operatorname{mod}%
N\right)  )<f(x)+f(y)\text{.}%
\]

From this point on, due to the complexity of our proof technique for $m>3$, we
must restrict some theorems to $m=3$. Our goal is to prove that
\[
\left\vert \Theta\left(  Lex^{-1}\left(  \left\{  1,2,...,\ell\right\}
\right)  \right)  \right\vert :\mathbb{Z}_{m^{n}}\rightarrow\mathbb{Z}_{m^{n}}%
\]
is the edge-isoperimetric profile of $S(m,n)$. We cannot yet assert that
\[
\left\vert \Theta\right\vert \left(  S(m,n);\ell\right)  =\left\vert
\Theta\left(  Lex^{-1}\left(  \left\{  1,2,...,\ell\right\}  \right)  \right)
\right\vert
\]
so we need a notation for the latter that is more compact yet fully
descriptive. To this end we let
\[
\left\vert \Theta\left(  L^{-1}\right)  \right\vert \left(  n,m;\ell\right)
=\left\vert \Theta\left(  Lex^{-1}\left(  \left\{  1,2,...,\ell\right\}
\right)  \right)  \right\vert
\]

\[%
%TCIMACRO{\FRAME{itbpFU}{4.5455in}{4.8966in}{0in}{\Qcb{Figure 7-Graphs of
%$\left\vert \Theta\left(  L^-1\right)  \right\vert \left(  n,3;\ell\right)  $,
%$n=1,2,3$.}}{}{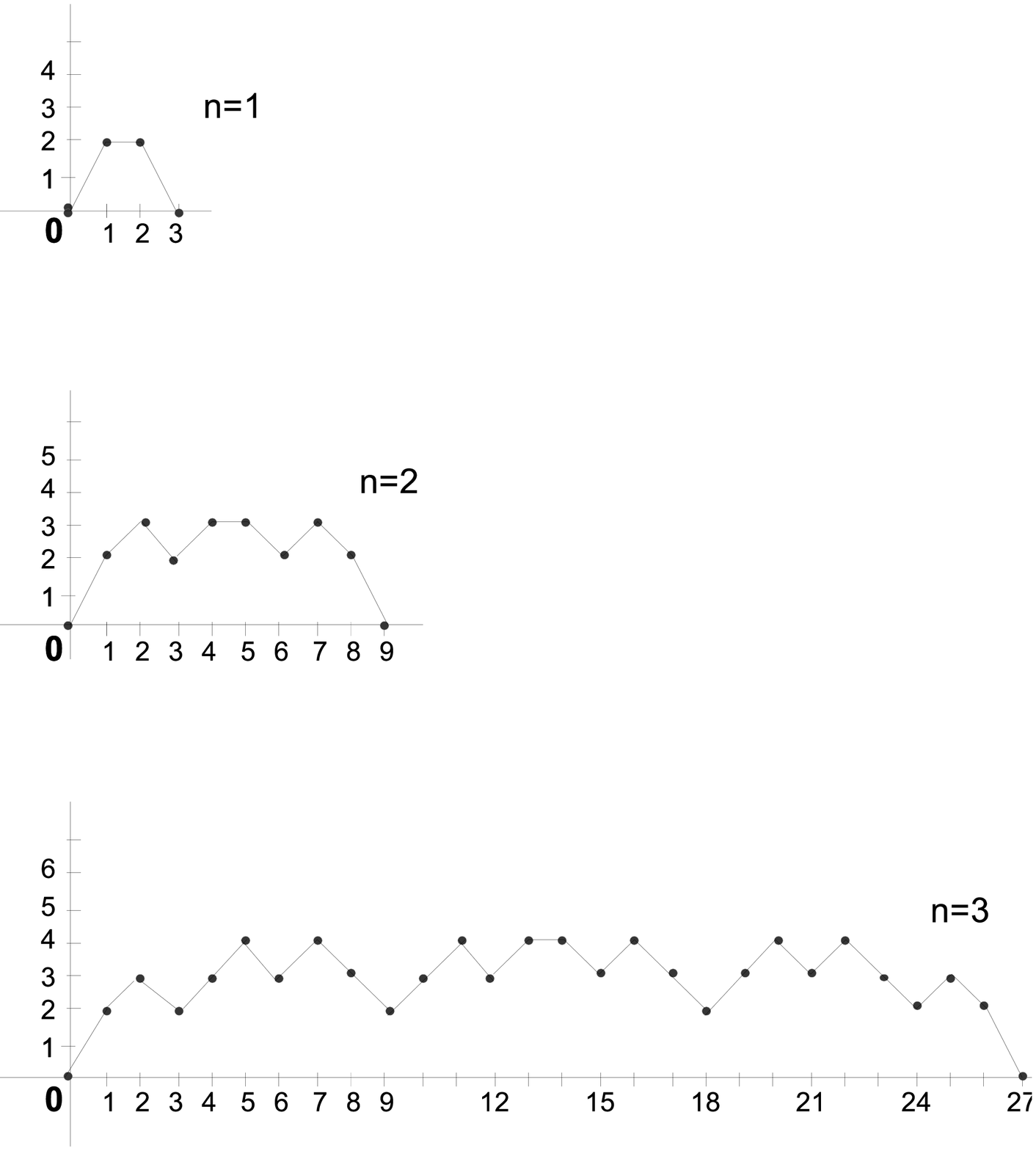}{\special{ language "Scientific Word";
%type "GRAPHIC";  maintain-aspect-ratio TRUE;  display "USEDEF";
%valid_file "F";  width 4.5455in;  height 4.8966in;  depth 0in;
%original-width 8.4968in;  original-height 11.0056in;  cropleft "0";
%croptop "0.9438";  cropright "0.9667";  cropbottom "0.1394";
%filename 'figure7.eps';file-properties "XNPEU";}}}%
%BeginExpansion
{\parbox[b]{4.5455in}{\begin{center}
\includegraphics[
trim=0.000000in 1.534181in 0.282943in 0.618515in,
height=4.8966in,
width=4.5455in
]%
{figure7.eps}%
\\
Figure 7-Graphs of $\left\vert \Theta\left(  L^-1\right)  \right\vert \left(
n,3;\ell\right)  $, $n=1,2,3$.
\end{center}}}%
%EndExpansion
\]
The reader might compare these with the diagrams of $S(n,3)$, $n=1,2,3$. They
are in Figure 2 of \cite{Har15'}.

\begin{theorem}
$\forall n,\left\vert \Theta\left(  L^{-1}\right)  \right\vert \left(
n,3;\ell\right)  $ is strongly subadditive (as a function of $\ell
\in\mathbb{Z}_{3^{n}}$).
\end{theorem}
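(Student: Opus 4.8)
The plan is to prove strong subadditivity by induction on $n$, using the recursive structure $S(n,3)=\{0,1,2\}\times S(n-1,3)$ together with the Steiner operations assembled in the preceding sections. The base case $n=1$ is $K_3$, where $|\Theta(L^{-1})|(1,3;\ell)=\ell(3-\ell)$ for $\ell\in\{0,1,2,3\}$, and one checks the finitely many pairs $x,y\ne 0$ directly (e.g. $f(1)+f(1)=4>2=f(2)$, $f(1)+f(2)=6>0=f(3)=f(0)$, etc.), so strong subadditivity holds. For the inductive step, assume $|\Theta(L^{-1})|(n-1,3;\cdot)$ is strongly subadditive on $\mathbb{Z}_{3^{n-1}}$ and fix $k,\ell\in\mathbb{Z}_{3^n}$ with $k,\ell\ne 0$; I must show $|\Theta(L^{-1})|(n,3;(k+\ell)\bmod 3^n)<|\Theta(L^{-1})|(n,3;k)+|\Theta(L^{-1})|(n,3;\ell)$.

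First I would record an explicit recursion for $g_n:=|\Theta(L^{-1})|(n,3;\cdot)$. Writing an initial $\ell$-segment of $Lex$ on $S(n,3)$ as $\{0\}\times S(n-1,3)$ filled up to a segment, then $\{1\}\times\!\cdots$, then $\{2\}\times\!\cdots$, the edge-boundary decomposes into the three interior contributions $g_{n-1}(\ell_0)$, $g_{n-1}(\ell_1)$, $g_{n-1}(\ell_2)$ (where $\ell=\ell_0+\ell_1+\ell_2$, $0\le \ell_i\le 3^{n-1}$, and the $\ell_i$ are "greedy": $\ell_0$ is as large as possible, i.e. $\ell_0=\min(\ell,3^{n-1})$, then $\ell_1=\min(\ell-\ell_0,3^{n-1})$, etc.) plus the three exterior matching edges $\{01^{n-1},10^{n-1}\}$, $\{02^{n-1},20^{n-1}\}$, $\{12^{n-1},21^{n-1}\}$, each of which is cut or not according to which corners lie in the segment. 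This gives a closed formula $g_n(\ell)=g_{n-1}(\ell_0)+g_{n-1}(\ell_1)+g_{n-1}(\ell_2)+b_n(\ell)$ where $b_n(\ell)\in\{0,1,2,3\}$ is the (computable) number of cut exterior edges — and here I would use Corollary 3 / Theorem 6 to assert that $Lex^{-1}\{1,\dots,\ell\}$ is stable and compressed, so that it really is an honest minimizer-shaped set whose boundary is given by this decomposition (more precisely, that $g_n$ as defined equals $|\Theta(Lex^{-1}\{1,\dots,\ell\})|$, which is the object of the theorem's statement).

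Second, with the formula in hand, I would prove subadditivity "coordinatewise." Given $k$ with greedy parts $(k_0,k_1,k_2)$ and $\ell$ with greedy parts $(\ell_0,\ell_1,\ell_2)$, let $(p_0,p_1,p_2)$ be the greedy parts of $(k+\ell)\bmod 3^n$. The key combinatorial lemma is that, after accounting for the wraparound mod $3^n$, one can match up the parts so that $\sum g_{n-1}(p_i)\le \sum g_{n-1}(k_i)+\sum g_{n-1}(\ell_i)$ using the inductive strong subadditivity of $g_{n-1}$ within each coordinate (carries from coordinate to coordinate behave like addition in base $3^{n-1}$, and each carry strictly helps because $g_{n-1}$ is subadditive there), while the exterior term satisfies $b_n((k+\ell)\bmod 3^n)\le b_n(k)+b_n(\ell)$ with a little room to spare. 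Combining, $g_n((k+\ell)\bmod 3^n)\le \sum g_{n-1}(k_i)+\sum g_{n-1}(\ell_i)+b_n(k)+b_n(\ell)=g_n(k)+g_n(\ell)$, and I then chase through the cases to extract the \emph{strict} inequality: since $k,\ell\ne 0$, at least one coordinate sees genuinely nonzero contributions from both $k$ and $\ell$ (or a nontrivial carry occurs), and at that coordinate the inductive strong subadditivity of $g_{n-1}$, or the strict slack in the $b_n$ estimate, forces "$<$".

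The main obstacle I anticipate is exactly this bookkeeping of carries together with the exterior term $b_n$: the greedy splitting $\ell\mapsto(\ell_0,\ell_1,\ell_2)$ is not additive ($(k+\ell)$'s parts are not the coordinatewise sums of $k$'s and $\ell$'s parts — there are carries, and a wraparound at $3^n$), and $b_n$ depends in a mildly irregular way on the low-order base-$3$ digits of $\ell$ (it detects whether the relevant corner vertices are included). So the real work is a careful case analysis: (i) no carry and no wrap, where the claim is immediate from three applications of the inductive hypothesis; (ii) carries between the three blocks, where I must check that promoting a full block ($\ell_i=3^{n-1}$) and incrementing the next interacts correctly with $g_{n-1}(3^{n-1})=0$ (a full copy has no interior boundary) and with the exterior edges newly created or destroyed; and (iii) the wrap $k+\ell\ge 3^n$, handled by the symmetry $|\Theta(S)|=|\Theta(V\setminus S)|$, i.e. $g_n(\ell)=g_n(3^n-\ell)$, reducing to a non-wrap case. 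Verifying that the strict inequality survives every branch — in particular the "edge" branches where the interior terms collapse to $0$ and only the $b_n$'s are in play — is where I expect to spend the most care, and it is essentially the only place the hypothesis $m=3$ (just three blocks, so $b_n\le 3$ and the carry structure is small) is really being exploited.
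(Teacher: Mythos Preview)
Your overall plan (induction on $n$ via a recursive decomposition of $g_n=|\Theta(L^{-1})|(n,3;\cdot)$) is the same as the paper's, but the specific decomposition you propose breaks down at a crucial point. You write $g_n(\ell)=\sum_i g_{n-1}(\ell_i)+b_n(\ell)$ with $b_n$ the number of cut exterior edges, and then assert that $b_n((k+\ell)\bmod 3^n)\le b_n(k)+b_n(\ell)$ ``with a little room to spare.'' This is false. Already for $n=2$ one has $b_2(0),\dots,b_2(8)=0,0,1,2,1,1,2,1,0$; in particular $b_2(1)+b_2(1)=0<1=b_2(2)$ and $b_2(1)+b_2(2)=1<2=b_2(3)$. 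So your ``no carry'' case (i), which you say is ``immediate from three applications of the inductive hypothesis,'' is exactly where the argument fails: the interior part is indeed (strictly) subadditive by induction, but the exterior part $b_n$ goes the wrong way, and you have given no mechanism to show the interior slack always dominates the exterior deficit.

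The paper's fix is to use a \emph{different} two-term decomposition, engineered so that \emph{both} summands are subadditive. It sets
\[
|\Theta_1|(n,3;\ell)=g_{n-1}(\ell\bmod 3^{n-1})-1\quad(\ell\not\equiv 0),\qquad
|\Theta_0|(n,3;\ell)=g_n(\ell)-|\Theta_1|(n,3;\ell),
\]
and checks that $|\Theta_0|$ is the explicit step function taking values $0,1,2$ (Lemma~3). The ``$-1$'' shift is the whole point: it makes $|\Theta_1|$ the $3$-replicate of a (by induction) subadditive function (Lemmas~8--9), while $|\Theta_0|$, being at most $2$ and at least $1$ away from $0$, is trivially subadditive (Lemma~7). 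This already gives subadditivity of $g_n$; to upgrade to \emph{strong} subadditivity the paper then locates the equality cases of $|\Theta_0|$ (they force $k,\ell<3^{n-1}/2$ with $k+\ell>3^{n-1}/2$, up to symmetry) and uses two tailored recursions (Lemmas~5 and~6) to show $|\Theta_1|$ is strictly subadditive at exactly those pairs. Your carry-based case split does not line up with this, and without the shifted decomposition you would have to redo essentially the same analysis to bound the $b_n$-deficit by the interior slack.
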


Before the proof of Theorem 7, we need some technical lemmas. Let%

\[
\left\vert \Theta_{0}\right\vert (n,3;\ell\boldsymbol{)=}\left\{
\begin{tabular}
[c]{ll}%
$0$ & $\text{if }\ell=0,$\\
$1$ & if \ $0<\ell<3^{n-1}/2$ or $3^{n}-3^{n-1}/2<\ell<3^{n},$\\
$2$ & if $3^{n-1}/2<\ell<3^{n}-3^{n-1}/2$.
\end{tabular}
\right.
\]
$\ $and$\ $%
\[
\left\vert \Theta_{1}\right\vert (n,3;\ell\boldsymbol{)=}\left\{
\begin{array}
[c]{ll}%
0 & \text{if }\ell=0\text{ }\left(  \operatorname{mod}3^{n-1}\right)  ,\\
\left\vert \Theta\left(  L^{-1}\right)  \right\vert \left(  n-1,3;\ell\left(
\operatorname{mod}3^{n-1}\right)  \right)  -1 & \text{if\ }\ell\neq0\text{
}\left(  \operatorname{mod}3^{n-1}\right)  \text{.}%
\end{array}
\right.
\]

\begin{lemma}
$\left\vert \Theta_{0}\right\vert (n,3;\ell\boldsymbol{)+}\left\vert
\Theta_{1}\right\vert (n,3;\ell\boldsymbol{)=}\left\vert \Theta\left(
L^{-1}\right)  \right\vert \left(  n,3;\ell\right)  $.

\begin{proof}
$\left\vert \Theta_{1}\right\vert (n,3;\ell\boldsymbol{)}$ is essentially the
contribution of the edges interior to $\left\{  i\right\}  \times S\left(
n-1,3\right)  $, $i=0,1,2$, to $\left\vert \Theta\left(  L^{-1}\right)
\right\vert \left(  n,3;\ell\right)  $. We say "essentially" because when
$10^{n-1}$ is numbered, $Lex\left(  10^{n-1}\right)  =3^{n-1}+1$, $\left\vert
\Theta\right\vert $ increases by $1$ from its predecessor $02^{n-1}$ (and
similarly with $20^{n-1}$). However at $0^{n}$ the increase is from $0$ to $2
$, but this is compensated for by adding $\left\vert \Theta_{0}\right\vert
(n,3;1\boldsymbol{)}=1$ to $\left\vert \Theta_{1}\right\vert (n,3;1)=1$. At
$\ell=\left(  3^{n-1}+1\right)  /2$ there is another external edge, $\left\{
01^{n-1},10^{n-1}\right\}  $, added to the edge-boundary of $Lex^{-1}\left(
\left\{  1,2,...,\ell\right\}  \right)  $, so $\left\vert \Theta
_{0}\right\vert $ increases from $1$ to $2$ at that point. The decreases on
the other side of $3^{n}/2$ are symmetric.
\end{proof}
\end{lemma}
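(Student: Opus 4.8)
The plan is to split the edge-boundary of an initial $Lex$-segment of $S(n,3)$ into the edges interior to the three copies of $S(n-1,3)$ and the three ``exterior'' edges joining those copies, to match $\left\vert \Theta_{1}\right\vert$ against the interior part and $\left\vert \Theta_{0}\right\vert$ against the exterior part, and to observe that the two parts differ from their namesakes only by a single unit that they exchange.

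Write $N=3^{n-1}$ and $g(j)=\left\vert \Theta\left(L^{-1}\right)\right\vert \left(n-1,3;j\right)$. Recall that $S(n,3)$ consists of three copies $\{k\}\times S(n-1,3)$, $k=0,1,2$, joined by the exterior edges $e_{01}=\{01^{n-1},10^{n-1}\}$, $e_{02}=\{02^{n-1},20^{n-1}\}$, and $e_{12}=\{12^{n-1},21^{n-1}\}$. Since $Lex$ orders vertices first by their leading coordinate, $S=Lex^{-1}\left(\{1,\dots,\ell\}\right)$ fills copy $0$, then copy $1$, then copy $2$, each internally in the $Lex$ order of $S(n-1,3)$. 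With $\ell_{k}=\left\vert S\cap(\{k\}\times S(n-1,3))\right\vert$, the total boundary $\left\vert \Theta\left(L^{-1}\right)\right\vert \left(n,3;\ell\right)$ is the interior contribution $g(\ell_{0})+g(\ell_{1})+g(\ell_{2})$ plus the number of exterior edges cut. Because the copies fill in order, at most one $\ell_{k}$ is strictly between $0$ and $N$ and the others are $0$ or $N$; as $g(0)=g(N)=0$, the interior contribution is just $g(\ell\bmod N)$. Comparing with the definition, $\left\vert \Theta_{1}\right\vert \left(n,3;\ell\right)=g(\ell\bmod N)-\varepsilon$, where $\varepsilon=1$ when $\ell\not\equiv 0\ (\mathrm{mod}\ N)$ and $\varepsilon=0$ otherwise.

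I would then compute the exterior part explicitly from the $Lex$-positions of the six corner endpoints: $(N+1)/2$ for $01^{n-1}$, $N$ for $02^{n-1}$, $N+1$ for $10^{n-1}$, $2N$ for $12^{n-1}$, $2N+1$ for $20^{n-1}$, and $2N+(N+1)/2$ for $21^{n-1}$ (these come from $Lex(1^{n-1})=(N+1)/2$ and $Lex(2^{n-1})=N$ inside a copy, with offsets $N,2N$ for copies $1,2$). An exterior edge is cut precisely when $\ell$ is at or above its smaller endpoint and below its larger one, so $e_{01}$ is cut for $(N+1)/2\le \ell\le N$, $e_{02}$ for $N\le \ell\le 2N$, and $e_{12}$ for $2N\le \ell\le 2N+(N-1)/2$. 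Summing these three indicators gives a piecewise-constant count whose jumps occur at $(N+1)/2,\ N,\ N+1,\ 2N,\ 2N+1,\ (5N+1)/2$, and checking it against the three cases of $\left\vert \Theta_{0}\right\vert$ (using that $N$ is odd, so the threshold $N/2$ separates $\ell\le(N-1)/2$ from $\ell\ge(N+1)/2$) yields $\left\vert \Theta_{0}\right\vert \left(n,3;\ell\right)=(\text{exterior count})+\varepsilon$ with the same $\varepsilon$. Adding the two expressions, the $\varepsilon$'s cancel and the interior and exterior counts recombine into the total, which is the assertion.

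The routine heart of the argument is the third step: locating the six corners and tabulating the cut exterior edges on the constancy intervals $[0,(N-1)/2]$, $[(N+1)/2,N-1]$, $\{N\}$, $[N+1,2N-1]$, $\{2N\}$, $[2N+1,(5N-1)/2]$, $[(5N+1)/2,3N]$. The one conceptual point to keep straight is the shared unit $\varepsilon$: on a partially filled copy the interior count $g(\ell\bmod N)$ exceeds $\left\vert \Theta_{1}\right\vert$ by one, and that is exactly the unit by which $\left\vert \Theta_{0}\right\vert$ exceeds the bare exterior-edge count, so the transfer balances. I expect the main obstacle to be verifying the values at the copy boundaries $\ell=N$ and $\ell=2N$, where $\varepsilon=0$, two exterior edges are cut at once, and one must confirm the interior count is genuinely $0$; these are exactly the points the informal increment argument flags with its remarks about $10^{n-1}$ and $20^{n-1}$.
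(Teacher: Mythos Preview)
Your proposal is correct and follows essentially the same route as the paper: decompose $\left\vert \Theta\left(L^{-1}\right)\right\vert(n,3;\ell)$ into interior edges (within the three copies of $S(n-1,3)$) and the three exterior edges, then identify the unit $\varepsilon$ that $\left\vert \Theta_{1}\right\vert$ borrows from the interior count and $\left\vert \Theta_{0}\right\vert$ returns on the exterior side. The paper sketches this via an informal increment argument at the transition points $\ell=1$, $\ell=(3^{n-1}+1)/2$, etc., whereas you carry out the full piecewise tabulation of the exterior-edge count from the six corner positions; your version is more explicit but the underlying decomposition and the $\varepsilon$-transfer are identical.
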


Lemma 2 gives a recursive procedure for computing the isoperimetric profile
$\left\vert \Theta\right\vert (S(n,3);\ell\boldsymbol{)}$. There is also a
formula that holds for all $m$: Given the base $m$ expansion of $\ell-1$,%
\[
\ell-1=\sum_{i=1}^{n}\ell_{m,i}m^{n-i}\text{,}%
\]
let%
\[
\ell_{m,i}^{\prime}=1+\max\left\{  j:j^{n-i}\leq_{Lex}\left(  \ell
_{m,i+1},\ell_{m,i+2},...,\ell_{m,n}\right)  \right\}
\]
then

\begin{proposition}
$\left\vert \Theta\left(  L^{-1}\right)  \right\vert \left(  n,m;\ell\right)
\boldsymbol{=}\sum_{h=1}^{n}\ell_{m,h}\left(  m-\ell_{m,h}\right)  +\left\vert
\ell_{m,h}^{\prime}-\ell_{m,h}\right\vert -\ell_{m,h}$.

\begin{proof}
Note that%
\[
\left\vert \ell_{m,h}^{\prime}-\ell_{m,h}\right\vert -\ell_{m,h}=\left\{
\begin{array}
[c]{ll}%
-\ell_{m,h}^{\prime} & \text{if }\ell_{m,h}^{\prime}\leq\ell_{m,h}\\
\ell_{m,h}^{\prime}-2\ell_{m,h} & \text{if }\ell_{m,h}^{\prime}\geq\ell_{m,h}%
\end{array}
\text{.}\right.
\]
Lex numbering proceeds through the Sierpinski subgraphs of level $h$,
$v^{\left(  h\right)  }\times S(n-h,m)$, $1\leq h\leq n$, in Lex order (wrt
$v^{\left(  h\right)  }$), completely numbering each one before moving on to
the next. The definition of $S(n,m)$ says that the edges at level $h$ connect
a corner of some $v^{\left(  h\right)  }\times S(n-h,m)$ to a corner of
$u^{\left(  h\right)  }\times S(n-h,m)$ where $v_{i}=u_{i}$, for $1\leq i<h$,
but $v_{h}=j\neq k=u_{h}$. The edge then connects $v^{\left(  h\right)
}k^{n-h}$ to $u^{\left(  h\right)  }j^{n-h}$. Since $\ell_{m,h}$ is the number
of copies of $S(n-h,m)$ contained in some $S(n-h+1,m)$ that are completely
contained in $Lex^{-1}\left(  \left\{  1,2,...,\ell\right\}  \right)  $,
$\ell_{m,h}\left(  m-\ell_{m,h}\right)  $ is the number of those edges cut by
$Lex^{-1}\left(  \left\{  1,2,...,\ell\right\}  \right)  $. The next copy of
$S(n-h,m)$ is only partially contained in $Lex^{-1}\left(  \left\{
1,2,...,\ell\right\}  \right)  $ and $\ell_{m,h}^{\prime}$ is the number of
its corner vertices that are in $Lex^{-1}\left(  \left\{  1,2,...,\ell
\right\}  \right)  $. If $\ell_{m,h}^{\prime}<\ell_{m,h}$, each of those
corner vertices cover up the other end of an edge that had been counted by
$\ell_{m,h}\left(  m-\ell_{m,h}\right)  $, so its marginal contribution to the
edge-boundary is $-1$. If $\ell_{m,h}^{\prime}>\ell_{m,h}$, those corner
vertices are cutting new edges so its marginal contribution is $+1$. Since all
edges are at some level, $h$, and the sets of edges involved at different
levels are disjoint, the contributions of each level add up so $\left\vert
\Theta\left(  L^{-1}\right)  \right\vert \left(  n,3;\ell\right)
\boldsymbol{=}\sum_{h=1}^{n}\ell_{m,h}\left(  m-\ell_{m,h}\right)  +\left\vert
\ell_{m,h}^{\prime}-\ell_{m,h}\right\vert -\ell_{m,h}$.
\end{proof}
\end{proposition}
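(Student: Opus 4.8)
The plan is to decompose the edge set of $S(n,m)$ by ``level'' and to count, at each level, how many edges the initial Lex-segment $L=Lex^{-1}\left(\left\{1,2,\ldots,\ell\right\}\right)$ cuts. From the analytic definition, every edge $\left\{u,v\right\}$ has a well-defined level $h\in\left\{1,\ldots,n\right\}$: the least coordinate where $u,v$ differ, with the tails forced to satisfy $u_{j}=v_{h}$, $v_{j}=u_{h}$ for $j>h$. Such an edge joins the corner $v^{(h)}k^{n-h}$ of the copy $v^{(h-1)}j\times S(n-h,m)$ to the corner $u^{(h)}j^{n-h}$ of $v^{(h-1)}k\times S(n-h,m)$, where $j,k$ are the differing $h$-th coordinates. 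Because the edge sets of distinct levels are disjoint, $\left\vert\Theta\left(L\right)\right\vert$ is the sum over $h$ of the number of level-$h$ edges cut, so it suffices to show that the $h$-th summand of the asserted formula counts exactly these.

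First I would note that the blocks $v^{(h-1)}\times S(n-h+1,m)$ (indexed by prefixes of length $h-1$) are themselves ordered by Lex on the prefix, so all but one block is either wholly inside $L$ or wholly outside it; only the \emph{active} block, whose prefix equals that of the $\ell$-th vertex, is split. Level-$h$ edges internal to a wholly-in or wholly-out block are never cut, so every cut level-$h$ edge lies in the active block. Inside it, the $m$ children $v^{(h-1)}c\times S(n-h,m)$ split as: the first $\ell_{m,h}$ children ($c<\ell_{m,h}$) lie wholly in $L$, the child $c=\ell_{m,h}$ is partially filled, and the remaining $m-\ell_{m,h}-1$ lie wholly outside. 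Here $\ell_{m,h}$ is precisely the $h$-th digit of $\ell-1$, and $\ell_{m,h}^{\prime}$ is the number of corner vertices $c^{n-h}$ of the partial child lying in $L$ (equivalently those with $c^{n-h}\leq_{Lex}\left(\ell_{m,h+1},\ldots,\ell_{m,n}\right)$), which is exactly what the stated max-formula computes.

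The heart of the argument is then a purely local count inside the active block. The level-$h$ edges form the complete matching pairing corner $c^{n-h}$ of child $c$ with corner $(c')^{n-h}$ of child $c'$ for each $c\neq c'$. I would tabulate, for each unordered pair of children, whether the two relevant corners lie on the same side of $L$: full--full and empty--empty pairs contribute nothing, full--empty pairs are always cut (yielding the bulk term $\ell_{m,h}(m-\ell_{m,h})$), and the pairs involving the partial child are governed by $\ell_{m,h}^{\prime}$. The two regimes $\ell_{m,h}^{\prime}\leq\ell_{m,h}$ and $\ell_{m,h}^{\prime}>\ell_{m,h}$ behave oppositely: in the first, each in-$L$ corner of the partial child sits at the far end of a full-to-partial edge and \emph{cancels} a would-be boundary edge (marginal $-1$); in the second, the extra in-$L$ corners reach out to empty children and \emph{create} boundary edges (marginal $+1$). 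Assembling these contributions is what should produce the closed form $\ell_{m,h}(m-\ell_{m,h})+\left\vert\ell_{m,h}^{\prime}-\ell_{m,h}\right\vert-\ell_{m,h}$, and summing over $h$ by additivity over the disjoint level classes finishes the proof.

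I expect the main obstacle to be precisely this partial-child bookkeeping. One must keep scrupulous track of the single ``internal'' corner $\ell_{m,h}^{\,n-h}$ of the partial child: it is incident to no level-$h$ edge, yet it may lie in $L$ and hence be counted among the $\ell_{m,h}^{\prime}$ in-$L$ corners. The case split must be organized so that this corner contributes nothing to the level-$h$ boundary, since that is exactly where a spurious $\pm1$ can enter when $\ell_{m,h}^{\prime}>\ell_{m,h}$. The second delicate point is the degenerate finest level $h=n$, where each child $S(0,m)$ is a single vertex whose $m$ ``corners'' coincide, so that ``partial'' and ``full'' collapse and the corner count must be read off consistently with the convention defining $\ell_{m,h}^{\prime}$. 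I would settle these two points first by checking the per-level count against the asserted summand on $S(1,3)$ and $S(2,3)$; once the internal-corner and degenerate-corner conventions are fixed, the remainder is routine additivity over the disjoint level classes.
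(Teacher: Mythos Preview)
Your proposal takes essentially the same approach as the paper: decompose the edge set of $S(n,m)$ by level $h$, localize all cut level-$h$ edges to the unique active copy of $S(n-h+1,m)$, and then count via the $K_m$ matching structure among its $m$ children using $\ell_{m,h}$ (number of full children) and $\ell_{m,h}^{\prime}$ (number of in-$L$ corners of the partial child). Your write-up is in fact more explicit than the paper's about the two delicate points (the internal corner $\ell_{m,h}^{\,n-h}$ and the degenerate level $h=n$); one small slip to fix when you carry it out is that the pairs you call ``full--empty'' number $\ell_{m,h}(m-\ell_{m,h}-1)$, not $\ell_{m,h}(m-\ell_{m,h})$ --- the paper gets $\ell_{m,h}(m-\ell_{m,h})$ by counting full-to-\emph{nonfull} (including the partial child) and then correcting, which is exactly the bookkeeping you anticipate having to straighten out.
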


\begin{lemma}
If $f,g:\mathbb{Z}_{N}\rightarrow\mathbb{Z}$ are subadditive and
$a,b\in\mathbb{Z}^{+}$, then $af+bg$ is subadditive.

\begin{proof}%
\begin{align*}
\left(  af+bg\right)  \left(  x+y\right)   &  =af\left(  x+y\right)
+bg\left(  x+y\right) \\
&  \leq\left(  af\left(  x\right)  +af\left(  y\right)  \right)  +\left(
bg\left(  x\right)  +bg\left(  y\right)  \right) \\
&  =\left(  af+bg\right)  \left(  x\right)  +\left(  af+bg\right)  \left(
y\right)  \text{.}%
\end{align*}

\end{proof}
\end{lemma}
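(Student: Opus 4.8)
The plan is to verify directly that the function $h := af + bg$ meets the two defining clauses of subadditivity on $\mathbb{Z}_N$, deducing each clause for $h$ from the same clause for $f$ and $g$ and using that the coefficients $a,b$ are positive. No induction or auxiliary construction is needed; this is a purely pointwise inequality argument.

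First I would dispose of the normalization clause (1). Because $f$ and $g$ are each subadditive, $f(0)=g(0)=0$, whence $h(0) = af(0)+bg(0) = 0$. Next I would treat the subadditivity clause (2). Fixing arbitrary $x,y\in\mathbb{Z}_N$, I would apply clause (2) to $f$ and to $g$ separately to obtain $f\bigl((x+y)\bmod N\bigr)\le f(x)+f(y)$ and $g\bigl((x+y)\bmod N\bigr)\le g(x)+g(y)$, then scale by the positive constants $a,b$ and add the two resulting inequalities:
\begin{align*}
h\bigl((x+y)\bmod N\bigr) &= af\bigl((x+y)\bmod N\bigr)+bg\bigl((x+y)\bmod N\bigr)\\
&\le a\bigl(f(x)+f(y)\bigr)+b\bigl(g(x)+g(y)\bigr)\\
&= h(x)+h(y).
\end{align*}

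I expect no genuine obstacle here: the single point that must be watched is the positivity of $a$ and $b$, since multiplying an inequality by a negative scalar would reverse its direction and destroy clause (2). (Integrality of $a,b$ is not actually used and could be relaxed to mere positivity; the hypothesis $a,b\in\mathbb{Z}^{+}$ simply reflects the intended application, where the coefficients count classes of edges.) The value of this lemma lies not in its difficulty but in its role as a combining principle: it is precisely what will let the eventual proof of Theorem 7 assemble the strong subadditivity of the full profile $\left\vert \Theta\left(L^{-1}\right)\right\vert(n,3;\cdot)$ from the subadditivity of the simpler summands isolated by Lemma 3 and the Proposition.
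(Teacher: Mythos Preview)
Your proof is correct and follows essentially the same route as the paper's: expand $(af+bg)(x+y)$, apply the subadditivity of $f$ and $g$, scale by the positive constants $a,b$, and regroup. You additionally verify the normalization clause $h(0)=0$ and remark on why positivity of $a,b$ is needed, both of which the paper leaves implicit.
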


\begin{lemma}
If $j<n$ and $3^{j-1}/2<\ell<3^{j}/2$, then $\left\vert \Theta_{1}\right\vert
(n,3;\ell\boldsymbol{)=}\left\vert \Theta_{1}\right\vert (j,3;\ell
\boldsymbol{)}+1$.

\begin{proof}
If $3^{j-1}/2<\ell<3^{j}/2$ then
\begin{align*}
\left\vert \Theta_{1}\right\vert (j+1,3;\ell\boldsymbol{)}  &  \boldsymbol{=}%
\left\vert \Theta\left(  L^{-1}\right)  \right\vert \left(  j,3;\ell\left(
\operatorname{mod}3^{j}\right)  \right)  -1\text{ by the definition of
}\left\vert \Theta_{1}\right\vert \text{,}\\
&  =\left\vert \Theta_{0}\right\vert (j,3;\ell\boldsymbol{)+}\left\vert
\Theta_{1}\right\vert (j,3;\ell)-1\text{ by Lemma 2,}\\
&  =2\boldsymbol{+}\left\vert \Theta_{1}\right\vert (j,3;\ell)-1\text{ by the
definition of }\left\vert \Theta_{0}\right\vert \text{,}\\
&  =\left\vert \Theta_{1}\right\vert (j,3;\ell)+1\text{.}%
\end{align*}
Similarly, for $n>j+1$,
\begin{align*}
\left\vert \Theta_{1}\right\vert (n,3;\ell\boldsymbol{)}  &  \boldsymbol{=}%
\left\vert \Theta\left(  L^{-1}\right)  \right\vert \left(  n-1,3;\ell\left(
\operatorname{mod}3^{n-1}\right)  \right)  -1\\
&  =\left\vert \Theta_{0}\right\vert (n-1,3;\ell\boldsymbol{)+}\left\vert
\Theta_{1}\right\vert (n-1,3;)-1\\
&  =1\boldsymbol{+}\left\vert \Theta_{1}\right\vert (n-1,3;\ell)-1\\
&  =\left\vert \Theta_{1}\right\vert (n-1,3;\ell)\\
&  =...\\
&  =\left\vert \Theta_{1}\right\vert (j+1,3;\ell\boldsymbol{)}\\
&  =\left\vert \Theta_{1}\right\vert (j,3;\ell)+1\text{.}%
\end{align*}

\end{proof}
\end{lemma}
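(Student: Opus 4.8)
The plan is to prove the identity by descending induction on $n$ (from level $n$ down to the base level $j+1$), using only three ingredients already in place: the recursive definition of $|\Theta_1|$, the decomposition identity $|\Theta_0|(k,3;\ell)+|\Theta_1|(k,3;\ell)=|\Theta(L^{-1})|(k,3;\ell)$, and the explicit piecewise formula for $|\Theta_0|$. With $\ell$ held fixed, the whole computation reduces to tracking which interval of the $|\Theta_0|$-definition the value $\ell$ occupies as the level $k$ varies; the $+1$ in the conclusion will come entirely from the one level at which $\ell$ sits in the middle (``$2$'') interval.

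First I would record the trivial size bounds forced by the hypothesis. Since $j<n$ we have $3^{j}/2\le 3^{n-1}/2<3^{n-1}$, so $\ell<3^{n-1}$, and more generally $0<\ell<3^{k-1}$ for every $k$ with $j<k\le n$. Hence $\ell\bmod 3^{k-1}=\ell$ and $\ell\not\equiv 0\pmod{3^{k-1}}$ at each such level, so the nontrivial branch of the definition applies and $|\Theta_1|(k,3;\ell)=|\Theta(L^{-1})|(k-1,3;\ell)-1$. Substituting the decomposition identity at level $k-1$ converts this into
\[
|\Theta_1|(k,3;\ell)=|\Theta_0|(k-1,3;\ell)+|\Theta_1|(k-1,3;\ell)-1,
\]
which shows that passing from level $k-1$ to level $k$ changes $|\Theta_1|$ by exactly $|\Theta_0|(k-1,3;\ell)-1$. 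The entire lemma is therefore governed by the value of $|\Theta_0|(k-1,3;\ell)$ for $j\le k-1\le n-1$.

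Next I would read $|\Theta_0|$ off its definition at each relevant level. At the base level $k-1=j$, the hypothesis $3^{j-1}/2<\ell<3^{j}/2$ places $\ell$ strictly inside the middle interval $3^{j-1}/2<\ell<3^{j}-3^{j-1}/2$ (because $3^{j}/2<3^{j}-3^{j-1}/2$), so $|\Theta_0|(j,3;\ell)=2$; this gives the base case $|\Theta_1|(j+1,3;\ell)=2+|\Theta_1|(j,3;\ell)-1=|\Theta_1|(j,3;\ell)+1$. At every higher level $j<k-1\le n-1$ we instead have $\ell<3^{j}/2\le 3^{(k-1)-1}/2$, placing $\ell$ in the first (``$1$'') interval, so $|\Theta_0|(k-1,3;\ell)=1$ and the recursion collapses to $|\Theta_1|(k,3;\ell)=|\Theta_1|(k-1,3;\ell)$. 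Chaining these equalities from $k=n$ down to $k=j+1$ telescopes away every intermediate level and yields $|\Theta_1|(n,3;\ell)=|\Theta_1|(j+1,3;\ell)=|\Theta_1|(j,3;\ell)+1$, as claimed.

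The only genuinely delicate point, and the step I would check most carefully, is the interval placement of $\ell$ in the $|\Theta_0|$-definition, since this is exactly where the asymmetry between the base step (which produces the $+1$) and the higher steps (which are value-preserving) lives. Concretely, I would verify the elementary inequalities $3^{j}/2<3^{j}-3^{j-1}/2$ at the base level and $3^{j}/2\le 3^{k-2}/2$ (equivalently $j\le k-2$) at each higher level; these are immediate for powers of $3$, but they decide whether the single extra external edge counted by $|\Theta_0|$ fails to cancel the $-1$ (base level, producing the net $+1$) or exactly cancels it (inductive steps). I would also note that the very small cases, in particular $j=1$ where the recursion bottoms out at level $0$, are cleanest to confirm by direct inspection rather than through the generic recursion.
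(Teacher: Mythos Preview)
Your proof is correct and follows essentially the same approach as the paper: you use the definition of $|\Theta_1|$ together with the decomposition $|\Theta(L^{-1})|=|\Theta_0|+|\Theta_1|$ to derive the recursion $|\Theta_1|(k,3;\ell)=|\Theta_0|(k-1,3;\ell)+|\Theta_1|(k-1,3;\ell)-1$, then read off $|\Theta_0|(j,3;\ell)=2$ at the base level and $|\Theta_0|(k-1,3;\ell)=1$ at all higher levels, exactly as the paper does. Your write-up is in fact slightly more careful than the paper's in making explicit the inequality checks (e.g.\ $3^{j}/2<3^{j}-3^{j-1}/2$ and $3^{j}/2\le 3^{k-2}/2$) that justify which branch of the $|\Theta_0|$ definition applies.
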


\begin{lemma}
For $j=0,1,...,n$, if $3^{n}/2-3^{j}/2\leq\ell<3^{n}/2$ then
\begin{align*}
\left\vert \Theta_{1}\right\vert (n,3;\ell\boldsymbol{)}  &  \boldsymbol{=}%
\left\vert \Theta_{1}\right\vert (j,3;\ell-\left(  3^{n}/2-3^{j}/2\right)
\boldsymbol{)+}\left(  n-j\right) \\
&  =\left\vert \Theta_{1}\right\vert (j,3;\ell\operatorname{mod}%
3^{j}\boldsymbol{)+}\left(  n-j\right)  \text{.}%
\end{align*}

\begin{proof}
If $j=n$ the result says that $\left\vert \Theta_{1}\right\vert (n,3;\ell
\boldsymbol{)}\boldsymbol{=}\left\vert \Theta_{1}\right\vert (n,3;\ell-0)+0$
which is true. If $j<n$ then we proceed by induction on $n$. For $n=1$, the
only value of $j$ we need to consider is $j=0$. In that case, $3^{n}%
/2-3^{j}/2=3^{1}/2-3^{0}/2=1$ and $1\leq\ell<3^{1}/2$ implies $\ell=1$. Then
$\left\vert \Theta_{1}\right\vert (n,3;\ell\boldsymbol{)=}\left\vert
\Theta_{1}\right\vert (1,3;1)=1$ and $\left\vert \Theta_{1}\right\vert
(j,3;\ell-\left(  3^{n}/2-3^{j}/2\right)  \boldsymbol{)+}\left(  n-j\right)
=\left\vert \Theta_{1}\right\vert (0,3;1-\left(  3^{1}/2-3^{0}/2\right)
\boldsymbol{)+}\left(  1-0\right)  =\left\vert \Theta_{1}\right\vert
(0,3;0\boldsymbol{)+}1=1$. So we have established the base case. Assume the
theorem is true for some $n-1\geq1$. We may assume that $j<n$, so $\left(
3^{n}/2-3^{j}/2\right)  -3^{n-1}=3^{n-1}/2-3^{j}/2\leq\ell-3^{n-1}<3^{n}/2$.
Now from the definition of $\left\vert \Theta_{1}\right\vert $, $\left\vert
\Theta_{1}\right\vert (n,3;\ell\boldsymbol{)=}\left\vert \Theta_{1}\right\vert
(n-1,3;\ell-3^{n-1}\boldsymbol{)+}1$, and by the inductive hypothesis,
$\left\vert \Theta_{1}\right\vert (n-1,3;\ell-3^{n-1}\boldsymbol{)=}\left\vert
\Theta_{1}\right\vert (j,3;\ell-3^{n-1}-\left(  3^{n-1}/2-3^{j}/2\right)
\boldsymbol{)+}\left(  n-1\right)  -j$. Therefore $\left\vert \Theta
_{1}\right\vert (n,3;\ell\boldsymbol{)=}\left\vert \Theta_{1}\right\vert
(j,3;\ell-\left(  3^{n}/2-3^{j}/2\right)  \boldsymbol{)+}\left(  n-j\right)  $.
\end{proof}
\end{lemma}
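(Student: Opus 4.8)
The plan is to induct on $n$, peeling off the top digit of the level-$n$ recursion for $|\Theta_1|$ at each step. If $j=n$ there is nothing to prove, since $3^n/2-3^n/2=0$ and $n-n=0$ make both sides equal to $|\Theta_1|(n,3;\ell)$, so I would fix $j<n$ throughout. I would also dispose of the second equality at the outset: because $3^{n-j}-1$ is even, $3^n/2-3^j/2=3^j\cdot(3^{n-j}-1)/2$ is a multiple of $3^j$, and the hypothesis $3^n/2-3^j/2\le\ell<3^n/2$ forces $0\le\ell-(3^n/2-3^j/2)<3^j/2<3^j$; hence $\ell-(3^n/2-3^j/2)$ already lies in $[0,3^j)$ and is congruent to $\ell$ modulo $3^j$, so it equals $\ell\bmod 3^j$. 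Only the first equality then needs work.

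For the base case $n=1$ (which forces $j=0$) the interval $[3/2-1/2,\,3/2)=[1,3/2)$ contains only $\ell=1$, and the identity is checked directly from the definitions of $|\Theta_0|$ and $|\Theta_1|$ (with the convention $|\Theta_1|(0,3;\cdot)\equiv 0$). For the inductive step, assume the result at level $n-1$ and take $j<n$, $3^n/2-3^j/2\le\ell<3^n/2$; note this forces $3^{n-1}\le\ell<3^n/2<2\cdot 3^{n-1}$, so $\ell\bmod 3^{n-1}=\ell-3^{n-1}$. The crux is the one-level reduction
\[
|\Theta_1|(n,3;\ell)=|\Theta_1|(n-1,3;\ell-3^{n-1})+1 ,
\]
which I would derive exactly as in the proof of Lemma 5 (the companion statement for $\ell$ near $0$): unfold the definition of $|\Theta_1|$ to $|\Theta(L^{-1})|(n-1,3;\ell-3^{n-1})-1$, split it by the additivity identity $|\Theta(L^{-1})|=|\Theta_0|+|\Theta_1|$ (Lemma 3), and observe from the definition of $|\Theta_0|$ that $|\Theta_0|(n-1,3;\ell-3^{n-1})=2$. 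Subtracting $3^{n-1}$ from the hypothesis range and using $3^n/2-3^{n-1}=3^{n-1}/2$ then shows $\ell-3^{n-1}$ lies in $[3^{n-1}/2-3^j/2,\,3^{n-1}/2)$, which is exactly the level-$(n-1)$ range for the same $j$; feeding the inductive hypothesis into the reduction and simplifying $3^{n-1}+(3^{n-1}/2-3^j/2)=3^n/2-3^j/2$ and $1+((n-1)-j)=n-j$ closes the induction.

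The obstacle is the step that looks routine: justifying $|\Theta_0|(n-1,3;\ell-3^{n-1})=2$ for every $\ell$ in the stated interval, equivalently checking that the range hypothesis is self-reproducing under $n\mapsto n-1$ while never straying into the bands where $|\Theta_0|$ is $0$ or $1$. The upper side is safe ($\ell-3^{n-1}<3^{n-1}/2<3^{n-1}-3^{n-2}/2$), but on the lower side $\ell-3^{n-1}$ can be as small as $3^{n-1}/2-3^j/2$, and when $j$ is close to $n$ this drops to or below $3^{n-2}/2$, where $|\Theta_0|$ is no longer $2$; so the real work is a careful case analysis on $j$ — and, at the extreme $j=n-1$, on the position of $\ell$ within $[3^{n-1},3^n/2)$ — pinning down how large the interval in the hypothesis may legitimately be taken. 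Once those ranges are nailed down, the rest is bounded nonnegative integer arithmetic straight from the definitions of $|\Theta_0|$, $|\Theta_1|$ and Lemmas 3 and 5.
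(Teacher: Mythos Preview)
Your outline is the paper's proof, step for step: dispense with $j=n$, induct on $n$ with base case $n=1,\ j=0$, and in the inductive step reduce via
\[
|\Theta_1|(n,3;\ell)=|\Theta_1|(n-1,3;\ell-3^{n-1})+1
\]
before invoking the hypothesis at level $n-1$ with the same $j$. Your only additions are the clean disposal of the second equality (which the paper leaves implicit) and a more explicit derivation of the displayed reduction: the paper asserts it ``from the definition of $|\Theta_1|$'', while you route it through Lemma~3 and the requirement $|\Theta_0|(n-1,3;\ell-3^{n-1})=2$, and you correctly flag that this last point can fail when $j$ is close to $n$.

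That flag is not a bookkeeping nuisance to be dissolved by case analysis; it is a genuine obstruction to the statement as written. For $j=n-1$ the hypothesis admits $\ell=3^{n-1}$, where $|\Theta_1|(n,3;3^{n-1})=0$ by the first clause of the definition while the right side is $|\Theta_1|(n-1,3;0)+1=1$; for $n\ge 3$ the mismatch persists for every $\ell$ with $0\le\ell-3^{n-1}<3^{n-2}/2$, exactly the band where $|\Theta_0|(n-1,3;\ell-3^{n-1})<2$ (e.g.\ $n=3$, $j=2$, $\ell=10$). So the ``nailing down'' you anticipate cannot close the induction over the full interval in the hypothesis; the identity holds---and both your argument and the paper's go through cleanly---only once $\ell-3^{n-1}>3^{n-2}/2$, which is automatic for $j\le n-2$ but a genuine restriction for $j=n-1$. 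The paper's own proof shares this boundary defect, simply not pausing to notice it.
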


\begin{lemma}
$\left\vert \Theta_{0}\right\vert (n,3;\ell\boldsymbol{)}$ is subadditive.

\begin{proof}
By direct computation from the definition of $\left\vert \Theta_{0}\right\vert
$.
\end{proof}
\end{lemma}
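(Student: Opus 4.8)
The plan is to verify the two defining conditions of subadditivity directly from the explicit piecewise formula for $\left\vert \Theta_{0}\right\vert (n,3;\ell)$ --- that is, to carry out exactly the "direct computation" promised in the statement. The first condition, $\left\vert \Theta_{0}\right\vert (n,3;0)=0$, is immediate from the definition. For the subadditivity inequality the only facts I need to read off from the definition are: (a) $\left\vert \Theta_{0}\right\vert (n,3;\ell)\in\{0,1,2\}$ for every $\ell$; and (b) $\left\vert \Theta_{0}\right\vert (n,3;\ell)=0$ if and only if $\ell\equiv 0\ (\operatorname{mod}3^{n})$. Property (b) rests on the harmless observation that, since $3^{n-1}$ is odd, the thresholds $3^{n-1}/2$ and $3^{n}-3^{n-1}/2$ are half-integers, so no integer residue in $\mathbb{Z}_{3^{n}}$ sits on a boundary of the piecewise definition, and every nonzero residue is accordingly assigned the value $1$ or $2$.

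Granting (a) and (b), the inequality $\left\vert \Theta_{0}\right\vert (n,3;(x+y)\operatorname{mod}3^{n})\leq\left\vert \Theta_{0}\right\vert (n,3;x)+\left\vert \Theta_{0}\right\vert (n,3;y)$ for $x,y\in\mathbb{Z}_{3^{n}}$ follows from a two-line case split. If one of $x,y$ is $0$ --- say $y=0$ --- then $(x+y)\operatorname{mod}3^{n}=x$ and $\left\vert \Theta_{0}\right\vert (n,3;y)=0$, so the two sides coincide. If $x\neq 0$ and $y\neq 0$, then (b) forces $\left\vert \Theta_{0}\right\vert (n,3;x)\geq 1$ and $\left\vert \Theta_{0}\right\vert (n,3;y)\geq 1$, so the right-hand side is at least $2$, while (a) bounds the left-hand side by $2$; hence the inequality holds.

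I expect no genuine obstacle here: the entire content is in organizing the trivial case analysis, and the only point requiring even minor care is the boundary behavior of the piecewise definition, which the oddness of $3^{n-1}$ settles cleanly. In fact the argument proves slightly more than is asked --- any integer-valued function on a cyclic group $\mathbb{Z}_{N}$ that vanishes only at $0$ and is bounded above by $2$ is automatically subadditive, and $\left\vert \Theta_{0}\right\vert (n,3;\cdot)$ is of this type --- so if one prefers, the whole lemma can be subsumed under that one-line general remark.
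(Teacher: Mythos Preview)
Your argument is correct and is precisely the ``direct computation from the definition'' that the paper invokes without spelling out; the key observations that $\left\vert \Theta_{0}\right\vert$ takes values only in $\{0,1,2\}$ and vanishes only at $0$ are exactly what make the case split trivial. Your closing remark packaging this as a general fact about $\{0,1,2\}$-valued functions vanishing only at the identity is a clean way to phrase it, but it is the same computation the paper has in mind.
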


\begin{definition}
A function $g:[mN]\rightarrow\mathbb{Z}$ is the $m$-replicate of
$f:[N]\rightarrow\mathbb{Z}$ if $g\left(  \ell\right)  =f\left(
\ell(\operatorname{mod}N)\right)  $.
\end{definition}

\begin{example}
$\left\vert \Theta_{1}\right\vert (n,3;\ell\boldsymbol{)}$ is the
$3$-replicate of $\left\vert \Theta\left(  L^{-1}\right)  \right\vert \left(
n-1,3;\ell(\operatorname{mod}3^{n-1})\right)  -1$ except that the value of
$-1$ at $\ell=0,3^{n-1}$ is replaced by $0$.
\end{example}

\begin{lemma}
If $f:\mathbb{Z}_{N}\rightarrow\mathbb{Z}$ is subadditive, then its
$m$-replicate, $g:\mathbb{Z}_{mN}\rightarrow\mathbb{Z}$, is also subadditive.

\begin{proof}%
\begin{align*}
g(\left(  k+\ell\right)  \operatorname{mod}mN)  &  =f(\left(  k+\ell\right)
(\operatorname{mod}N))\\
&  \leq f(\left(  k\right)  (\operatorname{mod}N))+f(\left(  \ell\right)
(\operatorname{mod}N))\\
&  =g(k)+g(\ell)\text{.}%
\end{align*}

\end{proof}
\end{lemma}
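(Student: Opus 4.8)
The plan is to verify, straight from the definition, the two conditions that make a function $g\colon\mathbb{Z}_{mN}\to\mathbb{Z}$ subadditive $\pmod{mN}$, deriving each from the corresponding property of $f$ together with elementary modular arithmetic. The normalization condition is immediate: $g(0)=f(0\bmod N)=f(0)=0$, since $f$ is assumed subadditive. So the whole content is the inequality $g\bigl((x+y)\bmod mN\bigr)\le g(x)+g(y)$ for all $x,y\in\mathbb{Z}_{mN}$.

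For that I would first record the one arithmetic fact that is actually being used: because $N\mid mN$, reduction modulo $N$ is compatible with addition modulo $mN$, i.e. for all integers $a,b$,
\[
\bigl((a+b)\bmod mN\bigr)\bmod N=\bigl((a\bmod N)+(b\bmod N)\bigr)\bmod N,
\]
both sides being equal to $(a+b)\bmod N$. Writing $\bar{x}=x\bmod N$ and $\bar{y}=y\bmod N$, which are legitimate elements of $\mathbb{Z}_N$, this gives
\[
g\bigl((x+y)\bmod mN\bigr)=f\bigl((\bar{x}+\bar{y})\bmod N\bigr)\le f(\bar{x})+f(\bar{y})=g(x)+g(y),
\]
where the middle inequality is precisely the subadditivity of $f$ applied to the pair $\bar{x},\bar{y}\in\mathbb{Z}_N$, and the two equalities are the definition of the $m$-replicate. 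This closes the argument.

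There is no serious obstacle here; the only points worth stating carefully are that $g$ is well defined on $\mathbb{Z}_{mN}$ (so that the displayed equalities make sense) and the compatibility identity for reduction mod $N$, both entirely routine. It is, however, worth remarking in the proof that the analogue with \emph{strong} subadditivity fails: if $\bar{x}=0$ but $x\neq 0$ in $\mathbb{Z}_{mN}$, then $g\bigl((x+y)\bmod mN\bigr)=f(\bar{y})=g(y)=g(x)+g(y)$ with equality, so strictness is lost at multiples of $N$. This is exactly the reason the $m$-replicate used for $\left\vert\Theta_{1}\right\vert$ must have its value at multiples of $3^{n-1}$ overwritten by $0$, and flagging it here keeps the bookkeeping honest when the lemma is invoked in the proof of Theorem~7.
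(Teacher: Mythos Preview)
Your proof is correct and follows exactly the paper's approach: both reduce $g((x+y)\bmod mN)$ to $f((x+y)\bmod N)$ via the compatibility of reduction mod $N$ with addition mod $mN$, apply the subadditivity of $f$, and unwind the definition of the replicate. Your version simply makes the modular-arithmetic step and the normalization $g(0)=0$ explicit, and your closing remark about the failure of \emph{strong} subadditivity at multiples of $N$ is a helpful addition that the paper leaves implicit.
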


\begin{lemma}
If $\left\vert \Theta\left(  L^{-1}\right)  \right\vert \left(  n,3;\ell
\right)  $ is strongly subadditive, then $\left\vert \Theta_{1}\right\vert
(n+1,3;\ell\boldsymbol{)}$ is subadditive.
\end{lemma}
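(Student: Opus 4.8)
The plan is to leverage the fact that $|\Theta_1|(n+1,3;\ell)$ is essentially the $3$-replicate of $|\Theta(L^{-1})|(n,3;\cdot)-1$ (Example~13). The only subtlety is the exceptional values: at $\ell \equiv 0 \pmod{3^n}$ the replicate of $|\Theta(L^{-1})|(n,3;\cdot)-1$ would give $-1$, but $|\Theta_1|$ is defined to be $0$ there. So $|\Theta_1|(n+1,3;\ell)$ equals $g(\ell) + \chi(\ell)$ where $g$ is the $3$-replicate of $|\Theta(L^{-1})|(n,3;\cdot)-1$ and $\chi$ is the indicator function that is $1$ exactly when $\ell \equiv 0 \pmod{3^n}$ (and $\ell \not\equiv 0$, but since $N = 3^{n+1}$, the argument $0$ itself gets $\chi = 0$, matching $|\Theta_1|(n+1,3;0) = 0$). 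I need to show this sum is subadditive given that $|\Theta(L^{-1})|(n,3;\cdot)$ is \emph{strongly} subadditive.

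First I would verify that $g$, the $3$-replicate, is subadditive. Here the hypothesis that $|\Theta(L^{-1})|(n,3;\cdot)$ is \emph{strongly} subadditive is what I need: the naive replicate $f(\ell) = |\Theta(L^{-1})|(n,3;\ell) - 1$ satisfies $f(0) = -1 \neq 0$, so Lemma~6 (subadditivity of replicates) does not apply directly. Instead, for $x, y \in \mathbb{Z}_{3^n}$ with $x, y \neq 0$, strong subadditivity gives $|\Theta(L^{-1})|(n,3;(x+y) \bmod 3^n) < |\Theta(L^{-1})|(n,3;x) + |\Theta(L^{-1})|(n,3;y)$, hence $f((x+y)\bmod 3^n) \le f(x) + f(y)$ after accounting for the $-1$ shifts on both sides (strict inequality on integers absorbs exactly the extra $+1$). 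When $x = 0$ or $y = 0$ the inequality $f((x+y)\bmod 3^n) \le f(x) + f(y)$ needs checking against $f(0) = -1$: if $x = 0$ then $f(y) \le f(y) + f(0) = f(y) - 1$ is \emph{false}, so the naive replicate is NOT subadditive, which is precisely why we must add $\chi$.

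Then I would show $g + \chi = |\Theta_1|(n+1,3;\cdot)$ is subadditive by case analysis on whether $x$, $y$, and $x+y \pmod{3^{n+1}}$ are $\equiv 0 \pmod{3^n}$. Write $x \bmod 3^n = x'$ and $y \bmod 3^n = y'$. The left side is $f((x+y) \bmod 3^n) + \chi(x+y)$, the right side is $f(x') + \chi(x) + f(y') + \chi(y)$. In the generic case $x', y' \neq 0$, strong subadditivity of $f+1$ gives the slack we need to cover $\chi(x+y) \le 1$, whether or not $x + y \equiv 0$. In the degenerate cases where $x' = 0$ (so $\chi(x) = 1$ and $f(x') = -1$): then the left side is $f(y') + \chi(x+y)$ and the right side is $-1 + 1 + f(y') + \chi(y) = f(y') + \chi(y)$; since $y \equiv x+y \pmod{3^n}$, we have $\chi(x+y) = \chi(y)$, so equality holds. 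Symmetrically for $y' = 0$. The remaining subcase is $x', y' \neq 0$ but $x' + y' = 3^n$, i.e., $x + y \equiv 0 \pmod{3^n}$: then $f((x+y)\bmod 3^n) = f(0) = -1$, $\chi(x+y) = 1$, so the left side is $0$, while the right side is $f(x') + f(y') + 0 + 0 \ge |\Theta(L^{-1})|(n,3;x') + |\Theta(L^{-1})|(n,3;y') - 2 \ge 0$ using $|\Theta(L^{-1})|(n,3;\cdot) \ge 1$ for nonzero arguments (Theorem~3). Finally $|\Theta_1|(n+1,3;0) = 0$ is immediate. \textbf{The main obstacle} is bookkeeping the exceptional values correctly — tracking when the $-1$ shift and the $\chi$ correction interact, particularly the boundary subcase $x' + y' = 3^n$ where strong subadditivity alone would be useless (it only gives a statement about $f(3^n \bmod 3^n) = f(0)$, which is the exceptional point) and one must fall back on the crude bound $|\Theta(L^{-1})|(n,3;\cdot) \ge 1$.
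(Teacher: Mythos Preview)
Your argument is correct and is essentially the same approach as the paper's, just organized less modularly. The paper first defines the auxiliary function
\[
\left\vert \Theta_{1}^{\prime}\right\vert (n,3;\ell)=\begin{cases}0 & \ell=0,\\ \left\vert \Theta\left(L^{-1}\right)\right\vert(n,3;\ell)-1 & \ell\neq 0,\end{cases}
\]
observes that strong subadditivity of $\left\vert \Theta(L^{-1})\right\vert(n,3;\cdot)$ makes $\left\vert \Theta_{1}^{\prime}\right\vert$ subadditive (this is exactly your case split on whether $x,y,(x+y)\bmod 3^{n}$ are zero), and then invokes the replicate lemma (Lemma~8) to conclude, since $\left\vert \Theta_{1}\right\vert(n+1,3;\cdot)$ is precisely the $3$-replicate of $\left\vert \Theta_{1}^{\prime}\right\vert$. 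Your decomposition $g+\chi$ is the same thing unrolled: your $g+\chi$ equals the $3$-replicate of $\left\vert \Theta_{1}^{\prime}\right\vert$, and your case analysis is what one would write out to verify both that $\left\vert \Theta_{1}^{\prime}\right\vert$ is subadditive and that replication preserves subadditivity.

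One small bookkeeping wobble: your parenthetical that $\chi(0)=0$ is inconsistent with $g(0)+\chi(0)=\left\vert \Theta_{1}\right\vert(n+1,3;0)=0$, since $g(0)=-1$. The clean choice is $\chi(\ell)=1$ for \emph{all} $\ell\equiv 0\pmod{3^{n}}$, including $\ell=0$; then the decomposition holds everywhere and your ``$\chi(x+y)=\chi(y)$ when $x'=0$'' step goes through without exception. This does not affect correctness, since the $x=0$ or $y=0$ cases are trivial anyway, but it tidies the exposition.
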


\begin{proof}
If $\left\vert \Theta\left(  L^{-1}\right)  \right\vert \left(  n,m;\ell
\right)  $ is strongly subadditive, then the function
\[
\left\vert \Theta_{1}^{\prime}\right\vert (n,3;\ell\boldsymbol{)=}\left\{
\begin{array}
[c]{ll}%
0 & \text{if }\ell=0\text{ }\\
\left\vert \Theta\left(  L^{-1}\right)  \right\vert \left(  n,m;\ell\right)
-1 & if\ \ell\neq0\text{ }%
\end{array}
\right.
\]
is subadditive. $\left\vert \Theta_{1}\right\vert (n+1,3;\ell\boldsymbol{)} $
is just the $3$-replicate of $\left\vert \Theta_{1}^{\prime}\right\vert
(n,3;\ell\boldsymbol{)}$, so$\ $by Lemma 7, $\left\vert \Theta_{1}\right\vert
(n+1,3;\ell\boldsymbol{)}$ is subadditive.
\end{proof}

\begin{proof}
(of theorem 7) It follows from Lemmas 3, 4, 7 \& 9 that if $\left\vert
\Theta\left(  L^{-1}\right)  \right\vert \left(  n,3;\ell\right)  $ is
strongly subadditive, then $\left\vert \Theta\left(  L^{-1}\right)
\right\vert \left(  n+1,3;\ell\right)  $ is subadditive. However, to complete
the induction step for theorem 7, we must show that $\left\vert \Theta\left(
L^{-1}\right)  \right\vert \left(  n+1,3;\ell\right)  $ is \textbf{strongly}
subadditive. For that we argue by contradiction: If $\left\vert \Theta\left(
L^{-1}\right)  \right\vert \left(  n+1,3;\ell\right)  $ is not strongly
subadditive, $\exists k,\ell>0$ such that $\Delta=$
\begin{align*}
&  \left\vert \Theta\left(  L^{-1}\right)  \right\vert \left(  n+1,3;k+\ell
\right)  \boldsymbol{-}\left(  \left\vert \Theta\left(  L^{-1}\right)
\right\vert \left(  n+1,3;k\right)  \boldsymbol{+}\left\vert \Theta\left(
L^{-1}\right)  \right\vert \left(  n+1,3;\ell\right)  \right) \\
&  =0\text{.}%
\end{align*}
However $\Delta=\Delta_{0}+\Delta_{1}$ and since $\left\vert \Theta
_{0}\right\vert (S(n+1,3);\ell\boldsymbol{)}$ and $\left\vert \Theta
_{1}\right\vert (S(n+1,3);\ell\boldsymbol{)}$ are both subadditive,
$\Delta_{0},\Delta_{1}\leq0$, so the only way that $\Delta$ can be $0$ is if
both $\Delta_{0},\Delta_{1}=0$.

The zeroes of $\Delta_{0}$ are easy to identify: If $0<k\leq\ell$ \&
$k<3^{n+1}/2$, only $\ell<3^{n}/2$ can give a zero and then $\left\vert
\Theta_{0}\right\vert (n+1,3;k\boldsymbol{)=}1\boldsymbol{=}\left\vert
\Theta_{0}\right\vert (n+1,3;\ell\boldsymbol{)}$ \& $\left\vert \Theta
_{0}\right\vert (n+1,3;k+\ell\boldsymbol{)=}2$. This happens iff $k,\ell
<3^{n}/2$ and $k+\ell>3^{n}/2$. Since $k\leq\ell$, $\ell>3^{n}/4$. For
$3^{n+1}/2<k,\ell<3^{n+1}$ there are symmetric conditions.

To show that none of these values of $k,\ell$ that give a zero of $\Delta_{0}$
also give a zero of $\Delta_{1}$, we partition the values of $k$ by
$k_{j}=\sum_{i=0}^{j}3^{i}=\left(  3^{j+1}-1\right)  /2$, $j=0,1,...,n-1$.
$k_{0}=1$, the minimum possible value of $k$. If $k=k_{0}=1$, then $\ell<$
$3^{n}/2$\ and $1+\ell>3^{n}/2$ has only one solution, $\ell=\left(
3^{n}-1\right)  /2$. But then $\left\vert \Theta_{1}\right\vert
(n+1,3;1\boldsymbol{)=}1$ whereas
\begin{align*}
&  \left\vert \Theta_{1}\right\vert (n+1,3;\left(  3^{n}+1\right)
/2\boldsymbol{)-}\left(  \left\vert \Theta_{1}\right\vert (n+1,3;\left(
3^{n}-1\right)  /2\boldsymbol{)}\right) \\
&  =\left\vert \Theta\left(  L^{-1}\right)  \right\vert \left(  n,3;\left(
3^{n}+1\right)  /2\boldsymbol{)}\right)  \boldsymbol{-}\left\vert
\Theta\left(  L^{-1}\right)  \right\vert \left(  n,3;\left(  3^{n}-1\right)
/2\boldsymbol{)}\right) \\
&  =(n+1)-(n+1)=0\text{,}%
\end{align*}
and we are done with $j=0$. If $j>0$ and $k_{j-1}<k\leq k_{j}$ then
$3^{n+1}/2-k_{j}=3^{n+1}/2-\left(  3^{j+1}-1\right)  /2$ so $3^{n+1}%
/2-3^{j+1}/2<\ell<3^{n+1}/2$. By Lemma 6 then $\left\vert \Theta
_{1}\right\vert (n+1,3;\ell\boldsymbol{)=}\left\vert \Theta_{1}\right\vert
(j+1,3;\ell-\left(  3^{n+1}/2-3^{j+1}/2\right)  \boldsymbol{)+}\left(  \left(
n+1\right)  -\left(  j+1\right)  \right)  $. Also, for $\left(  3^{j}%
-1\right)  /2=k_{j-1}<k\leq k_{j}=\left(  3^{j+1}-1\right)  /2<3^{j+1}/2$, by
Lemma 5 $\left\vert \Theta_{1}\right\vert (n+1,3;k\boldsymbol{)=}\left\vert
\Theta_{1}\right\vert (j+1,3;k\boldsymbol{)+}1$. Therefore if $k_{j-1}<k\leq
k_{j}$,%

\[%
\begin{tabular}
[c]{l}%
$\Delta_{1}=\left\vert \Theta_{1}\right\vert (n+1,3;k+\ell)-\left(  \left\vert
\Theta_{1}\right\vert (n+1,3;k)+\left\vert \Theta_{1}\right\vert
(n+1,3;\ell)\right)  $\\
$\ \ \ =\left\vert \Theta_{1}\right\vert j+1,3;\left(  k+\ell\right)  -\left(
3^{n+1}/2-3^{j+1}/2\boldsymbol{)}\right)  +\left(  \left(  n+1\right)
-\left(  j+1\right)  \right)  $\\
$\ \ \ \ \ \ \ -\left(  \left\vert \Theta_{1}\right\vert (j+1,3;k)+1\right)
$\\
$\ \ \ \ \ \ -\left(  \left\vert \Theta_{1}\right\vert (j+1,3;\ell-\left(
3^{n+1}/2-3^{j+1}/2\boldsymbol{)}\right)  +\left(  \left(  n+1\right)
-\left(  j+1\right)  \right)  \right)  $\\
$\ \ \ =-1+\left\vert \Theta_{1}\right\vert (j+1,3;\left(  k+\ell\right)
\operatorname{mod}3^{j+1})$\\
$\ \ \ \ \ \ -\left(  \left\vert \Theta_{1}\right\vert (j+1,3;k)+\left\vert
\Theta_{1}\right\vert (j+1,3;\ell(\operatorname{mod}3^{j+1})\right)  $\\
$\ \ \leq-1+0$ by the inductive hypothesis and Lemma 9.\\
$\ <0$.
\end{tabular}
\ \
\]

This covers all the values of $k$ (and the corresponding values of $\ell$).
\end{proof}

\begin{lemma}
If $0<k\leq\ell<3^{n}/2$ $\&$ $k+\ell>3^{n}/2$, then
\[
\left\vert \Theta\left(  L^{-1}\right)  \right\vert \left(  n,3;k\right)
\boldsymbol{+}\left\vert \Theta\left(  L^{-1}\right)  \right\vert \left(
n,3;\ell\right)  \boldsymbol{\ \geq}\left\vert \Theta\left(  L^{-1}\right)
\right\vert \left(  n,3;k+\ell\right)  +2.
\]

\begin{proof}
If $0<k<3^{n-1}/2$, then
\begin{align}
\left\vert \Theta\left(  L^{-1}\right)  \right\vert \left(  n,3;k\right)   &
\boldsymbol{=}\left\vert \Theta_{0}\right\vert (n,3;k\boldsymbol{)+}\left\vert
\Theta_{1}\right\vert (n,3;k\boldsymbol{)}\text{ by Lemma 3,}\nonumber\\
&  =1+\left(  \left\vert \Theta\left(  L^{-1}\right)  \right\vert \left(
n-1,3;k\right)  -1\right)  \text{ }\nonumber\\
&  \text{ \ \ \ \ \ \ \ by the definitions of }\Theta_{0}\text{ \& }\Theta
_{1}\text{,}\\
&  =\left\vert \Theta\left(  L^{-1}\right)  \right\vert \left(
n-1,3;k\right)  \text{.}\nonumber
\end{align}
Also since $\ell>3^{n}/2-3^{n-1}/2=3^{n-1}$, by the same reasoning,
\begin{align*}
\left\vert \Theta\left(  L^{-1}\right)  \right\vert \left(  n,3;\ell\right)
&  \boldsymbol{=}\left\vert \Theta_{0}\right\vert (n,3;\ell\boldsymbol{)+}%
\left\vert \Theta_{1}\right\vert (n,3;\ell\boldsymbol{)}\\
&  =2+\left(  \left\vert \Theta\left(  L^{-1}\right)  \right\vert \left(
n-1,3;\ell-3^{n-1}\right)  -1\right) \\
&  =\left\vert \Theta\left(  L^{-1}\right)  \right\vert \left(  n-1,3;\ell
-3^{n-1}\right)  +1\text{.}%
\end{align*}
Therefore%
\begin{align*}
&  \left\vert \Theta\left(  L^{-1}\right)  \right\vert \left(  n,3;k\right)
\boldsymbol{+}\left\vert \Theta\left(  L^{-1}\right)  \right\vert \left(
n,3;\ell\right) \\
&  =\left\vert \Theta\left(  L^{-1}\right)  \right\vert \left(
n-1,3;k\right)  +\left\vert \Theta\left(  L^{-1}\right)  \right\vert \left(
n-1,3;\ell\right)  +1\text{.}%
\end{align*}
Also, $k+\ell-3^{n-1}>3^{n}/2-3^{n-1}=3^{n-1}/2$, so by induction on $n$,
\begin{align*}
&  \left\vert \Theta\left(  L^{-1}\right)  \right\vert \left(  n-1,3;k\right)
\boldsymbol{+}\left\vert \Theta\left(  L^{-1}\right)  \right\vert \left(
n-1,3;\ell-3^{n-1}\right) \\
&  \geq\left\vert \Theta\left(  L^{-1}\right)  \right\vert \left(
n-1,3;k+\left(  \ell-3^{n-1}\right)  \right)  +2\text{,}%
\end{align*}
and we have
\begin{align*}
&  \left\vert \Theta\left(  L^{-1}\right)  \right\vert \left(  n,3;k\right)
\boldsymbol{+}\left\vert \Theta\left(  L^{-1}\right)  \right\vert \left(
n,3;\ell\right)  \left\vert \Theta\right\vert \\
&  \boldsymbol{\geq}\left(  \left\vert \Theta\left(  L^{-1}\right)
\right\vert \left(  n,3;k+\ell\right)  -1\right)  +1+2\\
&  =\left\vert \Theta\left(  L^{-1}\right)  \right\vert \left(  n,3;k+\ell
\right)  +2\text{.}%
\end{align*}
If, on the other hand, $k>3^{n-1}/2$, then $\ell>3^{n-1}/2$ and
\begin{align*}
\left\vert \Theta\left(  L^{-1}\right)  \right\vert \left(  n,3;k\right)   &
=\left\vert \Theta\left(  L^{-1}\right)  \right\vert \left(  n-1,3;k\left(
\operatorname{mod}3^{n-1}\right)  \right)  +1\text{,}\\
\left\vert \Theta\left(  L^{-1}\right)  \right\vert \left(  n,3;\ell\right)
&  =\left\vert \Theta\left(  L^{-1}\right)  \right\vert \left(  n-1,3;\ell
\left(  \operatorname{mod}3^{n-1}\right)  \right)  +1\text{,}\\
\left\vert \Theta\left(  L^{-1}\right)  \right\vert \left(  n,3;k+\ell\right)
&  \leq\left\vert \Theta\left(  L^{-1}\right)  \right\vert \left(
n-1,3;\left(  k+\ell\right)  \left(  \operatorname{mod}3^{n-1}\right)
\right)  +1\text{.}%
\end{align*}
Therefore%
\begin{align*}
&  \left\vert \Theta\left(  L^{-1}\right)  \right\vert \left(  n,3;k\right)
\boldsymbol{+}\left\vert \Theta\left(  L^{-1}\right)  \right\vert \left(
n,3;\ell\right) \\
&  =(\left\vert \Theta\left(  L^{-1}\right)  \right\vert \left(
n-1,3;\ell\left(  \operatorname{mod}3^{n-1}\right)  \right)  +1)\\
&  \text{ \ \ \ }+\left(  \left\vert \Theta\left(  L^{-1}\right)  \right\vert
\left(  n-1,3;\ell\left(  \operatorname{mod}3^{n-1}\right)  \right)
+1\right)  \text{,}\\
&  \geq(\left\vert \Theta\left(  L^{-1}\right)  \right\vert \left(
n-1,3;\left(  k+\ell\right)  \left(  \operatorname{mod}3^{n-1}\right)  \right)
\\
&  \text{ \ \ \ }+1)+2\text{, by strong subadditivity (Theorem 7),}\\
&  =\left(  \left\vert \Theta\left(  L^{-1}\right)  \right\vert \left(
n,3;k+\ell\right)  -1\right)  +3\text{,}\\
&  =\left\vert \Theta\left(  L^{-1}\right)  \right\vert \left(  n,3;k+\ell
\right)  +2\text{.}%
\end{align*}

\end{proof}
\end{lemma}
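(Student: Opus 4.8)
The plan is to prove the inequality by induction on $n$, the engine being Lemma~3, which writes $|\Theta(L^{-1})|(n,3;\cdot)=|\Theta_{0}|(n,3;\cdot)+|\Theta_{1}|(n,3;\cdot)$ and expresses $|\Theta_{1}|(n,3;\cdot)$ in terms of $|\Theta(L^{-1})|(n-1,3;\cdot)$ via the defining replicate structure. The base case $n=1$ is immediate: the hypotheses $0<k\le\ell<3/2$ and $k+\ell>3/2$ force $k=\ell=1$, $k+\ell=2$, and since $S(1,3)=K_{3}$ has $|\Theta|(K_{3};j)=j(3-j)$, both sides of the asserted inequality equal $4$. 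For the inductive step I would split on the position of $k$ relative to $3^{n-1}/2$; this is exhaustive because $k$ is an integer and $3^{n-1}/2$ is not, and since $k\le\ell$ the case $k>3^{n-1}/2$ automatically gives $\ell>3^{n-1}/2$ as well.

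In the case $k<3^{n-1}/2$ one has $|\Theta_{0}|(n,3;k)=1$, and the hypothesis $k+\ell>3^{n}/2$ together with $k<3^{n-1}/2$ forces $\ell>3^{n-1}$, so $\ell$ and $k+\ell$ both lie in the second $S(n-1,3)$-block $(3^{n-1},2\cdot3^{n-1})$, where $|\Theta_{0}|=2$. Unwinding the definitions of $\Theta_{0},\Theta_{1}$ then gives $|\Theta(L^{-1})|(n,3;k)=|\Theta(L^{-1})|(n-1,3;k)$, $|\Theta(L^{-1})|(n,3;\ell)=|\Theta(L^{-1})|(n-1,3;\ell-3^{n-1})+1$, and $|\Theta(L^{-1})|(n,3;k+\ell)=|\Theta(L^{-1})|(n-1,3;k+\ell-3^{n-1})+1$. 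The pair $\{k,\ell-3^{n-1}\}$ again satisfies the hypothesis of the lemma at level $n-1$: both members lie in $(0,3^{n-1}/2)$ and their sum is $k+\ell-3^{n-1}>3^{n}/2-3^{n-1}=3^{n-1}/2$. Applying the inductive hypothesis to this pair (the conclusion is symmetric in its two arguments, so one simply lists the smaller one first) yields a surplus of $2$ at level $n-1$, and since a single bookkeeping $+1$ is common to the three level-$n$ values, it cancels and the surplus $2$ is inherited at level $n$.

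In the case $k>3^{n-1}/2$ one has $|\Theta_{0}|(n,3;k)=|\Theta_{0}|(n,3;\ell)=2$ and $|\Theta_{0}|(n,3;k+\ell)\le2$, hence $|\Theta(L^{-1})|(n,3;k)=|\Theta(L^{-1})|(n-1,3;k\bmod 3^{n-1})+1$ (and likewise for $\ell$), while for $k+\ell$ only the one-sided bound $|\Theta(L^{-1})|(n,3;k+\ell)\le|\Theta(L^{-1})|(n-1,3;(k+\ell)\bmod 3^{n-1})+1$ is available. Here I would invoke \emph{strong} subadditivity of $|\Theta(L^{-1})|(n-1,3;\cdot)$ (Theorem~7): applied to the two level-$(n-1)$ residues it gives a strict drop of at least $1$, and adding the two units from the $+1$'s on the left produces a total margin of $3$ on the left-hand side, which exactly absorbs the single unit of slack in the upper bound for $|\Theta(L^{-1})|(n,3;k+\ell)$ and leaves the required surplus $2$.

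I expect the case $k>3^{n-1}/2$ to be the delicate one: plain subadditivity of $|\Theta(L^{-1})|$ is \emph{not} enough to close it, so the argument genuinely needs the strictness established in Theorem~7 (which is why Theorem~7 is proved first), and one must check that the three units balance exactly rather than coming up one short. The remaining care is with the boundary values where $k$, $\ell$, or $k+\ell$ is a multiple of $3^{n-1}$, for which the $-1\mapsto 0$ correction in the replicate applies; since $k,\ell\in(3^{n-1}/2,3^{n}/2)$ the only such values are $k=3^{n-1}$, $\ell=3^{n-1}$, and $k+\ell=2\cdot3^{n-1}$, and these I would verify directly — each holds, with equality, the identity $|\Theta(L^{-1})|(n,3;2\cdot3^{n-1})=2$ coming from the set $\{0,1\}\times S(n-1,3)$.
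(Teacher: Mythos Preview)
Your proposal is correct and follows essentially the same route as the paper: induction on $n$ with the case split $k\lessgtr 3^{n-1}/2$, using Lemma~3 to descend to level $n-1$, then invoking the inductive hypothesis in the first case and strong subadditivity (Theorem~7) in the second. Your explicit base case and your separate handling of the multiples of $3^{n-1}$ actually tighten the paper's argument, which glosses over those boundary values.
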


\begin{corollary}
\bigskip If $3^{n}/2<k\leq\ell<3^{n}$ $\&$ $k+\ell<3^{n+1}/2$, then
\[
\left\vert \Theta\left(  L^{-1}\right)  \right\vert \left(  n,3;k\right)
\boldsymbol{+}\left\vert \Theta\left(  L^{-1}\right)  \right\vert \left(
n,3;\ell\right)  \geq\left\vert \Theta\left(  L^{-1}\right)  \right\vert
\left(  n,3;\left(  k+\ell\right)  -3^{n}\right)  +2.
\]

\begin{proof}%
\begin{align*}
&  \left\vert \Theta\left(  L^{-1}\right)  \right\vert \left(  n,3;k\right)
\boldsymbol{+}\left\vert \Theta\left(  L^{-1}\right)  \right\vert \left(
n,3;\ell\right)  \boldsymbol{\ }\\
&  =\left\vert \Theta\left(  L^{-1}\right)  \right\vert \left(  n,3;3^{n}%
-k\right)  \boldsymbol{+}\left\vert \Theta\left(  L^{-1}\right)  \right\vert
\left(  n,3;3^{n}-\ell\right) \\
&  \text{ \ \ \ \ \ by the duality of }\Theta\text{,}\\
&  \boldsymbol{\geq}\left\vert \Theta\left(  L^{-1}\right)  \right\vert
\left(  n,3;2\cdot3^{n}-\left(  k+\ell\right)  \right)  +2\\
&  \text{ \ \ \ \ \ by Lemma 10,}\\
&  =\left\vert \Theta\left(  L^{-1}\right)  \right\vert \left(  n,3;\left(
k+\ell\right)  -3^{n}\right)  +2\\
&  \text{\ \ \ \ \ \ by the duality of }\Theta\text{ again.}%
\end{align*}

\end{proof}
\end{corollary}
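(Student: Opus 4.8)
The plan is to derive this corollary from Lemma 10 by applying the complementation symmetry of the edge-boundary twice, i.e. the ``duality of $\Theta$'' already used in the preceding proofs. Concretely, for every $0<j<3^n$ one has $|\Theta(L^{-1})|(n,3;j)=|\Theta(L^{-1})|(n,3;3^n-j)$: since $|\Theta(S)|=|\Theta(V_{S(n,3)}-S)|$ and the complement of $Lex^{-1}(\{1,\dots,j\})$ is, under the coordinate-reversing automorphism of $S(n,3)$ (the transposition $(0\,2)$ of $\{0,1,2\}$, which reverses $Lex$ order), carried to $Lex^{-1}(\{1,\dots,3^n-j\})$; edge-boundary cardinality is preserved by both complementation and automorphisms. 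Beyond this identity and Lemma 10, no new ingredient is needed.

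First I would set $k':=3^n-\ell$ and $\ell':=3^n-k$ and check that $(k',\ell')$ satisfies the hypotheses of Lemma 10. From $3^n/2<k\le\ell<3^n$ it follows that $0<k'\le\ell'<3^n/2$ (the middle inequality holds because $k\le\ell$), and from $k+\ell<3^{n+1}/2$ it follows that $k'+\ell'=2\cdot 3^n-(k+\ell)>3^n/2$. Hence Lemma 10 applies to the pair $(k',\ell')$ and gives $|\Theta(L^{-1})|(n,3;k')+|\Theta(L^{-1})|(n,3;\ell')\ge |\Theta(L^{-1})|(n,3;k'+\ell')+2$.

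Next I would transport this inequality back through the duality identity. On the left, $|\Theta(L^{-1})|(n,3;k')=|\Theta(L^{-1})|(n,3;\ell)$ and $|\Theta(L^{-1})|(n,3;\ell')=|\Theta(L^{-1})|(n,3;k)$. On the right, $k'+\ell'=2\cdot 3^n-(k+\ell)$, and since $k,\ell>3^n/2$ force $k+\ell>3^n$, we have $0<k'+\ell'<3^n$, so the identity applies once more and gives $|\Theta(L^{-1})|(n,3;2\cdot 3^n-(k+\ell))=|\Theta(L^{-1})|(n,3;(k+\ell)-3^n)$. Substituting these three equalities into the inequality from Lemma 10 produces exactly the asserted bound.

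The only real work is the range bookkeeping: confirming that the reflected arguments $k'$, $\ell'$, $k'+\ell'$ all lie in the intervals where Lemma 10 and the duality identity are valid, and that $(k+\ell)-3^n$ lands in $(0,3^n/2)$ so that it is a legitimate argument of $|\Theta(L^{-1})|(n,3;\cdot)$. All of these follow at once from the hypotheses, so I do not expect any genuine obstacle; this corollary is simply the reflection of Lemma 10 into the ``upper half'' of the range $[0,3^n]$, and it presumably serves to handle the symmetric case in the subadditivity argument that follows.
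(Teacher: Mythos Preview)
Your proposal is correct and follows exactly the same route as the paper: apply the duality $|\Theta(L^{-1})|(n,3;j)=|\Theta(L^{-1})|(n,3;3^n-j)$, invoke Lemma~10 on the reflected pair, and then apply duality once more. If anything, you are more careful than the paper in explicitly checking that the reflected arguments fall into the range where Lemma~10 applies.
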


\section{The Main Theorem}

\begin{theorem}
$S(n,3)$ has Lex nested solutions for $EIP$

\begin{proof}
(We actually prove Conjecture 2 for $m=3$, which implies Conjecture 1 for
$m=3$) By induction on $n$:

\begin{description}
\item[Initial Step] It is true for $n=1$, since in Corollary 1 (of Section
4.4) we noted that the stabilization-order of $S_{s,t}\left(  1,m\right)  $ is
the same as that of $S\left(  1,m\right)  $ which is the natural total order
$0<1<...<m-1$.

\item[Inductive Step] Assume the theorem is true for $n\geq1$ and that
\[
S\subseteq V_{S_{s,t}(n+1,m)}=\left\{  0,1,...,m-1\right\}  ^{n+1}%
\]
with $\left\vert S\right\vert =\ell$. We shall use the following three Steiner
operations to reduce any such $S$ to $Lex_{IJK}^{-1}\left(  \left\{
1,2,...,\ell\right\}  \right)  $ (since $IJK=\iota$, the identity permutation,
$Lex_{IJK}=Lex$):

\begin{enumerate}
\item Apply stabilization, so we need only consider $S$ that are stable,

\item Apply compression, utilizing the inductive hypothesis. Then we need only
consider $S$ that are compressed and stable,

\item Apply subadditivation (a StOp based on the subadditivity of
\linebreak$\left\vert \Theta\right\vert \left(  S(n,m);\ell\right)  $ reducing
$S$ to $Lex^{-1}(\left\{  1,...,\ell\right\}  )$.
\end{enumerate}
\end{description}

\underline{StOp $\mathbf{1}$-Stabilization}: See Section 4.4. $S$ is stable
iff it is an ideal in \linebreak$\mathcal{S}$-$\mathcal{O}\left(
S_{s,t}(n+1,m)\right)  =\mathcal{S}$-$\mathcal{O}\left(  S(n+1,m)\right)  $.
The structure of $\mathcal{S}$-$\mathcal{O}\left(  S(n+1,m)\right)  $ was
discussed in Sections 4.2.1 \& 4.2.2. In particular, with $\ell_{h}=\left\vert
S\cap\left(  \left\{  h\right\}  \times S_{s,t}(n,3)\right)  \right\vert $ we
have $\ell_{0}\geq\ell_{1}\geq...\geq\ell_{m-1}$ and $\sum_{h=0}^{m-1}\ell
_{h}=\ell$. We may assume, not only that $S$ is a stable optimal $\ell$-set,
but that $\boldsymbol{\ell}\left(  S\right)  =\left(  \ell_{0},\ell
_{1},...,\ell_{m-1}\right)  $ is Lex-last for such a set. Given $\left\vert
S\right\vert =\ell=\sum_{h=1}^{m}v_{h}m^{n-h}$, the Lex-last $\boldsymbol{\ell
}\left(  S\right)  $ for \textbf{all} $\ell$-sets is
\[
\boldsymbol{\ell}\left(  Lex^{-1}\left(  \left\{  1,2,...,\ell\right\}
\right)  \right)  _{h}=\left\{
\begin{array}
[c]{ll}%
m^{n-1} & \text{if }h<v_{0}\\
\ell-v_{0}m^{n-1} & \text{if }h=v_{0}\\
0 & \text{if }h>v_{0}%
\end{array}
\right.  \text{.}%
\]
If our optimal $S$ is $Lex^{-1}\left(  \left\{  1,2,...,\ell\right\}  \right)
$ we are done. We may assume then that
\[
S\neq Lex^{-1}\left(  \left\{  1,2,...,\ell\right\}  \right)  \text{,}%
\]
so $\boldsymbol{\ell}\left(  S\right)  <$ $\boldsymbol{\ell}\left(
Lex^{-1}\left(  \left\{  1,2,...,\ell\right\}  \right)  \right)  $.

\underline{StOp $\boldsymbol{2}$-Compression}: See Section 4.3. Remember that
the vertices in $V_{I}$ are not actually in $S$ and not counted as such, even
though they can contribute to $\left\vert \Theta_{s,t}(S)\right\vert $.
Similarly for those in $V_{K}$. For each $h$, $0\leq h\leq m-1$, we apply
$Comp_{h}$ to $Comp_{h-1}\left(  Comp_{h-2}\left(  ...\left(  Comp_{0}\left(
S\right)  \right)  \right)  \right)  $. After the application of $Comp_{m-1}$,
every section of $S$ ($S\cap\left(  \left\{  h\right\}  \times S_{s,t}%
(n,3)\right)  $) has been compressed ($=Lex_{\pi}^{-1}\left(  \left\{
1,2,...,\ell_{h}\right\}  \right)  $ where $\pi=I_{h}J_{h}K_{h}$). We then
repeat the cycle of compressions as many times as necessary. By Theorem 4 the
result will eventurally be constant, $Comp_{\infty}\left(  S\right)  $. Also,
$\boldsymbol{\ell}\left(  Comp_{\infty}\left(  S\right)  \right)
=\boldsymbol{\ell}\left(  S\right)  $. However, the compressed set may no
longer be stable. If we re-stabilize it, according to Theorem 6, either
$\boldsymbol{\ell}\left(  Stab_{\infty}(Comp_{\infty}\left(  S\right)
)\right)  >\boldsymbol{\ell}\left(  Comp_{\infty}\left(  S\right)  \right)  $
or $Stab_{\infty}(Comp_{\infty}\left(  S\right)  )$ is compressed. We may
assume then that $Stab_{\infty}\left(  Comp_{\infty}\left(  S\right)  \right)
$ is not only stable but compressed.

\underline{StOp $\boldsymbol{3}$-Subadditivation}: Subadditivation is a
Steiner operation based on the fact that $\left\vert \Theta\right\vert \left(
S(n,3);\ell\right)  $ is subadditive. From StOps $\boldsymbol{1}$ \&
$\boldsymbol{2}$ we may assume that our $\ell$-set $S,$ which minimizes
$\left\vert \Theta\left(  S\right)  \right\vert $ over all $S\subseteq
V_{S_{s,t}(n+1,3)}$ with $\left\vert S\right\vert =\ell$, is stablized,
compressed and has a $3$-tuple $\boldsymbol{\ell}\left(  S\right)  =\left(
\ell_{0},\ell_{1},\ell_{2}\right)  $ that is Lex-last over all such sets. If
$S$ is an initial segment of Lex order we are done. If not, $S$ is still an
ideal of $\mathcal{S}$-$\mathcal{O}\left(  S_{s,t}(n+1,3)\right)  $ and its
intersection, $S\cap\left(  \left\{  h\right\}  \times S_{s,t}(n,3)\right)  $,
is an initial $\ell_{h}$-segment of Lex$_{h}$ order. Let $h_{\min}%
=\min\left\{  h:\ell_{h}<3^{n}\right\}  $ and $h_{\max}=\max\left\{
h:\ell_{h}>0\right\}  $. So since $S\neq Lex^{-1}\left(  \left\{
1,2,...,\ell\right\}  \right)  $, we must have $h_{\min}<h_{\max}$. Then let
$S^{\prime}=S-S\cap\left(  \left\{  h_{\min}\right\}  \times S_{s,t}%
(n,3)\right)  -S\cap\left\{  h_{\max}\right\}  \times S_{s.t}(n,3)$ and we
have
\[
SubAdd(S)=\left\{
\begin{tabular}
[c]{l}%
$S^{\prime}+\left\{  h_{\min}\right\}  \times Lex_{h_{\min}}^{-1}\left(
\ell_{h_{\min}}+\ell_{h_{\max}}\right)  $\\
$\ \ \ \ \ \ \ \ \ \ \ \text{if }\ell_{h_{\min}}+\ell_{h_{\max}}\leq3^{n}$,\\
\\
$S^{\prime}+\left\{  h_{\min}\right\}  \times S_{s,t}(n,3)+\left\{  h_{\max
}\right\}  \times Lex_{h_{\max}}^{-1}\left(  \ell_{h_{\max}}+\ell_{h_{\min}%
}-3^{n}\right)  $\\
$\ \ \ \ \ \ \ \ \ \ \text{ if }\ell_{h_{\min}}+\ell_{h_{\max}}>3^{n}$.
\end{tabular}
\ \right.
\]
In either case $\left\vert SubAdd(S)\right\vert =\left\vert S\right\vert
=\ell$, so $SubAdd$ has property 1 of a StOp. Contributions to the difference,
$\Delta=\left\vert \Theta(SubAdd\left(  S\right)  ))\right\vert -\left\vert
\Theta\left(  S\right)  \right\vert $, come from 3 sources:

\begin{description}
\item[$\Delta_{I}$] The difference in the number of interior edges (within
some $\left\{  h\right\}  \times S(n,3)$) cut by $S$ and $SubAdd\left(
S\right)  )$,

\item[$\Delta_{E}$] The difference in the number of exterior edges (from
$\left\{  h_{1}\right\}  \times S(n,3)$ to $\left\{  h_{2}\right\}  \times
S(n,3)$, $h_{1}\neq h_{2}$) cut by $S$ and $SubAdd\left(  S\right)  )$,

\item[$\Delta_{C}$] The difference in the number of corner edges ($\left\{
v_{h},h^{n+1}\right\}  $) cut by $S$ and $SubAdd\left(  S\right)  $.
\end{description}

And then
\[
\Delta=\Delta_{I}+\Delta_{E}+\Delta_{C}%
\]

\qquad To prove that $SubAdd$ has Property 2 of a StOp ($\left\vert
\Theta\left(  SubAdd\left(  S\right)  \right)  \right\vert \leq\left\vert
\Theta\left(  S\right)  \right\vert $ or equivalently, $\Delta\leq0$), we
reduce to cases: First we consider the possibilities for $\mathfrak{i}%
=S\cap\left(  21^{n}\downarrow\right)  $, where $21^{n}\downarrow$ denotes the
ideal (in $\mathcal{S}$-$\mathcal{O}\left(  S_{s,t}(n+1,3)\right)  $)
generated by (\textit{i.e. }below) $21^{n}$. Since $21^{n}$ is the maximal
element of the component whose minimum element is $01^{n}$, $21^{n}\downarrow$
is exactly that same component. Since $S$ is stable and therefore an ideal in
$\mathcal{S}$-$\mathcal{O}\left(  S_{s,t}(n+1,3)\right)  $, $\mathfrak{i}%
=S\cap\left(  21^{n}\downarrow\right)  $ is an ideal of that component. As we
saw in Example 10, $21^{n}\downarrow$ has $9$ ideals so there are just $9$
possible intersections. Next we consider the possible values of $\left(
s,t\right)  $ with $s,t\geq0$ $\&$ $s+t\leq m$. In general there are
$\binom{m+2}{2}=\left(  m+2\right)  \left(  m+1\right)  /2$ such ordered
pairs. For $m=3$ then there are then 10 pairs. Since the endpoints of all
exterior edges of $S_{s,t}(n+1,3)$ are in $21^{n}\downarrow$, and the other
ends of corner edges are in $I\cup J$, $S\cap\left(  21^{n}\downarrow\right)
$ and $\left(  s,t\right)  $ determine $I_{h},J_{h},K_{h}$ which determine the
relative order of the digits, $0,1$ $\&$ $2$, in the definition of $Lex_{h}$.
Altogether these possibilities give $9\times10=90$ cases to be considered.
However, duality reduces that number by almost half (to $46$, two of the cases
being self-dual) and many of those cases are trivial. Some are not, however
and even have several subcases. For each case a range of values of $\ell
_{0},\ell_{1},\ell_{2}$ are possible. In each case we show that if $S$ is not
already $Lex^{-1}\left(  \left\{  1,2,...,\ell\right\}  \right)  $, then
subadditivation will nontrivially reduce it ($\boldsymbol{\ell}\left(
SubAdd(S)\right)  >_{Lex}\boldsymbol{\ell}(S)$).

For the first four cases below $(1a-1d)$, we proceed step-by-step. After that
we leave out routine steps. In order to verfy the proof, the reader should
fill in the missing steps.

\begin{enumerate}
\item $\mathfrak{i}=\emptyset$

\begin{enumerate}
\item $\left(  s,t\right)  =\left(  0,0\right)  $: Then
\begin{align*}
I_{0}  &  =\emptyset,\text{ }J_{0}=\emptyset\text{ so }K_{0}=\left\{
0,1,2\right\}  \text{ and }0<_{0}1<_{0}2,\\
I_{1}  &  =\emptyset,\text{ }J_{1}=\emptyset\text{ so }K_{1}=\left\{
0,1,2\right\}  \text{ and }0<_{1}1<_{1}2,\\
I_{2}  &  =\emptyset,\text{ }J_{2}=\emptyset\text{ so }K_{2}=\left\{
0,1,2\right\}  \text{ and }0<_{2}1<_{2}2.
\end{align*}

$10^{n}\notin S$ \& $0<_{1}1<_{1}2\Rightarrow\ell_{1}=0\Rightarrow h_{\max}=0$
so the conclusion ($\Delta=0$) is trivial.

\item $\left(  s,t\right)  =\left(  1,0\right)  $: Then
\begin{align*}
I_{0}  &  =\left\{  0\right\}  ,\text{ }J_{0}=\emptyset\text{ so }%
K_{0}=\left\{  1,2\right\}  \text{ and }0<_{0}1<_{0}2\text{,}\\
I_{1}  &  =\emptyset,\text{ }J_{1}=\emptyset\text{ so }K_{1}=\left\{
0,1,2\right\}  \text{ and }0<_{1}1<_{1}2\text{,}\\
I_{2}  &  =\emptyset,\text{ }J_{2}=\emptyset\text{ so }K_{2}=\left\{
0,1,2\right\}  \text{ and }0<_{2}1<_{2}2\text{.}%
\end{align*}

$10^{n}\notin S$ \& $0<_{1}1<_{1}2\Rightarrow\ell_{1}=0\Rightarrow h_{\max}=0$
so the conclusion ($\Delta=0$) is trivial.

\item $\left(  s,t\right)  =\left(  2,0\right)  $: Then
\begin{align*}
I_{0}  &  =\left\{  0\right\}  ,\text{ }J_{0}=\emptyset\text{ so }%
K_{0}=\left\{  1,2\right\}  \text{ and }0<_{0}1<_{0}2\text{,}\\
I_{1}  &  =\left\{  1\right\}  ,\text{ }J_{1}=\emptyset\text{ so }%
K_{1}=\left\{  0,2\right\}  \text{ and }1<_{1}0<_{1}2\text{,}\\
I_{2}  &  =\emptyset,\text{ }J_{2}=\emptyset\text{ so }K_{2}=\left\{
0,1,2\right\}  \text{ and }0<_{2}1<_{2}2\text{.}%
\end{align*}
$20^{n}\notin S$ \& $0<_{2}1<_{2}2\Rightarrow\ell_{2}=0$, so $h_{\max}=1$.
$01^{n}\notin S$ \& $0<_{0}1<_{0}2\Rightarrow\ell_{0}<3^{n}/2$.

\ \ \ \ $(i)$ If $\ell_{0}+\ell_{1}<3^{n}/2$ then $\Delta_{E}=0$ \&
$\Delta_{C}=1$ so $\Delta\leq0$ by Theorem

\ \ \ \ \ \ \ \ \ 7.

\ \ \ \ $(ii)$ If $\ell_{0}+\ell_{1}>3^{n}/2$ then $\Delta_{E}=1$ \&
$\Delta_{C}=1$ but $\Delta\leq0$ by Lemma

\ \ \ \ \ \ \ \ \ 10.

\item $\left(  s,t\right)  =\left(  3,0\right)  $: Then
\begin{align*}
I_{0}  &  =\left\{  0\right\}  ,\text{ }J_{0}=\emptyset\text{ so }%
K_{0}=\left\{  1,2\right\}  \text{ and }0<_{0}1<_{0}2\text{,}\\
I_{1}  &  =\left\{  1\right\}  ,\text{ }J_{1}=\emptyset\text{ so }%
K_{1}=\left\{  0,2\right\}  \text{ and }1<_{1}0<_{1}2\text{,}\\
I_{2}  &  =\left\{  2\right\}  ,\text{ }J_{2}=\emptyset\text{ so }%
K_{2}=\left\{  0,2\right\}  \text{ and }2<_{2}0<_{2}1\text{.}%
\end{align*}
$01^{n}\notin S$ \& $0<_{0}1<_{0}2\Rightarrow\ell_{0}<3^{n}/2$. Also,
$\ell_{2}\leq\ell_{1}\leq\ell_{0}$.

\ \ \ \ $(i)$ If $\ell_{2}=0$, the result ($\Delta=0$) follows as in Case $1c$.

\ \ \ \ $(ii)$ If $\ell_{2}>0$, $h_{\min}=0$ \& $h_{\max}=2$.

\ \ \ \ \ \ \ \ \ $\left(  A\right)  $ If $\ell_{0}+\ell_{2}<3^{n}/2$ then
$\Delta_{E}=0$ \& $\Delta_{C}=1$ so $\Delta\leq0$ by

\ \ \ \ \ \ \ \ \ \ \ \ \ Theorem 7.

\ \ \ \ \ \ \ \ \ $\left(  B\right)  $ If $\ell_{0}+\ell_{2}>3^{n}/2$ then
$\Delta_{E}=1$ \& $\Delta_{C}=1$ but $\Delta\leq0$ by

\ \ \ \ \ \ \ \ \ \ \ \ \ \ Lemma 10.

\item $\left(  s,t\right)  =\left(  0,1\right)  $: Then $0<_{h}1<_{h}2$, for
$h=0,1,2$ and the result ($\Delta=0$) follows as in Case 1a.

\item $\left(  s,t\right)  =\left(  1,1\right)  $: Then $0<_{h}1<_{h}2$, for
$h=0,2$ but $1<_{1}0<_{1}2$. $0<\ell_{1}\leq\ell_{0}<3^{n}/2$ \& $\ell_{2}=0$.
$h_{\min}=0$ $\&$ $h_{\max}=1$. $\Delta_{E}\leq1$ \& $\Delta_{C}=0$, so by
Theorem 7, $\Delta\leq0$.

\item $\left(  s,t\right)  =\left(  2,1\right)  $: Then $0<_{0}1<_{0}2$,
$1<_{1}0<_{1}2$ and $2<_{2}0<_{2}1$. The result ($\Delta\leq0$) follows as in
Case 1d.

\item $\left(  s,t\right)  =\left(  0,2\right)  $: Then $0<_{h}1<_{h}2$, for
$h=0,2$ but $1<_{1}0<_{1}2$. The result ($\Delta\leq0$) follows as in Case 1f.

\item $\left(  s,t\right)  =\left(  1,2\right)  $: Then $0<_{0}1<_{0}2$,
$1<_{1}0<_{1}2$ and $2<_{2}0<_{2}1$. The result ($\Delta\leq0$) follows as in
Case 1d.

\item $\left(  s,t\right)  =\left(  0,3\right)  $: Then $0<_{0}1<_{0}2$,
$1<_{1}0<_{1}2$ and $2<_{2}0<_{2}1$. The result ($\Delta\leq0$) follows as in
Case 1d.
\end{enumerate}

\item $\mathfrak{i}=\left\{  01^{n}\right\}  $: Since $01^{n}\in S$,
$10^{n}\in I_{1}$ and so $Lex_{1}^{-1}\left(  1\right)  =10^{n}$ no matter
what $\left(  s,t\right)  $ is. Since $10^{n}\notin S$, $\ell_{1}=0=\ell_{2}$
and our result is trivial ($\Delta=0$).

\item $\mathfrak{i}=\left\{  01^{n},02^{n}\right\}  $: The same reasoning as
for Case 2 applies and $\Delta=0$.

\item $\mathfrak{i}=\left\{  01^{n},10^{n}\right\}  $:

\begin{enumerate}
\item $\left(  s,t\right)  =\left(  0,0\right)  $: Then $1<_{0}0<_{0}2$ \& for
$h=1,2$, $0<_{h}1<_{h}2$. Therefore $0<\ell_{1}\leq\ell_{0}<3^{n}$ \&
$\ell_{2}=0$. Therefore $h_{\min}=0$ \& $h_{\max}=1$.

$\quad(i)$ If $\ell_{0}<3^{n}/2$ $\left(  \Rightarrow\ell_{1}<3^{n}/2\text{ \&
}\ell_{0}+\ell_{1}<3^{n}\right)  $,

\ \ \ \ \ \ \ \ \ $(A)$ If $\ell_{0}+\ell_{1}<3^{n}/2$, $\Delta_{E}=1$ \&
$\Delta_{C}=0$ so Theorem 7 implies

\ \ \ \ \ \ \ \ \ \ \ \ \ that $\Delta\leq0$.

\ \ \ \ \ \ \ \ $\ \left(  B\right)  $ If $\ell_{0}+\ell_{1}>3^{n}/2$,
$\Delta_{E}=1$ \& $\Delta_{C}=1$ but Lemma 10 implies

\ \ \ \ \ \ \ \ \ \ \ \ \ that $\Delta\leq0$

$\quad(ii)$ If $\ell_{0}>3^{n}/2$,

$\ \ \ \ \ \ \ \ (A)$ If $\ell_{0}+\ell_{1}<3^{n}$ ($\Rightarrow\ell_{1}%
<3^{n}/2$), $\Delta_{E}=1$ \& $\Delta_{C}=0$ so

\ \ \ \ \ \ \ \ \ \ \ \ \ Theorem 7 implies that $\Delta\leq0$.

\ \ \ \ \ \ \ $(B)$ If $\ell_{0}+\ell_{1}=3^{n}$, $\Delta_{E}=2$ \&
$\Delta_{C}=0$, but by Theorem 2,

$\ \ \ \ \ \ \ \ \ \ \ \ \Delta_{I}\leq0-\left(  2+2\right)  =-4$, so
\begin{align*}
\Delta &  =\Delta_{I}+\Delta_{E}+\Delta_{C}\\
&  \leq\left(  -4\right)  +2+0\\
&  =-2<0\text{. }%
\end{align*}

\ \ \ \ \ \ \ \ \ $(C)$ If $\ell_{0}+\ell_{1}>3^{n}$, then $\Delta_{E}=1$ \&
$\Delta_{C}\leq0$, so Theorem 7

\ \ \ \ \ \ \ \ \ \ \ \ \ implies that $\Delta\leq0$.

\item $\left(  s,t\right)  =\left(  1,0\right)  $: Then $0<_{h}1<_{h}2$, for
$h=0,1,2$. Therefore $3^{n}/2<\ell_{0}<3^{n}$, $0<\ell_{1}\leq$ $\ell_{0}$ \&
$\ell_{2}=0$, so $h_{\min}=0$ \& $h_{\max}=1$.

$\quad(i)$ If $\ell_{1}<3^{n}/2$

\ \ \ \ \ \ \ \ \ $(A)$ If $\ell_{0}+\ell_{1}<3^{n}$, $\Delta_{E}=1$ \&
$\Delta_{C}=0$ so Theorem 7 implies

\ \ \ \ \ \ \ \ \ \ \ \ \ that $\Delta\leq0$.

\ \ \ \ \ \ \ \ \ $(B)$ If $\ell_{0}+\ell_{1}=3^{n}$, $\Delta_{E}=2$ \&
$\Delta_{C}=0$ but by Theorem 2,

$\ \ \ \ \ \ \ \ \ \ \ \ \ \Delta\leq0$.

\ \ \ \ \ \ \ \ \ $(C)$ If $\ell_{0}+\ell_{1}>3^{n}$, $\Delta_{E}=1$ \&
$\Delta_{C}\leq0$ so Theorem 7 implies

\ \ \ \ \ \ \ \ \ \ \ \ \ that $\Delta\leq0$.

$\quad(ii)$ If $\ell_{1}>3^{n}/2$ ($\Rightarrow\ell_{0}+\ell_{1}>3^{n}$), then
$\Delta_{E}=1$ \& $\Delta_{C}\leq0$ so by

\ \ \ \ \ \ \ Theorem 7, $\Delta\leq0$.

\item $\left(  s,t\right)  =\left(  2,0\right)  $: Then $0<_{h}1<_{h}2$, for
$h=0,1,2$. Therefore $3^{n}/2<\ell_{0}<3^{n}$, $0<\ell_{1}\leq$ $\ell_{0}$ \&
$\ell_{2}=0$, so $h_{\min}=0$ \& $h_{\max}=1$.

$\quad(i)$ If $\ell_{1}<3^{n}/2$ the result follows essentially as for
$\left(  4bi\right)  $.

$\quad(ii)$ If $\ell_{1}>3^{n}/2$ ($\Rightarrow\ell_{0}+\ell_{1}>3^{n}$),

\ \ \ \ \ \ \ \ \ $(A)$ If $\ell_{0}+\ell_{1}<3^{n+1}/2$, $\Delta_{E}=1$ \&
$\Delta_{C}=1$ but by the dual of

\ \ \ \ \ \ \ \ \ \ \ \ \ Lemma 10 (Corollary 5), $\Delta\leq0$.

\ \ \ \ \ \ \ \ \ $(B)$ If $\ell_{0}+\ell_{1}>3^{n+1}/2$, $\Delta_{E}=1$ \&
$\Delta_{C}=0$ so by Theorem 7,

$\ \ \ \ \ \ \ \ \ \ \ \ \ \Delta\leq0$.

\item $\left(  s,t\right)  =\left(  3,0\right)  $: Then $0<_{h}1<_{h}2$, for
$h=0,1$ \& $2<_{2}0<_{2}1$. $3^{n}/2<\ell_{0}<3^{n}$, $0<\ell_{1}\leq$
$\ell_{0}$ \& $\ell_{2}\leq\min\left\{  \ell_{1},3^{n}/2\right\}  $.

$\quad(i)$ If $\ell_{2}=0$ then the result follows as for $\left(  4c\right)
$.

$\quad(ii)$ If $\ell_{2}>0$ then $h_{\min}=0$ \& $h_{\max}=2$,

$\qquad\quad(A)$ If $\ell_{0}+\ell_{2}<3^{n}$, then $\Delta_{E}=0$ $\&$
$\Delta_{C}=1$ so by Theorem 7,

$\ \ \ \ \ \ \ \ \ \ \ \ \ \Delta\leq0$.

\ \ \ \ \ \ \ \ \ $(B)\ \ $If $\ell_{0}+\ell_{2}=3^{n}$, then $\Delta_{E}=1$
$\&$ $\Delta_{C}=1$ so by Theorem 2,

$\ \ \ \ \ \ \ \ \ \ \ \ \ \Delta\leq0$.

\ \ \ \ \ \ \ \ \ $\left(  C\right)  $ \ If $\ell_{0}+\ell_{2}>3^{n}$, then
$\Delta_{E}=1$ $\&$ $\Delta_{C}=0$ so by Theorem 7,

$\ \ \ \ \ \ \ \ \ \ \ \ \ \Delta\leq0$.

\item $\left(  s,t\right)  =\left(  0,1\right)  $: Then $1<_{0}0<_{0}2$ \& for
$h=0,1$, $0<_{h}1<_{h}2$. Therefore $0<\ell_{0}<3^{n}$, $0<\ell_{1}\leq
\ell_{0}$ \& $\ell_{2}=0$, so $h_{\min}=0$ \& $h_{\max}=1$.

$\quad(i)$ If $\ell_{0}+\ell_{1}<3^{n}$ then $\Delta_{E}=1$ $\&$ $\Delta
_{C}=0$ so Theorem 7 implies

\ \ \ \ \ \ \ that $\Delta\leq0$.

$\quad(ii)$ If $\ell_{0}+\ell_{1}=3^{n}$, then $\Delta_{E}=2$ $\&$ $\Delta
_{C}=0$ but by Theorem 2,

$\ \ \ \ \ \ \ \Delta\leq0$.

$\quad(iii)$ If $\ell_{0}+\ell_{1}>3^{n}$, then $\Delta_{E}=1$ $\&$
$\Delta_{C}=0$ so Theorem 7 implies

\ \ \ \ \ \ \ that $\Delta\leq0$.

\item $\left(  s,t\right)  =\left(  1,1\right)  $: Then $0<_{h}1<_{h}2$, for
$h=0,1,2$. Therefore $3^{n+1}/2<\ell_{0}<3^{n+1}$, $0<\ell_{1}\leq$ $\ell_{0}$
\& $\ell_{2}=0$. It then follows as in Case 4c that $\Delta\leq0$.

\item $\left(  s,t\right)  =\left(  2,1\right)  $: Then for $h=0,1$,
$0<_{h}1<_{h}2$ \& $2<_{2}0<_{2}1$. Therefore, $3^{n}/2<\ell_{0}<3^{n}$,
$0<\ell_{1}\leq$ $\ell_{0}$ \& $\ell_{2}\leq\min\left\{  \ell_{1}%
,3^{n}/2\right\}  $. Therefore, $h_{\min}=0$.

$\quad(i)$ If $\ell_{2}=0$ then the result, $\Delta\leq0$, follows as in 4c.

$\quad(ii)$ If $\ell_{2}>0$ then $h_{\max}=2$. Whatever $\ell_{0}+\ell_{2}$
is, $\Delta_{E}\leq1$ \& $\Delta_{C}=0$

\ \ \ \ \ \ \ so Theorem 7 implies that $\Delta\leq0$.

\item $\left(  s,t\right)  =\left(  0,2\right)  $: Then $1<_{0}0<_{0}2$ and
for $h=0,1$, $0<_{h}1<_{h}2$. Therefore $0<\ell_{0}<3^{n}$, $0<\ell_{1}%
\leq\ell_{0}$ \& $\ell_{2}=0$, so $h_{\min}=0$ \& $h_{\max}=1$. The subsequent
cases are the same as Case 4e.

\item $\left(  s,t\right)  =\left(  1,2\right)  $: Then for $h=0,1$,
$0<_{h}1<_{h}2$ \& $2<_{2}0<_{2}1$. Therefore, $3^{n}/2<\ell_{0}<3^{n}$,
$0<\ell_{1}\leq$ $\ell_{0}$ \& $\ell_{2}\leq\min\left\{  \ell_{1}%
,3^{n}/2\right\}  $. Therefore, $h_{\min}=0$.

$\quad(i)$ If $\ell_{2}=0$ ($\Rightarrow h_{\max}=1$), the result, $\Delta
\leq0$, follows as in 4c.

$\quad(ii)$ If $\ell_{2}>0$, ($\Rightarrow h_{\max}=2$), then $\Delta_{E}%
\leq1$ $\&$ $\Delta_{C}=0$ so Theorem 7

\ \ \ \ \ \ \ implies that $\Delta\leq0$.

\item $\left(  s,t\right)  =\left(  0,3\right)  $: Then $1<_{0}0<_{0}2$,
$0<_{1}1<_{1}2$ \& $2<_{2}0<_{2}1$. Therefore $3^{n}/2<\ell_{0}<3^{n}$,
$0<\ell_{1}\leq$ $\ell_{0}$ \& $\ell_{2}\leq\min\left\{  \ell_{1}%
,3^{n}/2\right\}  $ so $h_{\min}=0$. The result, $\Delta\leq0$, then follows
as in Case 4i.
\end{enumerate}

\item $\mathfrak{i}=\left\{  01^{n},10^{n},02^{n}\right\}  $: All these cases
are trivial ($\Delta=0$) because
\[
\left(  0\in I_{2}\text{ \& }20^{n}\notin S\right)  \Rightarrow\left(
\ell_{2}=0\right)
\]
and
\[
(2\in K_{0}\text{ \& }02^{n}\in S)\Rightarrow\left(  \ell_{0}=3^{n}\right)  .
\]

\end{enumerate}

All other stable \& compressed sets are dual to one of those we have
considered above, so our proof is complete.
\end{proof}
\end{theorem}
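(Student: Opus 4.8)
The plan is to prove the stronger Conjecture~2 for $m=3$ by induction on $n$, since Conjecture~2 with $I=K=\emptyset$ is exactly Conjecture~1, which asserts that $S(n,3)$ has $Lex$-nested solutions. The base case $n=1$ is immediate: by Corollary~1 of Section~4.4 the stabilization-order of $S_{s,t}(1,3)$ is the same as that of $S(1,3)=K_3$, namely the total order $0<1<2$, so every ideal is an initial $Lex$-segment (and every $\ell$-set is optimal in $K_3$ anyway). For the inductive step, assume the result for $n$, fix $S\subseteq V_{S_{s,t}(n+1,3)}$ with $|S|=\ell$ minimizing $|\Theta_{s,t}(\cdot)|$ among $\ell$-sets, and transform $S$ into $Lex^{-1}(\{1,\dots,\ell\})$ through three Steiner operations, none of which increases the edge-boundary.

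First apply stabilization: by Theorem~5 each $Stab_{i,j}$ is a Steiner operation on $S_{s,t}(n+1,3)$ whose stabilization-order coincides with that of $S(n+1,3)$, so after passing to $Stab_\infty$ we may assume $S$ is an ideal of $\mathcal S$-$\mathcal O(S(n+1,3))$; in particular $\ell_0\ge\ell_1\ge\ell_2$ for the row sizes $\ell_h=|S\cap(\{h\}\times S_{s,t}(n,3))|$. Next apply compression: the inductive hypothesis says $Lex_h$ is optimal on each sub-Sierpinski graph $\{h\}\times S_{s,t}(n,3)$ once its digits $\{0,1,2\}$ are re-indexed by $\pi=I_hJ_hK_h$, so by Theorem~3 each $Comp_{Lex_h}$ satisfies Properties~1 and~2 of a Steiner operation, and although it is only nonmonotone, Theorem~4 guarantees that the cyclic composition $Comp_\infty$ is well defined (the potential $\rho$ strictly decreases at every genuine change). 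Compression may destroy stability, so re-stabilize; Theorem~6 then lets us assume $S$ is simultaneously stable and compressed, and among all optimal $\ell$-sets with these properties we take one whose triple $\boldsymbol\ell(S)=(\ell_0,\ell_1,\ell_2)$ is $Lex$-last, so that $\boldsymbol\ell(S)<\boldsymbol\ell(Lex^{-1}(\{1,\dots,\ell\}))$ unless we are already done.

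Finally apply subadditivation. With $h_{\min}=\min\{h:\ell_h<3^n\}$ and $h_{\max}=\max\{h:\ell_h>0\}$, the operation $SubAdd$ empties row $h_{\max}$ into row $h_{\min}$ (wrapping modulo $3^n$ when $\ell_{h_{\min}}+\ell_{h_{\max}}>3^n$); it preserves $|S|$, and what must be shown is $\Delta=|\Theta_{s,t}(SubAdd(S))|-|\Theta_{s,t}(S)|\le 0$. Decompose $\Delta=\Delta_I+\Delta_E+\Delta_C$ into its interior-edge, exterior-edge, and corner-edge parts. The interior part $\Delta_I\le 0$ rests on the (strong) subadditivity of $|\Theta(L^{-1})|(n,3;\cdot)$; this is the genuine crux, proved in Theorem~7 by writing $|\Theta(L^{-1})|=|\Theta_0|+|\Theta_1|$ (Lemma~3), checking that $|\Theta_0|$ is subadditive and that $|\Theta_1|(n+1,3;\cdot)$ is a $3$-replicate of a subadditive function (Lemmas~4,~7,~9), and then upgrading subadditivity to \emph{strict} subadditivity by showing that the common zeros of $\Delta_0$ never coincide with zeros of $\Delta_1$, which uses the structural Lemmas~5 and~6. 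The exterior and corner terms $\Delta_E+\Delta_C$ are then bounded using Theorem~2 when $\ell_{h_{\min}}+\ell_{h_{\max}}=3^n$ (a whole copy of $S(n,3)$ is filled and its at-least-$2$ interior boundary edges are saved), Lemma~10 when $0<\ell_{h_{\min}}\le\ell_{h_{\max}}<3^n/2$ with $\ell_{h_{\min}}+\ell_{h_{\max}}>3^n/2$, and its dual Corollary~5 in the reflected regime.

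I expect the main obstacle to be the ensuing case analysis. The signs of $\Delta_E$ and $\Delta_C$ depend on how the relative order of the digits $0,1,2$ inside $Lex_h$ has been permuted by the exterior edges touching $\{h\}\times S_{s,t}(n,3)$, and this is governed jointly by the pair $(s,t)$ and by the ideal $\mathfrak i=S\cap(21^n\downarrow)$ of the component of $\mathcal S$-$\mathcal O(S_{s,t}(n+1,3))$ generated below $21^n$. Since $21^n\downarrow$ has $9$ ideals and there are $\binom{5}{2}=10$ admissible pairs $(s,t)$ with $s,t\ge 0$ and $s+t\le 3$, one has $90$ configurations, cut down by duality to about $46$; in each one a range of $(\ell_0,\ell_1,\ell_2)$ must be subdivided according to whether $\ell_{h_{\min}}+\ell_{h_{\max}}$ lies below, at, or above $3^n$, and below or above $3^{n+1}/2$, and in each subcase one of Theorem~7, Lemma~10, Corollary~5, or Theorem~2 must be applied to conclude $\Delta\le 0$ (with a strict gain in $\boldsymbol\ell$ whenever $S\ne Lex^{-1}(\{1,\dots,\ell\})$). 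Keeping these bookkeeping threads consistent across all cases, rather than any single estimate, is the hard part; the non-monotonicity of compression is only a secondary technicality, already neutralized by Theorem~4.
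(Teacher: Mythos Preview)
Your proposal is correct and follows essentially the same approach as the paper's proof: induction on $n$ for the stronger Conjecture~2, with the inductive step carried out by the same three Steiner operations (stabilization via Theorem~5, cyclic compression via Theorems~3, 4 and~6, then subadditivation), the same decomposition $\Delta=\Delta_I+\Delta_E+\Delta_C$, and the same case analysis indexed by the nine ideals of $21^n\!\downarrow$ crossed with the ten pairs $(s,t)$, reduced by duality. One small point of phrasing: Theorem~2, Lemma~10, and Corollary~5 are not used to bound $\Delta_E+\Delta_C$ directly but rather to sharpen the bound on $\Delta_I$ (to $\le -2$ or $\le -4$) in precisely those subcases where $\Delta_E+\Delta_C$ is positive, so that the total $\Delta$ remains $\le 0$; otherwise your outline matches the paper's argument and correctly locates the hard work in the case-by-case bookkeeping.
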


\section{Conclusions \& Comments}

\subsection{Possibilities for $m>3$}

The complexity of the final phase of the arguement (Section 5) prohibited us
from carrying it through for $m>3.$ For $m=3$ the initial estimate of the
number of cases was $90$ (the actual number of cases (last part of Section 5)
was 41). The corresponding estimate for $m=4$ is the product of $28$
($=\left\vert \mathfrak{I}\left(  \mathcal{S}\text{-}\mathcal{O}\left(
32\downarrow\right)  \right)  \right\vert $, see Figure 5) by $15$
($=\binom{4+2}{2}$) which is $420$. $m=3$ already pushed our limits for hand
computation so we are hoping to do $m\geq4$ by computer.

\subsection{Our Logical Strategy (3 \textit{StOps})}

\begin{enumerate}
\item This strategy is modeled on that of the first proof of Theorem 1.1 in
\cite{Har04}. That proof for had its origins in our first paper (1962) but was
only included in the monograph \cite{Har04} to show how complicated proofs of
such theorems could be. It was sufficiently complicated that in the original
paper we missed a case. Fortunately A. J. Bernstein noticed the oversight and
filled in the missing arguement (Lemma 1.3 of \cite{Har04}). After developing
the theory of Steiner operations in Chapters 2 \& 3 of \cite{Har04},
stabilization and compression were used to give a relatively short (and easily
verified) proof of Theorem 1.1 (Sec. 3.3.5 of \cite{Har04}). In this paper
Bernstein's lemma developed into the Steiner operation subaddification.

\qquad More recently, in \cite{Har15}, we showed that initial $\ell$-segments
of Hales order (see \cite{Har04}, p. 56) maximize $Type$ over all stable
$\ell$-sets of vertices of the $n$-cube, $Q_{n}$. That proof, based on the
self-similarity of the stabilization order of $Q_{n}$ (also known as its
Bruhat order (see \cite{Har04}, Sect. 5.2)) seemed novel at first, until we
realized that the first proof of Theorem 1.1 in \cite{Har04}, by not using the
full power inherent in compression, treated $Q_{n}$ as a self-similar
structure. This suggested the possibility of proving isoperimetric theorems
for other self-similar structures. The first target for our project to solve
isoperimetric problems on other self-similar structures was the $EIP$ on the
Sierpinski gasket graph. $SG_{n}$ is self-similar but has no product
decomposition and relatively little symmetry (so a proof technique beyond
compression and stabilization is required).

\item The proof of our Main Theorem is by induction, but the reduction of
$S_{s,t}(n+1,3)$ to $S_{s,t}(n,3)$ is based on three different Steiner
operations, stabilization, compression \& subadditivation. Each reduces the
number of possible solution $\ell$-sets, the last reducing it to just one,
$Lex^{-1}\left(  \left\{  1,2,...,\ell\right\}  \right)  $. This logic, which
follows naturally from the notion of morphism, is essentially the same as that
of Steiner's "proof" of the classical isoperimetric theorem in the Euclidean
plane (c. 1840): If $S$ is any closed set other than a disk, it can be
transformed by symmetrization (the original Steiner operation) to a set having
the same area and smaller boundary. Therefore the only possible solution set
(up to isomorphism) is a disk.

Steiner's symmetrization was (as far as we know) the first noninvertable
morphism in mathematics. Symmetry and similarity were known to Euclid, of
course, and Galois made use of symmetry. However, as Weierstrass pointed out,
Steiner's epoch-making insight was incomplete: It was still logically possible
that the isoperimetric problem had no solution; that the greatest lower bound
of all boundary lengths might not be achieved by any set. Weierstrass was a
pioneer in functional analysis but it took another 40 years to clear up this
last detail. Combinatorial StOps do not suffer from this problem. By
finiteness, if there is only one set fixed by a StOp, it has to be a solution.

\item The flexability and adaptability of Steiner operations continues to be
amazing \& gratifying. The 3 StOps in the proof of our theorem, though based
on those in the proof of Theorem 1.1 of \cite{Har04}, had to be substantially
modified to accomplish the purpose. They held up well. However, the additional
complications made the complexity of its role model, the proof of Theorem 1.1
of \cite{Har04}, seem insignifcant by comparison.
\end{enumerate}

\subsection{A Coincidence?}

$K_{m}^{n}$ and $S(n,m)$ have the same set of vertices, $\left\{
0,...,m-1\right\}  ^{n}$ but very different sets of edges. It is curious that
their $EIPs$ have a common solution, initial segments of Lex order.

\subsection{The Nested Solutions Property}

The nested solutions property is shared by most isoperimetric problems, finite
and continuous, that have been solved. However, continuous isoperimetric
problems may also be solved by variational means, even if they do not have
nested solutions. Variational methods do not work well for combinatorial
isoperimetric problems without nested solutions because the solution sets
become chaotic near the break. What has worked on several combinatorial
isoperimetric problems lacking nested solutions is passage to a continuous
limit and solving the resulting continuous isoperimetric problem (see Chapter
10 of \cite{Har04}). With Sierpinski graphs however, we have (surprisingly)
the opposite situation: $S(n,m)$, the generalized \& expanded Sierpinski
graph, is fractal but solutions of its isoperimetric problems are nested. The
main challenge was to adapt compression to the recursive structure of $S(n,m)$.

\subsection{The Isoperimetric Problem on SG$_{\infty}$}

The history of continuous variational problems (such as the classical
isoperimetric problem in the plane and the brachystochrone problem) shows that
they present two challenging questions:

\begin{enumerate}
\item What is the solution?

\item (Assuming we "know" the solution) Can we prove it?
\end{enumerate}

Galileo erred on question 1, guessing that the solution of the brachystochrone
problem was the arc of a circle. And Archimedes knew that the solution of the
isoperimetric problem was the disk but a logically rigorous proof eluded
mathematicians until the late 19$^{th}$ century. So what are the answers to
these questions for the isoperimetric problem on the Sierpinski gasket?

For Question 1 we claim that $SG_{\infty}$ has nested solutions given by the
function, $\eta^{-1}:\left[  0,1\right]  \rightarrow SG_{\infty}$, defined by
\[
\eta^{-1}\left(  a\right)  =\left(  \sum_{a_{i}=0}2^{-i},\sum_{a_{i}=1}%
2^{-i},\sum_{a_{i}=2}2^{-i}\right)  \text{,}%
\]
where $a=\sum_{i=1}^{\infty}a_{i}3^{-i}$ is the base $3$ representation of
$a\in\left[  0,1\right]  $ and $SG_{\infty}$ is constructed in $\mathbb{R}^{3}
$ starting with the triangle whose vertices are $\left(  1,0,0\right)  $,
$\left(  0,1,0\right)  $ \& $\left(  0,0,1\right)  $. If $a$ is a triadic
rational then it has two base $3$ representations (such as $1/3=\sum_{i=1}%
^{1}3^{-i}=\sum_{i=2}^{\infty}2\cdot3^{-i}$). In that case use the infinite
one in calculating $\eta^{-1}\left(  a\right)  $.

\begin{example}
$\eta^{-1}\left(  1/3\right)  $%
\begin{align*}
&  =\eta^{-1}\left(  \sum_{i=2}^{\infty}2\cdot3^{-i}\right) \\
&  =\left(  \sum_{i=1}^{1}2^{-i},0,\sum_{i=2}^{\infty}2^{-i}\right) \\
&  =\left(  1/2,0,1/2\right)  \text{.}%
\end{align*}
$.$
\end{example}

\begin{example}
$\eta^{-1}\left(  1/2\right)  $%
\begin{align*}
&  =\eta^{-1}\left(  \sum_{i=1}^{\infty}1\cdot3^{-i}\right) \\
&  =(0,\sum_{i=1}^{\infty}2^{-i},0)\\
&  =\left(  0,1,0\right)  .
\end{align*}

\end{example}

\begin{example}
$\eta^{-1}\left(  1/6\right)  $%
\begin{align*}
&  =\eta^{-1}\left(  \sum_{i=2}^{\infty}1\cdot3^{-i}\right) \\
&  =(\sum_{i=1}^{1}2^{-i},\sum_{i=2}^{\infty}2^{-i},0)\\
&  =\left(  1/2,1/2,0\right)  .
\end{align*}

\end{example}

\bigskip This function, $\eta^{-1}$, is the limit, as $n\rightarrow\infty$, of
the composition of $Lex^{-1}:\left\{  1,2,...,3^{n}\right\}  $ $\rightarrow
S(n,3)$ with the embedding, $y:S(n,3)\rightarrow\mathbb{R}^{3}$, of Section
3.1.3. The insight behind our claim is that as $n\rightarrow\infty$, the $EIP$
on $S(n,3)$ converges to the natural isoperimetric problem on the Sierpinski
gasket. Intuitively, as $n\rightarrow\infty$, and $S(n,3)\rightarrow
SG_{\infty}$, the edges, of length $1/2^{n}$, are shrinking to $0$. So that,
in the limit, the edge-boundary becomes the topological boundary. The
isoperimetric profile of the Sierpinski gasket is then the limit of the
edge-isoperimetric profile of $S(n,3)$ as $n\rightarrow\infty$: If
$\lambda\left(  a\right)  $ denotes the minimum "length" of the topological
boundary of any closed set in $SG_{\infty}$ of "area" $a$, then
\[
\lambda\left(  a\right)  =\left\{
\begin{array}
[c]{ll}%
\left\vert \Theta\right\vert \left(  S\left(  n,3\right)  ;\ell\right)  &
\text{if }a=\ell/3^{n}\text{, a triadic rational,}\\
\omega\text{ } & \text{(countable }\infty\text{) otherwise.}%
\end{array}
\right.
\]
For a triadic rational, $\ell/3^{n}$, $0<\ell<3^{n}$, $\lambda\left(
a\right)  $ is actually given by the formula of Proposition 1 with $m=3:$%
\[
\lambda\left(  \ell/3^{n}\right)  =\sum_{h=1}^{n}\ell_{h}\left(  3-\ell
_{h}\right)  +\left\vert \ell_{h}^{\prime}-\ell_{h}\right\vert -\ell
_{h}\text{,}%
\]
where $0<\ell<3^{n}$,
\begin{align*}
\ell+1  &  =\sum_{h=1}^{n}\ell_{h}3^{n-h}\text{ and}\\
\ell_{h}^{\prime}  &  =1+\max\left\{  j:j^{n-h}\leq_{Lex}\left(  \ell
_{h+1},\ell_{h+2},...,\ell_{n}\right)  \right\}  .
\end{align*}
Note that $\lambda\left(  a\right)  $ is countable infinity except at a
countably infinite set (which is necessarily of measure zero!)!

This result, the solution of a continuous isoperimetric problem by
combinatorial means, is the realization of a longheld fantasy of the author.
Having appropriated so much from the classical analytic theory of
isoperimetric problems, it is gratifying to be able to give something back.
Also, we are grateful to Michel Lapidus, UCR colleague and expert on fractal
geometry, for his encouragement and intellectual support of this project.

\section{\bigskip Appendix}

\subsection{How Many Components in $\mathcal{S}$-$\mathcal{O}\left(
S(n,m)\right)  ?$}

A vertex $v\in\left\{  0,1,...,m-1\right\}  ^{n}$ may be thought of as an
ordered \ partition of $\left\{  1,...,n\right\}  $ into $m$ blocks:
$v=\left(  v_{1},v_{2},...,v_{n}\right)  $ corresponds to $p\left(  v\right)
=\left(  v^{-1}\left(  0\right)  ,v^{-1}\left(  1\right)  ,...,v^{-1}\left(
m-1\right)  \right)  $. Note that if $i$ does not appear in $v$, then
$v^{-1}\left(  i\right)  =\emptyset$, the empty set. The covering relations,
$v\lessdot v^{\prime}$, of $\mathcal{S}$-$\mathcal{O}\left(  S(n,m)\right)  $
are given by transpositions of consecutive integers, $i\left(  i+1\right)  $
such that $\min\left\{  j:v_{j}=i\right\}  <\min\left\{  j:v_{j}=i+1\right\}
$ operating on the coordinates of $v$ to give $v^{\prime}$. Those
transpositions generate all permutations of the components of $p\left(
v\right)  $ so components are characterized by their common unordered
\ partition of $\left\{  1,...,n\right\}  $ into $m$ or fewer blocks. Every
component has a unique minimum element in $C_{0}^{\prime}$, so the number of
components is the same as $\left\vert C_{0}^{\prime}\right\vert $. This number
is $\sum_{k=1}^{m}S_{n,k}$, $S_{n,k}$ being the Sterling number of the second
kind. The $S_{n,k}$'s are well known (see The On-line Encyclopedia of Integer Sequences).

\subsection{The Derivation of Compression for Products}

In \cite{Har04} compression is a Steiner operation on a product of graphs,
$G\times H$, based on at least one of the factors (say $G$) having nested
solutions. If $\left\vert S\cap\left(  G\times\left\{  w\right\}  \right)
\right\vert =\ell_{w}$, then

\begin{description}
\item[Question] \bigskip What lower bound can be inferred on $\left\vert
\Theta\left(  S\right)  \right\vert $ for $S\subseteq V_{G\times H}$,
$\left\vert S\right\vert =\ell^{\prime}$?

\item[Answer] $\left\vert \Theta\left(  S\right)  \right\vert \geq\sum_{w\in
V_{H}}\left\vert \Theta\right\vert \left(  G;\ell_{w}\right)  +\sum_{\left\{
w_{1},w_{2}\right\}  \in E_{H}}\left\vert \ell_{w_{1}}-\ell_{w_{2}}\right\vert
$.
\end{description}

This lower bound can be achieved if $\eta:V_{G}\rightarrow\left\{
1,2,...,\left\vert V_{G}\right\vert \right\}  $ is a numbering of the vertices
of $G$ such that $\left\vert \Theta\left(  \eta^{-1}\left(  \left\{
1,2,...,\ell\right\}  \right)  \right)  \right\vert =\left\vert \Theta
\right\vert \left(  G;\ell\right)  $. The existence of such a numbering is the
definition of nested solutions. See Section 1.2.2.

We define \textbf{compression} by
\[
Comp_{\eta,G\times H}\left(  S\right)  =%
%TCIMACRO{\dbigcup \limits_{w\in V_{H}}}%
%BeginExpansion
{\displaystyle\bigcup\limits_{w\in V_{H}}}
%EndExpansion
\left(  \eta^{-1}\left(  \left\{  1,2,...,\ell_{w}\right\}  \right)
\times\left\{  w\right\}  \right)
\]
and then

\begin{theorem}
(Theorem 3.4 of \cite{Har04}) $Comp_{\eta,G\times H}$ is a Steiner operation.
\end{theorem}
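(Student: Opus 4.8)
The plan is to verify the three defining properties of a Steiner operation in turn, with Property 2 being the step where the nested-solutions hypothesis on $G$ does all the work.

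Property 1 is immediate. Since $\eta$ is a bijection, $|\eta^{-1}(\{1,\dots,\ell_w\})| = \ell_w$ for each $w\in V_H$, and the fibers $G\times\{w\}$ are pairwise disjoint, so $|Comp_{\eta,G\times H}(S)| = \sum_{w\in V_H}\ell_w = |S|$. Property 3 (monotonicity) is nearly as quick: if $S\subseteq T$, then for every $w$ the fiber counts satisfy $\ell_w(S)\le\ell_w(T)$, and because $\eta$-initial segments are totally ordered by inclusion, $\eta^{-1}(\{1,\dots,\ell_w(S)\})\subseteq\eta^{-1}(\{1,\dots,\ell_w(T)\})$; taking the union over $w$ gives $Comp_{\eta,G\times H}(S)\subseteq Comp_{\eta,G\times H}(T)$. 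The only feature of $\eta$ used here is that it induces a linear order, which is exactly why compression of products is automatically a full (monotone) $StOp$, unlike the extended compressions of Section 4.3.

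For Property 2, I would first partition $E_{G\times H}$ into the edges lying inside a single fiber $G\times\{w\}$ and the ``vertical'' edges $\{(u,w_1),(u,w_2)\}$ with $\{w_1,w_2\}\in E_H$, so that for any $S\subseteq V_{G\times H}$, writing $S_w=\{u\in V_G:(u,w)\in S\}$,
\[
|\Theta(S)| \;=\; \sum_{w\in V_H}|\Theta_G(S_w)| \;+\; \sum_{\{w_1,w_2\}\in E_H}|S_{w_1}\,\triangle\,S_{w_2}| .
\]
Applying $|\Theta_G(S_w)|\ge|\Theta|(G;\ell_w)$ (definition of the isoperimetric profile) and $|S_{w_1}\triangle S_{w_2}|\ge\bigl||S_{w_1}|-|S_{w_2}|\bigr| = |\ell_{w_1}-\ell_{w_2}|$ recovers the lower bound recorded in the Appendix. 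Then I compute the same quantity for $S'=Comp_{\eta,G\times H}(S)$: its $w$-fiber is exactly $S'_w=\eta^{-1}(\{1,\dots,\ell_w\})$, so $|\Theta_G(S'_w)|=|\Theta|(G;\ell_w)$ by the nested-solutions property of $\eta$, while, since the initial segments $\eta^{-1}(\{1,\dots,\ell_{w_1}\})$ and $\eta^{-1}(\{1,\dots,\ell_{w_2}\})$ form a chain, $S'_{w_1}\triangle S'_{w_2}$ has size exactly $|\ell_{w_1}-\ell_{w_2}|$. Summing, $|\Theta(S')|$ equals the lower bound, hence $|\Theta(Comp_{\eta,G\times H}(S))|\le|\Theta(S)|$.

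The main obstacle is not any single inequality but the bookkeeping that makes the displayed decomposition of $|\Theta|$ exact: one must check that every edge of $G\times H$ lies in exactly one of the two families, so that the fiberwise and $H$-edgewise contributions add without overlap or omission. Once that is in hand, the substance of the argument reduces to the single observation that is the whole point of compression --- replacing each fiber by an $\eta$-initial segment simultaneously minimizes the internal boundary of that fiber and nests the fibers into a chain, driving \emph{both} terms of the decomposition down to their a priori minima at once.
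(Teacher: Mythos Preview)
Your argument is correct and follows the same route as the paper's sketch in the Appendix: split the edges of $G\times H$ into fiber-internal edges and cross-fiber edges $\{(v,w_1),(v,w_2)\}$, observe that compression minimizes the contribution from each family separately (the first by the nested-solutions hypothesis on $\eta$, the second because initial segments are nested), and conclude that the lower bound is attained. Your write-up is in fact more complete than the paper's, which explicitly treats only Properties~1 and~2 and leaves the monotonicity (Property~3) unmentioned; your verification of Property~3 via the chain property of initial segments is exactly the missing line.
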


Property 1 is trivial but the crucial Property 2 (of a StOp) is also easy to
verify: $\eta^{-1}\left(  \left\{  1,2,...,\ell_{w}\right\}  \right)
\times\left\{  w\right\}  $ minimizes edges cut within $G\times\left\{
w\right\}  $ by a set of cardinality $\ell_{w}$ and all edges between
$G\times\left\{  w_{1}\right\}  $ and $G\times\left\{  w_{2}\right\}  $ are of
the form $\left\{  \left(  v,w_{1}\right)  ,\left(  v,w_{2}\right)  \right\}
$. The obvious lower bound on the number of such edges cut by any $S$ given
$\ell_{w_{1}},\ell_{w_{2}}$ is$\ \left\vert \ell_{w_{1}}-\ell_{w_{2}%
}\right\vert $, which is achieved by $Comp_{\eta,G\times H}\left(  S\right)  $.

\bigskip

\bigskip
\end{document}